\newtheorem{thm}{Theorem}[section]
\newtheorem{prop}[thm]{Proposition}
\newtheorem{cor}[thm]{Corollary}
\newtheorem{lem}[thm]{Lemma}
\theoremstyle{remark}
\newtheorem{rmk}[thm]{Remark}
\theoremstyle{definition}
\newtheorem{defn}{Definition}[section]
\DeclareMathOperator{\N}{\mathbb{N}}
\DeclareMathOperator{\R}{\mathbb{R}}
\DeclareMathOperator{\cF}{\mathcal{F}}
\DeclareMathOperator{\cA}{\mathcal{A}}
\DeclareMathOperator{\cB}{\mathcal{B}}
\DeclareMathOperator{\cC}{\mathcal{C}}
\DeclareMathOperator{\cM}{\mathcal{M}}
\DeclareMathOperator{\cU}{\mathcal{U}}
\DeclareMathOperator{\fF}{\mathfrak{F}}
\newcommand{\UUU}{\color{black}}
\newcommand{\EEE}{\color{black}}
\newcommand{\RRR}{\color{black}}
\definecolor{MyDarkGreen}{rgb}{0,0.50,0.04}
\newcommand{\GGG}{\color{black}}
\begin{document}
\title[Regularity for double-phase orthotropic functionals]{Gradient regularity for double-phase \\ orthotropic functionals}

\author[S. Almi]{Stefano Almi}
\address{Dipartimento di Matematica e Applicazioni ``R. Caccioppoli",
	Universit\`a degli Studi di Napoli  Federico II,
	Via Cintia, 80126 Napoli, Italy}
\email{stefano.almi@unina.it}

\author[C. Leone]{Chiara Leone}
\address{Dipartimento di Matematica e Applicazioni ``R. Caccioppoli",
	Universit\`a degli Studi di Napoli  Federico II,
	Via Cintia, 80126 Napoli, Italy}
	\email{chiara.leone@unina.it}

\author[G. Manzo]{Gianluigi Manzo}
\address{Dipartimento di Matematica e Applicazioni ``R. Caccioppoli",
	Universit\`a degli Studi di Napoli  Federico II,
	Via Cintia, 80126 Napoli, Italy}
\email{gianluigi.manzo@unina.it}

\keywords{Orthotropic functionals, double phase, Lipschitz regularity}
\subjclass[2020]{35J70, 
                 35B65. 
                 }

\begin{abstract}
We prove higher integrability for local minimizers of the  double-phase orthotropic functional
\begin{displaymath}
      \sum_{i=1}^{n}\int_\Omega\left(\left|u_{x_i}\right|^p+a(x)\left| u_{x_i}\right|^q\right)dx
\end{displaymath}
when the weight function $a \geq0$ is assumed to be $\alpha$-H\"older continuous, while the exponents $p, q$ are such that $2 \leq p \leq q$ and $\frac{q}{p} < 1 + \frac{\alpha}{n}$. Under natural Sobolev regularity of~$a$, we further obtain explicit Lipschitz regularity estimates for local minimizers.
\end{abstract}

\maketitle

\section{Introduction}
The present paper deals with the gradient regularity for local minimizers of the functional
\begin{equation}\label{funct:main}
	\mathcal{F}_0[u,\Omega]=\sum_{i=1}^{n}\int_\Omega\left(\left|u_{x_i}\right|^p+a(x)\left| u_{x_i}\right|^q\right)dx
\end{equation}
for $u\in W^{1,p}(\Omega)$, \GGG $\Omega\subset\mathbb{R}^n$ a bounded open set, $2\le p\le q$, \EEE and $a \colon {\Omega}\to [0,+\infty)$ a suitable continuous function. The energy~\eqref{funct:main} features both an {\em orthotropic structure} and a {\em double phase structure} and couples in a nontrivial way the difficulties coming from the two situations. \EEE
In the simplest case, an orthotropic functional is of the form
\begin{equation}
\label{e:intro1}
	\int_\Omega G(\nabla u) \,dx \qquad \text{with } G(z) := \frac{1}{p} \sum_{i=1}^{n} | z_{i} |^{p}\,.
\end{equation}
This kind of functional first appeared in the framework of Optimal Transport with congestion effects~\cite{MR3296496, MR2651987}. Key feature of~\eqref{e:intro1} is the lack of ellipticity. By a direct computation we indeed get that
\begin{displaymath}
    \left\langle D^{2} G(z) \xi , \xi \right\rangle =  (p-1)  \sum_{i=1}^{n}| z_{i} |^{p-2} | \xi_{i}|^{2} \qquad \text{for every $z, \xi \in \R^{n}$.}
\end{displaymath}
Hence, $G$ presents a degenerate behavior whenever one component of $z$ is zero, which happens on the union of $n$ hyperplanes. For this reason, the regularity techniques for elliptic integrands are not applicable in the orthotropic setting, and the higher regularity for local minimizers of~\eqref{e:intro1} is not fully understood. For example, the $C^{1}$ regularity has been tackled only in dimension $n=2$ in~\cite{MR4239447, MR3749368, MR3886595, MR3973558}. In order to avoid restrictions on the dimension, we focus here on Lipschitz regularity for local minimizers of the orthotropic double-phase functional~\eqref{funct:main}, which has been studied in the pure orthotropic framework of~\eqref{e:intro1} in~\cite{MR4163990, MR3616334, BouBraLeo, bblv, MR725829}. We further refer to~\cite{MR3461298} for an alternative approach based on viscosity solutions and to~\cite{BrLePiVe16, cavaliere1998, MR4927646} for higher differentiability results.

The second feature of the functional $\mathcal{F}_0$ is a double phase structure. The model case of a double phase energy is
\begin{equation}
\label{e:intro2}
	\int_\Omega |\nabla u|^p+a(x)|\nabla u|^q\,dx
\end{equation}
with $1<p<q<\infty$ and $a(x)\ge 0$ almost everywhere. This functional pertains to the class of variational problems with nonstandard growth conditions, following the terminology of Marcellini in~\cite{MR1094446, MR969900}. These energies have been introduced by Zhikov in~\cite{MR681795, MR864171, zhikov, MR1486765} for modeling composite materials characterized by a point-dependent anisotropic behavior, ruled by the weight $a(\cdot)$ which may change  point-by-point the rate of ellipticity of the integrand.  

To obtain meaningful regularity results, suitable assumptions on $p,q$ and $a(\cdot)$ are needed: in particular, $p$ and $q$ should not be too far apart, in order to avoid the Lavrentiev phenomenon~\cite{borowski2022absence, EspLeoMingio, zhikov}, while $a$ should be sufficiently regular. It was first shown in~\cite{colombo2015regularity} that $\nabla u$ is locally H\"older continuous if the weight~$a$ is in $C^{0, \alpha} ({\Omega})$ for some $\alpha \in (0, 1]$ and $\frac{q}{p} < 1 + \frac{\alpha}{n}$. This result has been then improved to include the borderline case in~\cite{MR3775180, MR4331020, MR4397041}. We refer the interested reader to the recent survey~\cite{MR4258810} and references therein concerning recent advances in regularity theory for minimizers of functionals under nonstandard growth.

In the attempt of understanding the role played by non-autonomous coefficients in the regularity theory for orthotropic functionals, it is rather natural to start from the double-phase type of energy introduced in~\eqref{funct:main}.

Our first result is a higher integrability statement, which is valid assuming  H\"older regularity for $a\colon {\Omega}\to[0,+\infty):$
\begin{equation}\label{hpholder}
a\in C^{0,\alpha}(\Omega),\ \ \ \alpha\in(0,1],
\end{equation}
as well as the bound $\frac{q}{p}< 1+\frac{\alpha}{n}$.  Before stating the result, we briefly recall the definition of local minimizers for $\mathcal{F}_0$.

\begin{defn}
A function $u\in W^{1,p}(\Omega)$ is a local minimizer of the functional defined in \eqref{funct:main} if $\mathcal{F}_0[u,\Omega]<+\infty$ and 
$$
\mathcal{F}_0[u,\Omega']\le \mathcal{F}_0[\varphi,\Omega'] \ \ \ \ \hbox{ for every }u-\varphi\in W^{1,p}_0(\Omega') \hbox{ and every }\Omega'\Subset\Omega.
$$
\end{defn}

\begin{thm}[Higher integrability]
\label{revhol}
	Let $\Omega\subset\R^n$ be a bounded open set, the nonnegative function $a(\cdot)$ satisfy \eqref{hpholder}, and $q\ge p\ge 2$ such that $\displaystyle\frac{q}{p}<1+\frac{\alpha}{n}$.  Let $U\in W^{1,p}(\Omega)$ be a local minimizer of \eqref{funct:main}. Then, for every $\tilde{q}<np/(n-2\alpha)$ there exists an exponent $\tilde{b}=\tilde{b}(n,p,q,\alpha)$ and a constant $c=c(n,p,q,a,\tilde{q},\textrm{diam}(\Omega))$ such that for every ball $B_R\Subset \Omega$ it holds 
	\begin{equation}\label{eq:rh2}
		\left(\fint_{B_{R/2}}|\nabla U|_p^{\tilde{q}}\,dx\right)^{1/\tilde{q}}\le c\left(\fint_{B_R}(|\nabla U|_p^p+1)\,dx\right)^{\tilde{b}/p}.
	\end{equation}
	In particular, \eqref{eq:rh2} 
	holds for $\tilde{q}=q$ and $U\in W^{1,q}_{loc}(\Omega)$.
\end{thm}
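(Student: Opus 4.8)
The plan is to adapt the comparison scheme devised for autonomous-coefficient double-phase functionals in~\cite{colombo2015regularity} to the orthotropic structure of $\mathcal{F}_0$, using as a black box the a priori gradient estimates available for orthotropic integrands with \emph{constant} coefficients, and then to close the argument by a self-improving iteration. Throughout, $|\nabla w|^p$ and $|\nabla w|^q$ abbreviate $\sum_i|w_{x_i}|^p$ and $\sum_i|w_{x_i}|^q$. Fix $B_R\Subset\Omega$; after translating and rescaling (this is where $\mathrm{diam}(\Omega)$ enters the constant) it suffices to estimate on a generic sub-ball $B_\rho=B_\rho(x_0)\subset B_R$. On $B_\rho$ I would introduce the frozen comparison map $v\in U+W^{1,p}_0(B_\rho)$ minimizing the constant-coefficient orthotropic energy $\sum_i\int_{B_\rho}(|w_{x_i}|^p+a_0|w_{x_i}|^q)\,dx$, with $a_0:=0$ when $\sup_{B_\rho}a$ is of order $[a]_{0,\alpha}\rho^\alpha$ (the ``$p$-phase'') and $a_0:=\inf_{B_\rho}a$ otherwise (the ``$(p,q)$-phase'', where then $\mathrm{osc}_{B_\rho}a\le[a]_{0,\alpha}\rho^\alpha\lesssim\inf_{B_\rho}a$, so that $a$ is comparable to the constant $a_0$ on $B_\rho$). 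For such $v$ one has, in every dimension and with the constant controlled uniformly in $a_0\ge 0$, the a priori bound $\sup_{B_{\rho/2}}|\nabla v|^p\le c\fint_{B_\rho}(|\nabla v|^p+1)\,dx$: for $a_0=0$ this is the Lipschitz estimate for the orthotropic $p$-functional~\eqref{e:intro1} (cf.\ \cite{MR4163990,MR3616334,BouBraLeo,bblv,MR725829}), and the case $a_0>0$ comes from the same circle of ideas applied to the frozen $(p,q)$-integrand.

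Then I would establish the comparison estimate. Testing the minimality of $U$ against $v$ and that of $v$ against $U$, and using that $p\ge 2$ makes $t\mapsto|t|^p$ uniformly convex --- so that, componentwise, $\sum_i|V_p(U_{x_i})-V_p(v_{x_i})|^2$ with $V_p(t):=|t|^{(p-2)/2}t$ is controlled by the difference of the two energies plus the mismatch of the coefficients --- one obtains
\[
\sum_{i=1}^n\int_{B_\rho}\big|V_p(U_{x_i})-V_p(v_{x_i})\big|^2\,dx\;\lesssim\;\int_{B_\rho}|a(x)-a_0|\,|\nabla U|^q\,dx\;\lesssim\;[a]_{0,\alpha}\,\rho^{\alpha}\int_{B_\rho}\big(|\nabla U|^q+1\big)\,dx,
\]
the choice of $a_0$ being precisely what forces the right-hand side to be small in either phase. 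Since $p\ge2$ also gives $|\xi-\eta|^p\lesssim|V_p(\xi)-V_p(\eta)|^2$, this controls $\fint_{B_\rho}|\nabla U-\nabla v|^p$ as well.

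Next I would combine the comparison estimate with the a priori bound for $v$ and with the energy inequality $\fint_{B_\rho}|\nabla v|^p\lesssim\fint_{B_\rho}(|\nabla U|^p+1)+[a]_{0,\alpha}\rho^\alpha\fint_{B_\rho}(|\nabla U|^q+1)$ (again from the minimality of $v$) to produce, for every $\chi>1$ with $p\chi$ below $np/(n-2\alpha)$, a reverse-H\"older-type inequality of the schematic form
\[
\Big(\fint_{B_{\rho/2}}\big(|\nabla U|^{p\chi}+1\big)\,dx\Big)^{1/\chi}\;\le\;c\,\fint_{B_\rho}\big(|\nabla U|^p+1\big)\,dx\;+\;c\,\rho^{\alpha}\fint_{B_\rho}\big(|\nabla U|^{q}+1\big)\,dx.
\]
The last step is a bootstrap: starting from $\nabla U\in L^p_{loc}$ and feeding the currently available exponent back into the right-hand side --- estimating $\int|\nabla U|^{q}$ by interpolating between $L^p$ and the freshly gained $L^{p\chi}$, and collecting the resulting powers of $\rho$ --- one runs a fast-geometric-convergence / Gehring-type iteration. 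The hypothesis $\tfrac qp<1+\tfrac\alpha n$ is exactly what leaves the error term with a positive net power of $\rho$, hence subcritical, so the iteration closes and strictly improves the exponent at each stage; iterating yields $\nabla U\in L^{\tilde q}_{loc}(\Omega)$ for every $\tilde q<np/(n-2\alpha)$ together with~\eqref{eq:rh2}. Finally $\tfrac qp<1+\tfrac\alpha n<\tfrac{n}{n-2\alpha}$ forces $q<np/(n-2\alpha)$, so one may take $\tilde q=q$, obtaining $U\in W^{1,q}_{loc}(\Omega)$ and~\eqref{eq:rh2} with $\tilde q=q$.

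The main obstacle will be making this last iteration close with the sharp range of exponents: one has to track carefully how the $\rho^{\alpha}$ gain coming from the H\"older continuity of $a$ competes with the loss produced by estimating the supercritical term $\int|\nabla U|^{q}$, and it is precisely the gap bound $q/p<1+\alpha/n$ that makes this balance work and is ultimately responsible for the threshold $np/(n-2\alpha)$ --- the ``$2\alpha$'' reflecting that only fractional (exponent-$\alpha$) regularity of $a$ is being used, rather than a full derivative as in the Sobolev-regular case underlying the companion Lipschitz estimate. A secondary, genuinely orthotropic difficulty is supplying the a priori bound for the frozen functional with a constant independent of $a_0\ge0$ and valid in all dimensions, since the orthotropic integrand degenerates on a union of hyperplanes and the classical De Giorgi--Nash--Moser theory does not apply directly.
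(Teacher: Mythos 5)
Your proposal follows a genuinely different route from the paper (which proves \eqref{eq:rh2} by a difference-quotient argument: regularize the problem, test the Euler--Lagrange equation with $\tau_{-h}(\eta^2\tau_h v^m)$, read the H\"older continuity of $a$ as fractional differentiability so that $W_p(\nabla v^m)$ lands in a Nikol'ski\u{\i} space $\mathcal N^{\alpha,2}$, use the embedding into $L^{2n/(n-2\beta)}$, absorb the resulting $L^{2q-p}$ term by interpolation under $q/p<1+\alpha/n$, and pass to the limit using the absence of the Lavrentiev phenomenon). Unfortunately, as written your comparison scheme has two genuine gaps. First, the ``black box'' you invoke --- a sup-bound $\sup_{B_{\rho/2}}|\nabla v|^p\le c\fint_{B_\rho}(|\nabla v|^p+1)\,dx$ for minimizers of the frozen orthotropic energy $\sum_i\int(|w_{x_i}|^p+a_0|w_{x_i}|^q)\,dx$, valid in every dimension, with $c$ \emph{uniform in} $a_0\ge 0$ and with only the $p$-energy on the right --- does not exist in the literature: the orthotropic references you cite treat the pure $p$-case (or fixed exponents $p_i$), and obtaining such an estimate for the two-exponent orthotropic integrand with a constant independent of the coefficient is essentially of the same order of difficulty as the paper's Lipschitz theorem (Theorem~\ref{thm:main}), whose proof occupies Sections~\ref{s:estimates}--5; it cannot be assumed. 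This is not a ``secondary'' difficulty but the heart of the matter, and it is precisely to avoid it that the paper proves higher integrability by a method that, as noted in the introduction, only uses the Euler--Lagrange equation and is insensitive to the orthotropic degeneracy.

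Second, the bootstrap is circular. Your comparison estimate and the ensuing reverse H\"older inequality carry $\rho^{\alpha}\fint_{B_\rho}(|\nabla U|^{q}+1)\,dx$ on the right-hand side, i.e.\ the \emph{unweighted} $q$-energy; in the $p$-phase ($a_0=0$, $\sup_{B_\rho}a\lesssim[a]_{0,\alpha}\rho^\alpha$) the natural energy only controls $\int a|\nabla U|^q$, which gives no bound on $\int|\nabla U|^q$ there. Hence the first step of the iteration, ``interpolate $\int|\nabla U|^q$ between $L^p$ and the freshly gained $L^{p\chi}$'', cannot start: the $L^{p\chi}$ gain is only available once the right-hand side is known finite, and that already requires $\nabla U\in L^q_{loc}$, which is the statement to be proven. (The same issue appears earlier: to test the minimality of $U$ against $v$ you need $\int_{B_\rho}a|\nabla v|^q<\infty$, which again presupposes integrability of $\nabla v$ beyond $L^p$ up to $\partial B_\rho$.) In Colombo--Mingione this is exactly why the $W^{1,q}_{loc}$ information is established \emph{first}, by the difference-quotient/Nikol'ski\u{\i} argument, and only afterwards fed into the freezing/comparison machinery; the paper follows the same order. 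Finally, even granting the two missing ingredients, a Gehring-type self-improvement yields only a small, non-explicit integrability gain, so it is not clear it would deliver the full range $\tilde q<np/(n-2\alpha)$ with the quantitative form \eqref{eq:rh2}; in the paper that threshold comes directly from the embedding $\mathcal N^{\alpha,2}\hookrightarrow L^{2n/(n-2\beta)}$, $\beta<\alpha$, applied to $W_p(\nabla U)$, combined with the interpolation step that uses $\beta>n(q/p-1)$.
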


\GGG

At this stage, the orthotropic effect is rather mild, as higher integrability is obtained by a refined difference quotient technique (cf.~\cite[Theorem 1.5]{colombo2015regularity}) which only relies on the Euler-Lagrange equation of the functional, and thus is not influenced by the degenerate behavior of~\eqref{funct:main}. The key step is to interpret the H\"older continuity of $a(\cdot)$ in terms of fractional differentiability, which then allows the use of embeddings in Nikol'ski\u{\i} spaces. We refer to Section~\ref{s:higher-integrability} for the details of the proof.

\smallskip
\EEE

To guarantee Lipschitz continuity of local minimizers we are going to assume  that $a(\cdot)$ belongs to a Sobolev function with a suitable high degree of integrability of $\nabla a$, as it was done for instance in~\cite{MR4078712, MR4331020, MR3537963, MR4116617}. More precisely, we consider
\begin{equation}\label{hpsobolev}
 a\ge 0,\ a\in \left\{\begin{array}{ll}
 W^{1,\frac{n}{1-\alpha}}(\Omega) &\hbox{ if } \alpha\in (0,1)\\
 W^{1,\infty}(\Omega) &\hbox{ if } \alpha=1.
 \end{array}
 \right.
\end{equation}
Let us note that if $\Omega$ has Lipschitz boundary the embedding $W^{1,\frac{n}{1-\alpha}}(\Omega)\hookrightarrow C^{0,\alpha}({\overline\Omega})$ holds.

\begin{thm}[Lipschitz regularity]
\label{thm:main}
	Let $\Omega\subset\R^n$ be a bounded open set, the function $a(\cdot)$ satisfy~\eqref{hpsobolev}, and $q\ge p\ge 2$ such that $\displaystyle\frac{q}{p}<1+\frac{\alpha}{n}.$ Then a local minimizer $U\in W^{1,p}(\Omega)$ of \eqref{funct:main} is locally Lipschitz. In particular, there exist a constant $C$ and an exponent $\Theta$ such that for every $B_{R/3}\subset B_R\Subset \Omega$ the estimate
	\begin{equation}
		\|\nabla U\|_{L^\infty(B_{R/3})}\le C \left(\int_{B_R}|\nabla U|^p+1\right)^{\Theta/p}
	\end{equation}
	holds. The constant $C$ depends on $n,p,q,a,R$ while $\Theta$ on $n,p,q,\alpha$.
\end{thm}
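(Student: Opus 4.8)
The plan is to proceed by a two-stage approximation together with a priori estimates that are stable under the regularization. On a fixed ball $B_R\Subset\Omega$, first replace the weight $a$ by a mollification $a_\varepsilon\ge0$ (so that $a_\varepsilon$ is smooth and the relevant norms in~\eqref{hpsobolev} are controlled uniformly in $\varepsilon$), introduce a nondegeneracy parameter $\mu\in(0,1)$ turning each $|t|^p$, $|t|^q$ into $(\mu^2+t^2)^{p/2}$, $(\mu^2+t^2)^{q/2}$, and---if needed to guarantee the standard growth required for classical regularity---add a vanishing uniformly elliptic term $\varepsilon\int_{B_R}(\mu^2+|\nabla u|^2)^{q/2}\,dx$. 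Let $u_{\varepsilon,\mu}\in U+W^{1,q}_0(B_R)$ minimize the resulting functional $\mathcal{F}_{\varepsilon,\mu}$; since its integrand is smooth, strictly convex and (with $\mu>0$, $p\ge2$) uniformly elliptic, standard theory yields $u_{\varepsilon,\mu}\in C^\infty(B_R)$, so all subsequent computations are licit. The point is then to bound $\|\nabla u_{\varepsilon,\mu}\|_{L^\infty}$ on interior balls by the right-hand side of the stated estimate, with constants independent of $\varepsilon$ and $\mu$.

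For the a priori bound I would differentiate the Euler--Lagrange equation $\sum_i\partial_{x_i}(g_{\varepsilon,i}'(u_{x_i}))=0$ (with $g_{\varepsilon,i}(t)=(\mu^2+t^2)^{p/2}+a_\varepsilon(\mu^2+t^2)^{q/2}$, plus the $\varepsilon$-term) in a direction $x_s$, obtaining an equation for $v=u_{x_s}$ that, besides the ``diagonal'' elliptic part $\sum_i(g_{\varepsilon,i}''(u_{x_i})v_{x_i})_{x_i}$, carries the non-autonomous contribution coming from $\partial_{x_s}a_\varepsilon$. Testing with $v\,\Phi(|\nabla u|^2)\eta^2$ for an increasing $\Phi$ and a cutoff $\eta$, then summing over $s$, yields a family of Caccioppoli inequalities controlling the weighted second-order quantities $\sum_{i}(|u_{x_i}|^{p-2}+a_\varepsilon|u_{x_i}|^{q-2})\,\Phi\,|u_{x_i x_s}|^2\eta^2$ by lower-order terms. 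The only dangerous terms are those involving $\nabla a_\varepsilon$, of the form $\int a_{\varepsilon,x_s}|u_{x_i}|^{q-2}u_{x_i}\,(v\Phi\eta^2)_{x_i}$; these I would split by Young's inequality---absorbing the part that reconstructs the good weighted Hessian term---and estimate the remainder by Hölder's inequality, using $\nabla a_\varepsilon\in L^{n/(1-\alpha)}$ (or $L^\infty$ when $\alpha=1$) together with the higher integrability $\nabla u_{\varepsilon,\mu}\in L^{\tilde q}_{loc}$ furnished \emph{uniformly in $\varepsilon,\mu$} by Theorem~\ref{revhol} applied to the approximating problems. The condition $\frac{q}{p}<1+\frac{\alpha}{n}$ is exactly what makes the exponents in this last step balance.

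Granted these Caccioppoli estimates, I would close the argument by a Moser-type iteration on a chain of balls shrinking from $B_{R/2}$ to $B_{R/3}$: the Sobolev inequality turns the $L^2$ control of the weighted second derivatives into an $L^{2^*}$ gain on suitable powers of $|\nabla u_{\varepsilon,\mu}|$, and the surplus integrability from Theorem~\ref{revhol} provides the margin needed to make the iteration self-improving in spite of the orthotropic degeneracy, producing $\|\nabla u_{\varepsilon,\mu}\|_{L^\infty(B_{R/3})}\le C(\int_{B_R}|\nabla u_{\varepsilon,\mu}|^p+1)^{\Theta/p}$ with $C=C(n,p,q,a,R)$ and $\Theta=\Theta(n,p,q,\alpha)$. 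Finally, one passes to the limit $\varepsilon,\mu\to0$: the energies $\mathcal{F}_{\varepsilon,\mu}[u_{\varepsilon,\mu};B_R]$ stay bounded, $\mathcal{F}_{\varepsilon,\mu}\to\mathcal{F}_0$, and by lower semicontinuity together with minimality (and convexity/uniqueness for the Dirichlet problem on $B_R$) the weak limit of $u_{\varepsilon,\mu}$ is the local minimizer $U$, so the uniform gradient bound survives in the limit. I expect the main obstacle to be precisely the interplay, inside the a priori estimates, between the orthotropic structure---which provides only component-by-component, ``diagonal'' second-order information rather than full Hessian control---and the $x$-dependence, whose $\nabla a$ error terms must be reabsorbed using nothing more than the higher integrability already at hand; tracking the exponents so that the Moser iteration remains self-improving under the sole hypothesis $\frac{q}{p}<1+\frac{\alpha}{n}$ is the technical core of the proof.
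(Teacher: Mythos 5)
There is a genuine gap at the technical core of your plan, namely the a priori gradient estimate. You propose to differentiate the equation and test with $u_{x_s}\,\Phi(|\nabla u|^2)\,\eta^2$, summing over $s$ — the classical Uhlenbeck/Moser scheme with powers of the full gradient. The algebra of that Caccioppoli inequality is not the problem (the cross terms are signed by convexity); the problem is that it cannot be iterated in the orthotropic setting. The good term it produces is $\sum_{i,s}\int |u_{x_i}|^{p-2}\,|\nabla u|^{2\sigma}\,|u_{x_ix_s}|^2\eta^2$, whose weight $|u_{x_i}|^{p-2}$ does \emph{not} control $|\nabla u|^{p-2}$ on the degeneracy hyperplanes; hence the Sobolev step only applies to the componentwise quantities $|u_{x_k}|^{\sigma+p/2}$, while the right-hand side of your Caccioppoli inequality inevitably carries $|\nabla u|^{2\sigma+2}$ (from $\Phi(|\nabla u|^2)$ and from $\sum_s u_{x_s}^2$ in the $\nabla\eta$-terms). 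Converting $|\nabla u|^{2\sigma+2}$ into powers of single components (or of $\,\mathcal U=\max_k|u_{x_k}|$) costs a factor of order $n^{\sigma}$, and in the Moser iteration $\sigma_j\sim 2^j$ grows geometrically, so the per-step constants contribute $\sum_j \sigma_j/\gamma_j \sim \sum_j 1 = \infty$ to the logarithm of the final constant: the iteration does not close. This is precisely why the paper does not use full-gradient test functions but instead the componentwise machinery of Propositions~\ref{prop:cac} and~\ref{p:2.2}, Corollary~\ref{c:powers} and the finite ``ladder'' of Proposition~\ref{p:iteration}, which keeps every high power on a single component $u_{x_j}$ or $u_{x_k}$ so that all constants grow only polynomially in $\sigma$, yielding \eqref{eq:estsigma} and then \eqref{e:U1} with constants compatible with the infinite iteration of Proposition~\ref{prop:lip}. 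Your closing remark correctly identifies the ``diagonal-only'' Hessian information as the obstacle, but the scheme you actually propose is the one this obstacle defeats.

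Two further points are glossed over. First, you invoke Theorem~\ref{revhol} ``applied to the approximating problems, uniformly in $\varepsilon,\mu$'': that theorem is stated for local minimizers of $\cF_0$ with H\"older weight, and a uniform higher-integrability statement for your regularized minimizers would need a separate proof; the paper's Lipschitz argument deliberately avoids it, handling the $\nabla a$ terms by H\"older's inequality with $\nabla a\in L^{n/(1-\alpha)}$ (the exponent $h=\frac{n}{n-2(1-\alpha)}$ in \eqref{eq:hadh}) and removing the excess integrability at the end by the reverse H\"older + interpolation + Lemma~\ref{lem:est} argument in Proposition~\ref{prop:lip}, where $\frac{q}{p}<1+\frac{\alpha}{n}$ enters through \eqref{graziehp} and \eqref{e:M<1}. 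Second, your approximation minimizes in $U+W^{1,q}_0(B_R)$ with an added $\varepsilon\int(\mu^2+|\nabla u|^2)^{q/2}$ term: unless you first prove $U\in W^{1,q}(B_R)$, every competitor has infinite regularized energy, and in any case the convergence of the approximate minimizers to $U$ (with convergence of energies) is exactly the Lavrentiev-type issue that the paper settles via mollified boundary data $U^\varepsilon$, Lemma~\ref{lavr} and Proposition~\ref{prop:approx}; asserting ``$\cF_{\varepsilon,\mu}\to\cF_0$ and lower semicontinuity'' does not substitute for that step.
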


 As usual when dealing with higher order regularity, the initial difficulty lies in the fact that the minimizer $U$ lacks the smoothness required to carry out all necessary computations directly. To overcome this issue, in Section~\ref{s:approximation} we approximate the local minimizer $U$ by solutions~$u_\varepsilon$ of uniformly elliptic problems. We will prove in a direct way the absence of Lavrentiev phenomenon implied by $\frac{q}{p}< 1+\frac{\alpha}{n}$. This ensures that the approximating solutions $u_\varepsilon$ are as smooth as needed and converge to the original minimizer $U$. Therefore, it suffices to establish suitable a priori estimates for $u_\varepsilon$ that remain stable as $\varepsilon$ goes to $0$.

The proof strategy builds upon the following steps: {\em (i)} differentiation of the Euler-Lagrange equation associated with the approximating problems; {\em (ii)} derivation of Caccioppoli-type inequalities for convex powers of the gradient components (see Section~\ref{s:caccioppoli}); {\em (iii)} construction of an iterative scheme based on reverse H\"older inequalities (cf.~Propositions~\ref{prop:cac} and~\ref{p:2.2} and Corollary~\ref{c:powers}); and finally {\em (iv)} iteration to obtain the desired local $L^\infty$ estimate on $\nabla u_\varepsilon$ (see Proposition~\ref{p:iteration}). Despite appearing to be a rather standard work plan in regularity theory, the implementation of the above steps is quite delicate due to the degeneracy of the equation under consideration. In fact, we adopt the same general approach as in \cite[Theorem 1.1]{bblv}, but the adaptation is far from straightforward.
Our method ultimately yields an $L^\infty-L^m$ estimate for $\nabla u_\varepsilon$, achieved after an infinite iteration process. One must carefully address a subtle issue characteristic of nonstandard growth problems: the exponent $m$ in this preliminary estimate may turn out to be excessively large. Fortunately, this estimate can be “corrected” using again the reverse H\"older inequality together with an interpolation argument that effectively reduces the initial integrability requirement on $\nabla u_\varepsilon$. This last step is contained in Proposition~\ref{prop:lip}.

\medskip

\noindent{\bf Outlook.} In this work we have considered the restrictions $2 \leq p \leq q$ and $\frac{q}{p} < 1 + \frac{\alpha}{n}$. Future investigations will focus on the full range of exponents $1 < p \leq q$. Let us point out that in the pure orthotropic framework, Lipschitz regularity with subquadratic growth was considered in~\cite{BouBraLeo}. A further issue is the borderline case $\frac{q}{p} = 1 + \frac{\alpha}{n}$ (see~\cite{MR3775180, MR4331020} in the double phase setting), where we lack the embedding in Nikol'ski\u{\i} spaces and the final interpolation argument. Finally, in dimension $n=2$ it would be of interest to extend the $C^{1}$-regularity results of~\cite{MR4239447, MR3749368, MR3886595, MR3973558} to the present setting.

\EEE
\section{Preliminaries}

\GGG In this Section we fix a bounded open set $\Omega\subset \R^n$, $n\ge 2$. For $1\le\gamma<\infty$ and $z\in \R^n$ denote $|z|_\gamma=\left(\sum_{i=1}^{k}|z_i|^\gamma\right)^{1/\gamma}$ and \EEE consider the functional
\begin{equation}\label{eq:main}
	\UUU \cF_{0} \EEE [u;\omega]=\int_\omega \big( |\nabla u(x)|_p^p+a(x)|\nabla u(x)|_q^q \big) \,dx,
\end{equation}
for $u\in W^{1,p}(\Omega)$ and $\omega\subseteq\Omega$ open. We also denote $\cF_{0}[u]=\cF_{0}[u,\Omega].$

The inequality
\begin{equation}\label{eq:pequiv}
	\sum_{i=1}^{m}|x_i|^\gamma\le \left(\sum_{i=1}^m|x_i|\right)^\gamma\le m^{\gamma-1} \sum_{i=1}^{m}|x_i|^\gamma\quad\forall x_1,\dots,x_m\in \mathbb{R}^k
\end{equation}
holds for $\gamma\ge 1$ and $k,m\in\N$. 

\UUU Given two functions $f, g \colon \R^{n} \to \R$, we write $f \approx g$ if there exists $c \geq 1$ such that $c^{-1} \, g(x) \leq f(x) \leq c\, g(x)$ for every $x \in \R^{n}$. \GGG Similarly the symbol $\lesssim$ stands for $\le$ up to a constant.

\EEE For $\gamma > 1$, $z \in \R^{n}$, and $i = 1, \ldots, n$ we define $W_\gamma^i(z) : =|z_i|^{(\gamma-2)/2}z_i$ and $W_\gamma(z) : =(W_\gamma^1(z),\dots,W_\gamma^i(z),\dots,W_\gamma^n(z))$. The following inequalities hold:
\begin{align}\label{eq:vp}
	& |W_\gamma(z)-W_\gamma(z')|^2\le c\sum_{i=1}^n \left(|z_i|^{\gamma-2}z_i-|z'_i|^{\gamma-2}z'_i\right)(z_i-z'_i),\\
    \label{eq:vp2}
	&|W_\gamma(z)-W_\gamma(z')|\approx \sum_{i=1}^n(|z_i|+|z'_i|)^{(\gamma-2)/2}|z_i-z'_i|\,,
\end{align}
\UUU for a constant~$c>0$ only depending on~$n$ and on~$\gamma$. In particular,~\eqref{eq:vp2} holds componentwise (cf.~\cite{giusti}): \EEE
\begin{equation}\label{eq:vp3}
	|W^i_\gamma(z)-W^i_\gamma(z')|\approx (|z_i|+|z'_i|)^{(\gamma-2)/2}|z_i-z'_i|.
\end{equation}
\UUU We also introduce the function $V_{\gamma} \colon \R^{n} \to \R^{n}$ as $V_\gamma(z)=|z|^{(\gamma-2)/2}z$. Then, $V_{\gamma}$ satisfies the relations
\begin{align}
\label{eq:vp4}
	& |V_\gamma(z)-V_\gamma(z')|^2\le c\langle |z|^{\gamma-2}z-|z'|^{\gamma-2}z',z-z'\rangle\,,\\
    \label{eq:vp5}
	& |V_\gamma(z)-V_\gamma(z')|\approx(|z|+|z'|)^{(\gamma-2)/2}|z-z'|\,,
\end{align}
for every $z, z' \in \R^{n}$, for a positive constant~$c$ only depending on~$n$ and on~$\gamma$. \EEE

\GGG  
In what follows $c$ or $C$ will be often general positive constants, possibly varying from line to line, but depending on only the structural parameters
$n,p,q$, on the function $a$ in terms of $\alpha, [a]_{0,\alpha}, \|a\|_{L^\infty(\Omega)},\|\nabla a\|_{W^{1,\frac{n}{1-\alpha}}(\Omega)}$ and  on the energy controlled by the coercivity of the functional, that is $\|\nabla u\|_{L^p(\Omega)}$. The dependences on some of $\alpha, [a]_{0,\alpha}, \|a\|_{L^\infty(\Omega)}, \|\nabla a\|_{W^{1,\frac{n}{1-\alpha}}(\Omega)}$ or all of these, and on $\|\nabla u\|_{L^p(\Omega)}$ will be simply denoted with $a,$ and $u$, respectively. 

\subsection{Auxiliary lemmas}
\GGG The following technical result is classical in the Regularity Theory. We state it here for the reader's convenience (see \cite[Lemma 6.1]{giusti}). \EEE
\begin{lem}\label{lem:est}
	Let $0<r\le R<1$ and let $Z\colon y\in [r,R]\mapsto Z(y)\in[0,+\infty)$ be a bounded function. Assume that there exist $\cA,\cB,\cC\ge 0$, $\alpha_0\ge\beta_0>0$ and $\theta\in[0,1)$ such that
	\begin{equation*}
		Z(s)\le\frac{\cA}{(t-s)^{\alpha_0}}+\frac{\cB}{(t-s)^{\beta_0}}+ \cC + \theta Z(t)
	\end{equation*}
	holds for all $s,t$ such that $r\le s<t\le R$. Then, for any $\lambda\in\left(\theta^{\frac{1}{\alpha_0}},1\right)$ it holds
	\begin{equation*}
		Z(r)\le\frac{1}{(1-\lambda)^{\alpha_0}}\frac{\lambda^{\alpha_0}}{\lambda^{\alpha_0}-\theta}\left(\frac{\cA}{(R-r)^{\alpha_0}}+\frac{\cB}{(R-r)^{\beta_0}}+\cC\right).
	\end{equation*}
\end{lem}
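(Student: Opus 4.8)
The plan is to run the classical hole-filling iteration on a sequence of radii between $r$ and $R$ that shrink geometrically towards $R$. Fix $\lambda\in(\theta^{1/\alpha_0},1)$ and set $\rho_0:=r$ and, inductively, $\rho_{i+1}:=\rho_i+(1-\lambda)\lambda^i(R-r)$. Summing the geometric progression one finds $\rho_i=r+(1-\lambda^i)(R-r)$, so that $r\le\rho_i<\rho_{i+1}<R$ for every $i\in\N$ and $\rho_i\nearrow R$; in particular each pair $s=\rho_i$, $t=\rho_{i+1}$ is admissible in the hypothesis, with $t-s=(1-\lambda)\lambda^i(R-r)$.

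Introducing the shorthand $\Phi(\delta):=\cA\delta^{-\alpha_0}+\cB\delta^{-\beta_0}+\cC$, the assumption reads $Z(\rho_i)\le\Phi\big((1-\lambda)\lambda^i(R-r)\big)+\theta Z(\rho_{i+1})$ for every $i$. Iterating this estimate $k$ times I would obtain
\[
	Z(r)\le\sum_{i=0}^{k-1}\theta^i\,\Phi\big((1-\lambda)\lambda^i(R-r)\big)+\theta^k Z(\rho_k).
\]
Since $Z$ is bounded on $[r,R]$ and $\theta\in[0,1)$, the remainder $\theta^k Z(\rho_k)$ vanishes as $k\to\infty$, and one is left to sum the series on the right-hand side.

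The series splits according to the three terms of $\Phi$, each becoming geometric once the powers of $(1-\lambda)$ and of $R-r$ are factored out. The $\cA$-part has ratio $\theta\lambda^{-\alpha_0}<1$ — which is exactly the restriction $\lambda>\theta^{1/\alpha_0}$ — and sums to $\lambda^{\alpha_0}/(\lambda^{\alpha_0}-\theta)$, producing the term $(1-\lambda)^{-\alpha_0}\,\lambda^{\alpha_0}(\lambda^{\alpha_0}-\theta)^{-1}\cA(R-r)^{-\alpha_0}$. For the $\cB$-part the ratio is $\theta\lambda^{-\beta_0}$, again $<1$ because $\beta_0\le\alpha_0$ and $\lambda\in(0,1)$ force $\lambda^{-\beta_0}\le\lambda^{-\alpha_0}$; since $x\mapsto x/(x-\theta)$ is decreasing, its sum $\lambda^{\beta_0}/(\lambda^{\beta_0}-\theta)$ is bounded by $\lambda^{\alpha_0}/(\lambda^{\alpha_0}-\theta)$, and likewise $(1-\lambda)^{-\beta_0}\le(1-\lambda)^{-\alpha_0}$, while the factor $(R-r)^{-\beta_0}$ is kept as it stands. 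Finally the $\cC$-part sums to $\cC/(1-\theta)$, which is $\le\cC\,\lambda^{\alpha_0}(\lambda^{\alpha_0}-\theta)^{-1}$ since $\lambda^{\alpha_0}\le1$, and a further harmless factor $(1-\lambda)^{-\alpha_0}\ge1$ may be inserted. Collecting the three contributions yields precisely the asserted bound on $Z(r)$.

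The argument is elementary; the only care needed is the bookkeeping that lets the two distinct exponents $\alpha_0$ and $\beta_0$ collapse into the single constant appearing in the statement — this uses $0<1-\lambda<1$, $0<\lambda<1$ and $\beta_0\le\alpha_0$ at each occurrence — together with the observation that the boundedness of $Z$ is exactly what is required in order to discard the tail $\theta^k Z(\rho_k)$. There is no genuine obstacle: the lemma is a standard packaging of the iteration scheme, and I would simply carry out the computation keeping all constants explicit.
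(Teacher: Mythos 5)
Your argument is correct, and it is essentially the classical proof of this lemma (Giusti, Lemma 6.1), which the paper itself does not reproduce but simply cites: the geometric sequence of radii $\rho_i=r+(1-\lambda^i)(R-r)$, the iteration with remainder $\theta^k Z(\rho_k)$ killed by the boundedness of $Z$, and the summation of the three geometric series, with the $\cB$- and $\cC$-terms absorbed into the single constant via $\beta_0\le\alpha_0$, $0<\lambda<1$, $0<1-\lambda<1$. No gaps; the bookkeeping you describe yields exactly the stated bound.
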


\subsection{Approximation}\label{s:approximation}
We will use an approximation scheme. 
Let $U \in W^{1, p} (\Omega)$ be a local minimizer of~$\cF_{0}$. 
\GGG We fix a ball 
\[
B_R \Subset \Omega\quad \mbox{ such that }\quad 2B_R\Subset\Omega \mbox{ as well}.
\] 
Here by $\lambda\,B_R$ we denote the ball concentric with $B_R$, scaled by a factor $\lambda>0$. 
We set
\[
\varepsilon_0=\min\left\{1, \frac{R}{2}\right\}>0.
\]
For every $0<\varepsilon\le \varepsilon_0$, we denote by $\varrho_\varepsilon$ a standard mollifier, supported in a ball of radius $\varepsilon$ centered at the origin. For every \(x\in \overline{B}_R\), we then define 
\[
U^\varepsilon(x)=U\ast \varrho_\varepsilon(x).
\] 
We notice that
\begin{equation}\label{eq:C1}
\GGG |\nabla U^\varepsilon|(x)\le \|\nabla U\|_{L^p(\Omega)}C\varepsilon^{-\frac{n}{p}}=C_1\varepsilon^{-\frac{n}{p}}.
\end{equation}
\GGG

For $ v \in W^{1, p} (\Omega)$, and $\omega \subseteq \Omega$ we further define the approximate functional
\begin{equation}\label{eq:appr}
	\cF_\varepsilon[v,\omega]: =\cF_{0}[v,\omega]+\frac{\varepsilon}{1+\|\nabla U^\varepsilon\|^q_{L^q(B_R)}}\int_\omega  |\nabla v(x) |^{q}  \,dx.
\end{equation}

In the sequel we shall use the following lemma based on some arguments due to Zhikov \cite{zhikov} (see also \cite[Lemma 13]{EspLeoMingio}).

\begin{lem}\label{lavr}
Let $a\colon \Omega\to[0,+\infty)$ be a function satisfying \eqref{hpholder} and let $1<p\le q<+\infty$,  be such that $\frac{q}{p}\le 1+\frac{\alpha}{n}$. Then 
\begin{equation}
\lim_{\varepsilon\to 0}\cF_{0}[U^\varepsilon,B_R]=\cF_{0}[U,B_R].
\end{equation}
\end{lem}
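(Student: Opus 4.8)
\textbf{Proof proposal for Lemma~\ref{lavr}.}

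The plan is to prove the two inequalities
\[
\limsup_{\varepsilon\to 0}\cF_{0}[U^\varepsilon,B_R]\le \cF_{0}[U,B_R]
\qquad\text{and}\qquad
\cF_{0}[U,B_R]\le \liminf_{\varepsilon\to 0}\cF_{0}[U^\varepsilon,B_R].
\]
The second one is the easy direction: since $\nabla U^\varepsilon\to\nabla U$ in $L^p_{loc}$ and (up to a subsequence) a.e., and the integrand $(z,x)\mapsto |z|_p^p+a(x)|z|_q^q$ is nonnegative, Fatou's lemma gives it at once. So the whole content of the lemma is the $\limsup$ bound, i.e.\ the absence of the Lavrentiev phenomenon on the convex-mollified sequence.

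For the $\limsup$ bound I would follow the Zhikov-type argument of \cite{zhikov}, \cite[Lemma 13]{EspLeoMingio}. The $|\nabla U^\varepsilon|_p^p$ term is unproblematic: $\nabla U^\varepsilon\to\nabla U$ strongly in $L^p(B_R)$ by standard properties of mollification (recall $2B_R\Subset\Omega$, so $U^\varepsilon$ is well defined on $\overline{B}_R$ and $U\in W^{1,p}(2B_R)$), hence $\int_{B_R}|\nabla U^\varepsilon|_p^p\to\int_{B_R}|\nabla U|_p^p$. The delicate term is $\int_{B_R}a(x)|\nabla U^\varepsilon|_q^q\,dx$. The key point is that by \eqref{eq:pequiv} it suffices to control $\sum_i\int_{B_R}a(x)|U^\varepsilon_{x_i}|^q\,dx$, and for each component one writes, using that $U^\varepsilon_{x_i}=U_{x_i}\ast\varrho_\varepsilon$ and Jensen's inequality,
\[
|U^\varepsilon_{x_i}(x)|^q=\Big|\int U_{x_i}(x-y)\varrho_\varepsilon(y)\,dy\Big|^q\le \int |U_{x_i}(x-y)|^q\varrho_\varepsilon(y)\,dy .
\]
Multiplying by $a(x)$, integrating over $B_R$, and using Fubini, one is led to estimate $a(x)$ against $a(x-y)$ for $|y|\le\varepsilon$. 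Writing $a(x)=a(x-y)+\big(a(x)-a(x-y)\big)$ and using the $\alpha$-Hölder continuity $|a(x)-a(x-y)|\le[a]_{0,\alpha}\varepsilon^\alpha$, the first piece produces (after undoing Fubini) a term converging to $\int_{B_R}a|\nabla U|_q^q\,dx$ plus errors, while the second piece produces
\[
[a]_{0,\alpha}\,\varepsilon^\alpha\int_{B_{R}}(|\nabla U|_q^q\ast\varrho_\varepsilon)\,dx
\le C\,\varepsilon^\alpha\int_{2B_R}|\nabla U|_q^q\,dx,
\]
which \emph{a priori} we cannot bound since we only know $U\in W^{1,p}$. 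This is exactly the obstruction the condition $\tfrac{q}{p}\le 1+\tfrac{\alpha}{n}$ is designed to kill, and handling it is the main obstacle of the proof.

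To get past it one replaces the crude bound $|\nabla U|_q^q\ast\varrho_\varepsilon\le C\varepsilon^{-n}\|\nabla U\|_{L^q}^q$ (which is false, $\nabla U\notin L^q$) by the correct quantitative bound coming from mollification of an $L^p$ function: for each component, by Young's inequality $\|U_{x_i}\ast\varrho_\varepsilon\|_{L^q(B_R)}\le \|\varrho_\varepsilon\|_{L^r}\|U_{x_i}\|_{L^p(2B_R)}$ with $\tfrac1q=\tfrac1p+\tfrac1r-1$, giving $\|\nabla U^\varepsilon\|_{L^q(B_R)}^q\le C\,\varepsilon^{-n(1-p/q)}\,\|\nabla U\|_{L^p(2B_R)}^{q}$ (one uses \eqref{eq:C1}-type scaling, or directly $\|\varrho_\varepsilon\|_{L^r}\approx \varepsilon^{-n/r'}=\varepsilon^{-n(1-p/q)\cdot\frac1{\,}}$ after the arithmetic). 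Hence the bad error term is controlled by
\[
[a]_{0,\alpha}\,\varepsilon^{\alpha}\cdot C\,\varepsilon^{-n(1-p/q)}\,\|\nabla U\|_{L^p(2B_R)}^{q}
= C\,[a]_{0,\alpha}\,\|\nabla U\|_{L^p(2B_R)}^{q}\;\varepsilon^{\,\alpha-n\frac{q-p}{q}},
\]
and the exponent is nonnegative precisely because $\tfrac{q}{p}\le 1+\tfrac\alpha n$ implies $q-p\le \tfrac{\alpha p}{n}\le\tfrac{\alpha q}{n}$, i.e.\ $n\tfrac{q-p}{q}\le\alpha$. Thus the error is bounded (in fact $o(1)$ when the inequality is strict, and $O(1)$ in the borderline case — but then one refines by splitting $\int_{2B_R}|\nabla U|_q^q$ over the set $\{|\nabla U|\le\lambda\}$ and its complement, letting first $\varepsilon\to0$ and then $\lambda\to\infty$, as in \cite[Lemma 13]{EspLeoMingio}, using $\int a|\nabla U|_q^q<\infty$). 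Combining the strong $L^p$ convergence of the $p$-part, the leading term of the $q$-part, and the vanishing of the error term yields $\limsup_{\varepsilon\to0}\cF_{0}[U^\varepsilon,B_R]\le\cF_{0}[U,B_R]$, which together with the Fatou lower bound completes the proof.
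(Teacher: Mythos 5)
Your overall strategy (Fatou for the lower bound, mollification scaling plus the H\"older continuity of $a$ for the upper bound) is in the same spirit as the paper's Zhikov-type argument, but as written the upper bound has a genuine gap. After applying Jensen componentwise you split $a(x)=a(x-y)+\big(a(x)-a(x-y)\big)$, which produces the error term $[a]_{0,\alpha}\,\varepsilon^{\alpha}\int_{B_R}\big(|\nabla U|_q^q\ast\varrho_\varepsilon\big)\,dx\le [a]_{0,\alpha}\,\varepsilon^{\alpha}\int_{2B_R}|\nabla U|_q^q\,dx$; this quantity may be $+\infty$ for every $\varepsilon$, since only $a|\nabla U|_q^q\in L^1$ is known and $a$ may vanish. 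Your proposed repair via Young's convolution inequality does not apply to this term: Young controls $\int_{B_R}|U_{x_i}\ast\varrho_\varepsilon|^q\,dx$ (mollify, then take the $q$-th power), whereas your error term is $\int_{B_R}\big(|U_{x_i}|^q\ast\varrho_\varepsilon\big)\,dx$ (power first, then mollify), and Jensen only bounds the former by the latter --- the wrong direction. To make your scheme coherent you must split $a$ \emph{before} using Jensen, e.g.\ bound $a(x)\le a_\varepsilon(x)+[a]_{0,\alpha}\varepsilon^{\alpha}$ with $a_\varepsilon(x)=\inf\{a(y):|x-y|\le\varepsilon\}$, apply Jensen only to the $a_\varepsilon$-piece (using $a_\varepsilon(x)\le a(x-y)$ inside the convolution), and apply Young --- with the correct exponent $\|\nabla U^\varepsilon\|_{L^q(B_R)}^q\lesssim \varepsilon^{-n(q-p)/p}\|\nabla U\|_{L^p(2B_R)}^q$, not $\varepsilon^{-n(q-p)/q}$ --- to the $\varepsilon^{\alpha}$-piece. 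This is close to the paper's proof, except that the paper absorbs the excess \emph{pointwise}: using the bound $|\nabla U^\varepsilon|\le C_1\varepsilon^{-n/p}$ of \eqref{eq:C1}, it shows $F(x,z)\le\tfrac1\delta F_\varepsilon(x,z)$ on this range, applies Jensen to the full convex integrand $F_\varepsilon(x,\cdot)$, and concludes by a generalized dominated convergence theorem.

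A second issue is the borderline case $\tfrac qp=1+\tfrac{\alpha}{n}$, which the lemma covers. With the corrected Young exponent your error term is only $O(1)$, not $o(1)$, and the truncation you suggest (splitting $\int_{2B_R}|\nabla U|_q^q$ over $\{|\nabla U|\le\lambda\}$ and its complement) does not close it: on the complement you would need $\int_{\{|\nabla U|>\lambda\}}|\nabla U|_q^q$ to be finite and small for large $\lambda$, which is not available since $a$ may vanish there. The paper's multiplicative absorption, $F_\varepsilon(x,z)\ge\delta F(x,z)+(1-\delta-\delta[a]_{0,\alpha}C_1^{q-p})|z|_p^p$ for $\delta$ small, handles the equality case seamlessly because the $O(1)$ factor $\varepsilon^{\alpha-\frac np(q-p)}$ is dominated by the coercive $|z|_p^p$ term before integration, so no smallness in $\varepsilon$ is required. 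Note finally that with your (incorrect) exponent $n\tfrac{q-p}{q}$ the borderline difficulty would be invisible, which is a symptom that the computation needs the fix above.
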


\begin{proof}
Define 
\begin{equation*}
\begin{split}
&F(x,z)=|\xi|^p_p+a(x)|z|^q_q,\\
&a_\varepsilon(x)=\inf\{a(y) : y\in B_{R+\varepsilon}, |x-y|\le\varepsilon\},\\
&F_\varepsilon(x,z)=|\xi|^p_p+a_\varepsilon(x)|z|^q_q.
\end{split}
\end{equation*}
Then, by the H\"older continuity of $a(x)$, $a(x)\ge a_\varepsilon(x)\ge a(x)-\varepsilon^\alpha[a]_{0,\alpha}.$ Moreover, by the definition of $a_\varepsilon$ we easily deduce that
\begin{equation}\label{FepsF}
F_\varepsilon(x,z)\le F(y,z) \ \ \forall x,y\in B_{R+\varepsilon}, |x-y|\le \varepsilon.
\end{equation}
On the other hand, given $\delta\in (0,1)$, if $|z|\le C_1\varepsilon^{-\frac{n}{p}}$, we have
\begin{equation*}
\begin{split}
F_\varepsilon(x,z)&=\delta F(x,z)+\delta(a_\varepsilon(x)-a(x))|z|_q^q+(1-\delta)F_\varepsilon(x,z)\\
&\ge \delta F(x,z)-\delta \varepsilon^\alpha[a]_{0,\alpha}|z|_q^q+(1-\delta)F_\varepsilon(x,z)\\
&\ge \delta F(x,z)-\delta \varepsilon^\alpha[a]_{0,\alpha}C_1^{q-p}\varepsilon^{-\frac{n}{p}(q-p)}|z|_p^p+(1-\delta)|z|_p^p\\
&\ge \delta F(x,z)+(1-\delta-\delta[a]_{0,\alpha}C_1^{q-p})|z|_p^p.\\
\end{split}
\end{equation*}
Let us remark that the last estimate relies on the fact that $\frac{q}{p}\le 1+\frac{\alpha}{n}.$ Therefore, choosing $\delta$ small enough, we infer that 
\begin{equation}\label{FFeps}
F(x,z)\le \frac{1}{\delta}F_\varepsilon (x,z) \ \ \ \hbox{ if }|z|\le C_1\varepsilon^{-\frac{n}{p}}.
\end{equation}
The convexity of the function $z\mapsto F_\varepsilon(\cdot,z)$ allows to apply Jensen's inequality in connection with the fact that $U^\varepsilon$ is defined by a convolution. This gives, together with \eqref{FepsF},
\begin{equation}
\begin{split}
F_\varepsilon(x,\nabla U^\varepsilon(x))&\le\int_{B_{R+\varepsilon}}F_\varepsilon(x,\nabla U(y))\rho_\varepsilon(x-y)\,dy\\
&\le \int_{B_{R+\varepsilon}}F(y,\nabla U(y))\rho_\varepsilon(x-y)\,dy=F(\cdot,\nabla  U)\ast\rho_\varepsilon (x). \\
\end{split}
\end{equation}
Then using \eqref{eq:C1} and \eqref{FFeps}
$$
F(x,\nabla U^\varepsilon(x))\le \frac{1}{\delta}F(\cdot,\nabla  U)\ast\rho_\varepsilon (x).
$$
Recalling that $F(\cdot,\nabla  U)\ast\rho_\varepsilon (x)\to  F(x,\nabla U(x))$ strongly in $L^1(B_R)$, the assertion
easily follows by recalling that $U^\varepsilon\to U$ in $W^{1,p}(B_R)$, using a well-known variant of
Lebesgue's dominated convergence theorem. 
\end{proof}

\begin{prop}\label{exist}
	For every $\varepsilon \in (0, \varepsilon_0]$, there exists a unique solution $u^\varepsilon$ to
	\begin{equation}\label{eq:minepsilon}
		\min\left\{\cF_\varepsilon[v,B_R]: \, v\in U^\varepsilon+W^{1,p}_0(B_R)\right\}.
	\end{equation}
	Moreover, this minimizer satisfies the estimate
	\begin{equation}\label{eq:boundp}
		\int_{B_R}|\nabla u^\varepsilon|^p\,dx\le C
	\end{equation}
	for some $C=C(n,p,q,a,U)$.
\end{prop}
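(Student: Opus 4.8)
The plan is to apply the direct method of the calculus of variations to the functional $\cF_\varepsilon[\cdot, B_R]$ over the affine space $U^\varepsilon + W^{1,p}_0(B_R)$, and then to derive the uniform bound \eqref{eq:boundp} by comparing $u^\varepsilon$ with the competitor $U^\varepsilon$ and invoking Lemma \ref{lavr}. First I would observe that the admissible class is nonempty (it contains $U^\varepsilon$, which by \eqref{eq:C1} belongs to $W^{1,q}(B_R)$, so $\cF_\varepsilon[U^\varepsilon, B_R] < +\infty$) and that the extra term $\frac{\varepsilon}{1+\|\nabla U^\varepsilon\|^q_{L^q(B_R)}}\int_{B_R}|\nabla v|^q\,dx$ makes $\cF_\varepsilon$ coercive on $U^\varepsilon + W^{1,p}_0(B_R)$ with respect to the $W^{1,q}$-norm: indeed $\cF_\varepsilon[v, B_R] \geq \int_{B_R}|\nabla v|_p^p\,dx + c_\varepsilon \int_{B_R}|\nabla v|^q\,dx$, and $\int_{B_R}|\nabla v|_p^p\,dx \approx \int_{B_R}|\nabla v|^p\,dx$ by \eqref{eq:pequiv}, so a minimizing sequence is bounded in $W^{1,q}(B_R)$. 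Passing to a weakly convergent subsequence and using the sequential weak lower semicontinuity of $\cF_\varepsilon$ (each integrand $z \mapsto |z|_p^p + a(x)|z|_q^q + c_\varepsilon|z|^q$ is convex in $z$ and nonnegative, and $a \geq 0$ is continuous hence measurable, so the standard lower semicontinuity theorem for convex integrands applies) yields a minimizer $u^\varepsilon$. Uniqueness follows from strict convexity: the map $z \mapsto |z|^q$ is strictly convex for $q > 1$, so $v \mapsto \cF_\varepsilon[v, B_R]$ is strictly convex on the affine space, whence the minimizer is unique.

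For the a priori bound \eqref{eq:boundp}, minimality of $u^\varepsilon$ against the competitor $U^\varepsilon$ gives
\begin{equation*}
\int_{B_R}|\nabla u^\varepsilon|_p^p\,dx \leq \cF_\varepsilon[u^\varepsilon, B_R] \leq \cF_\varepsilon[U^\varepsilon, B_R] = \cF_0[U^\varepsilon, B_R] + \frac{\varepsilon}{1+\|\nabla U^\varepsilon\|^q_{L^q(B_R)}}\int_{B_R}|\nabla U^\varepsilon|^q\,dx.
\end{equation*}
The second term on the right is bounded by $\varepsilon \leq \varepsilon_0 \leq 1$ trivially. For the first term, Lemma \ref{lavr} (applicable since $\frac{q}{p} < 1 + \frac{\alpha}{n}$ implies $\frac{q}{p} \leq 1 + \frac{\alpha}{n}$) gives $\cF_0[U^\varepsilon, B_R] \to \cF_0[U, B_R] < +\infty$ as $\varepsilon \to 0$; in particular $\sup_{0 < \varepsilon \leq \varepsilon_0}\cF_0[U^\varepsilon, B_R] =: C_0 < +\infty$. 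Combining, $\int_{B_R}|\nabla u^\varepsilon|_p^p\,dx \leq C_0 + 1$, and by \eqref{eq:pequiv} this controls $\int_{B_R}|\nabla u^\varepsilon|^p\,dx$ up to a dimensional constant. Tracking the dependencies, $C_0$ depends on $a$, on $U$ (through $\cF_0[U, B_R]$ and the convergence rate), and on $n, p, q$, which is exactly the claimed dependence $C = C(n, p, q, a, U)$.

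The only genuinely delicate point is ensuring that $\sup_\varepsilon \cF_0[U^\varepsilon, B_R]$ is finite rather than merely that the limit exists; but convergence of a sequence of real numbers to a finite limit automatically gives a uniform bound over the tail, and for the finitely many $\varepsilon$ near $\varepsilon_0$ one uses that each $\cF_0[U^\varepsilon, B_R] \leq \|1 + a\|_{L^\infty}\int_{B_R}(|\nabla U^\varepsilon|_p^p + |\nabla U^\varepsilon|_q^q)\,dx < \infty$ since $U^\varepsilon \in C^\infty(\overline{B_R})$. Everything else is a routine application of the direct method; the role of the $\varepsilon$-term is purely to force $W^{1,q}$-coercivity so that weak limits stay in the space where $\cF_0$ is finite, and the role of Lemma \ref{lavr} is purely to make the comparison value $\cF_0[U^\varepsilon, B_R]$ stable as $\varepsilon \to 0$.
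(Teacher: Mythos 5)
Your proposal is correct and takes essentially the same route as the paper: direct method plus strict convexity for existence and uniqueness, then comparison with the competitor $U^\varepsilon$, bounding the regularization term by $\varepsilon$ and the term $\cF_0[U^\varepsilon,B_R]$ uniformly via Lemma~\ref{lavr}. The only slip is the phrase ``finitely many $\varepsilon$ near $\varepsilon_0$'' (the range of $\varepsilon$ away from $0$ is a continuum, and pointwise finiteness alone does not give a sup bound there); the uniform bound on that range follows instead from \eqref{eq:C1}, or for all $\varepsilon\in(0,\varepsilon_0]$ at once from the pointwise domination $F(x,\nabla U^\varepsilon(x))\le \delta^{-1}\,F(\cdot,\nabla U)\ast\rho_\varepsilon(x)$ established in the proof of Lemma~\ref{lavr}.
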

\begin{proof}
	The existence of $u^\varepsilon$ follows from a simple application of the direct method, while uniqueness follows \UUU by strict \EEE convexity. By minimality, we have $\cF_\varepsilon[u^\varepsilon,B_R]\le\cF_\varepsilon[U^\varepsilon,B_R]$, so that
	\begin{align*}
		\int_{B_R} |\nabla u^\varepsilon|^p\,dx&\le \GGG n^{(p-2)/2}\cF_\varepsilon[u^\varepsilon,B_R]\\
		&\le \GGG n^{(p-2)/2} \cF_{0}[U^\varepsilon,B_R]+n^{(p-2)/2}\frac{\varepsilon}{1+\|\nabla U^\varepsilon\|^q_{L^q(B_R)}}\int_{B_R}  |\nabla U^\varepsilon(x) |^{q}  \,dx\\
		&\le \GGG n^{(p-2)/2} \cF_{0}[U^\varepsilon,B_R]+n^{(p-2)/2}{\varepsilon} \le C,
	\end{align*}
where the uniform bound follows by Lemma \ref{lavr}.
\end{proof}

\begin{prop}
	\UUU For $\varepsilon \in (0, \varepsilon_0]$, let $u^{\varepsilon}$ be as in Proposition \ref{exist}. Then, \GGG $u^\varepsilon\in W^{1,\infty}_{loc}(B)\cap W^{2,2}_{loc}(B)$. \EEE
\end{prop}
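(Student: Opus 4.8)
\smallskip

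The plan is to deduce the claimed regularity from the standard theory for functionals with $(p,q)$-growth, the point being that the regularizing term of $\cF_\varepsilon$ controls the \emph{full} $L^q$-norm of the gradient, so that, unlike for $\cF_0$, no Lavrentiev gap is present and the Euler--Lagrange equation of $\cF_\varepsilon$ is available on all of $W^{1,q}_0(B_R)$.

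Since $\cF_\varepsilon[u^\varepsilon,B_R]<+\infty$ and the $\varepsilon$-term of $\cF_\varepsilon$ equals $\lambda_\varepsilon\int_{B_R}|\nabla v|^q\dx$ with $\lambda_\varepsilon:=\varepsilon/(1+\|\nabla U^\varepsilon\|^q_{L^q(B_R)})>0$, I first record that $u^\varepsilon\in W^{1,q}(B_R)$, and that $u^\varepsilon$ is a local minimizer of $\cF_\varepsilon[\cdot,B_R]$ in the class $u^\varepsilon+W^{1,q}_0(B_\rho)$ for every $B_\rho\Subset B_R$. Writing the integrand as $f_\varepsilon(x,z)=|z|_p^p+a(x)|z|_q^q+\lambda_\varepsilon|z|^q$, it is of class $C^2$ in $z$ (here $2\le p\le q$ is used) and satisfies $\lambda_\varepsilon\,c\,|z|^{q-2}|\xi|^2\le\langle D^2_z f_\varepsilon(x,z)\xi,\xi\rangle\le C(1+|z|)^{q-2}|\xi|^2$, together with the componentwise lower bound coming from the $|z|_p^p$ term; moreover $a(\cdot)$ satisfies \eqref{hpsobolev}, with $\tfrac{n}{1-\alpha}>n$. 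By minimality, and since the $q$-growth of $D_z f_\varepsilon$ makes $t\mapsto\cF_\varepsilon[u^\varepsilon+t\varphi]$ differentiable for $\varphi\in W^{1,q}_0(B_R)$, the function $u^\varepsilon$ solves
\[
\int_{B_R}\big\langle D_z f_\varepsilon(x,\nabla u^\varepsilon),\nabla\varphi\big\rangle\dx=0\qquad\text{for every }\varphi\in W^{1,q}_0(B_R).
\]

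The next step is the bound $\nabla u^\varepsilon\in L^\infty_{loc}(B_R)$. Taking increments of the equation in a coordinate direction (rigorously, testing with $\tau_{-h}(\eta^2\tau_h u^\varepsilon)$ for a cut-off $\eta$) and invoking the algebraic inequalities \eqref{eq:vp}--\eqref{eq:vp5}, one gets a Caccioppoli-type inequality for $W^i_p(\nabla u^\varepsilon)$, $W^i_q(\nabla u^\varepsilon)$ and $V_q(\nabla u^\varepsilon)$; the non-autonomous terms, carrying increments of $a$, are controlled through \eqref{hpsobolev} by H\"older's inequality and the Sobolev embedding, and then absorbed. A Moser-type iteration yields a local $L^\infty$ estimate for $\nabla u^\varepsilon$ — the iteration closing precisely because $\tfrac{q}{p}<1+\tfrac{\alpha}{n}<1+\tfrac{2}{n}$ provides the right balance of Sobolev exponents — with a constant that is allowed to depend on $\varepsilon$, which is immaterial here. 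Alternatively, one may simply invoke the gradient bounds for minimizers of $(p,q)$-functionals with Sobolev coefficients recalled in the Introduction, applied to $\cF_\varepsilon$.

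Finally, for $u^\varepsilon\in W^{2,2}_{loc}(B_R)$, I return to the difference-quotient estimate on a ball $B_\rho\Subset B_R$ on which $|\nabla u^\varepsilon|\le M$: all powers of $\nabla u^\varepsilon$ are now dominated by $M$, while $\|\tau_h a\|_{L^\infty(B_\rho)}\le[a]_{0,\alpha}|h|^\alpha\to0$, so the error terms become of lower order and one obtains a bound on $\|\tau_h\nabla u^\varepsilon\|_{L^2(B_{\rho/2})}$ uniform in $h$, hence $\nabla u^\varepsilon\in W^{1,2}_{loc}(B_R)$. When $p=2$ the required coercivity $\langle D^2_z f_\varepsilon(x,z)\xi,\xi\rangle\gtrsim|\xi|^2$ is immediate; for $p>2$ one combines the estimates for $W^i_p(\nabla u^\varepsilon)$ and $V_q(\nabla u^\varepsilon)$ with the local Lipschitz bound and the uniform ellipticity of the equation away from the critical set $\{\nabla u^\varepsilon=0\}$, on which $D^2u^\varepsilon=0$ a.e. I expect the genuine obstacle to lie exactly in this last circle of estimates — making the degenerate, non-autonomous Caccioppoli inequality close and upgrading the natural nonlinear $W^{1,2}$ bounds on $W^i_p(\nabla u^\varepsilon)$, $V_q(\nabla u^\varepsilon)$ to a $W^{2,2}$ bound on $u^\varepsilon$ — whereas the Euler--Lagrange equation and the high local integrability of $\nabla u^\varepsilon$ are, in this regularized setting, comparatively routine.
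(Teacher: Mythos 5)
Your route is genuinely different from the paper's: the paper proves this proposition in one line, by observing that the perturbation $\sigma_\varepsilon\int|\nabla v|^q$, $\sigma_\varepsilon=\varepsilon/(1+\|\nabla U^\varepsilon\|_{L^q(B_R)}^q)>0$, turns $\cF_\varepsilon$ into a functional with \emph{standard} polynomial growth and coercivity of order $q$, and then quoting the known regularity theory for such functionals (\cite{kuusimingio}, \cite{apdn}). You instead try to reprove the regularity by hand. Your preliminary observations are correct (for fixed $\varepsilon$ the Euler--Lagrange equation holds against all of $W^{1,q}_0(B_R)$, no Lavrentiev gap), but note that for fixed $\varepsilon$ the condition $\frac{q}{p}<1+\frac{\alpha}{n}$ plays no role whatsoever: the problem is a standard $q$-growth problem, and invoking the gap bound to ``close the Moser iteration'' misplaces where the difficulty lies (the gap condition is only needed for estimates uniform in $\varepsilon$, which is not the issue here). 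Also, the proposition sits in the approximation section where only \eqref{hpholder} is in force; assuming \eqref{hpsobolev} is unnecessary, though harmless since H\"older continuity of $a$ suffices for the standard theory.

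The genuine gap is your $W^{2,2}_{loc}$ step when $p,q>2$. The integrand of $\cF_\varepsilon$ is still degenerate at $z=0$: its Hessian is bounded below only by $c\,\sigma_\varepsilon|z|^{q-2}|\xi|^2$ plus the orthotropic terms $\sum_i|z_i|^{p-2}\xi_i^2$, all of which vanish at the origin. Hence the difference-quotient estimates give control of $W^i_p(\nabla u^\varepsilon)$ and $V_q(\nabla u^\varepsilon)$ in $W^{1,2}_{loc}$, i.e.\ of second differences weighted by $(|u^\varepsilon_{x_i}(x+h)|+|u^\varepsilon_{x_i}(x)|)^{p-2}$ or $(|\nabla u^\varepsilon(x+h)|+|\nabla u^\varepsilon(x)|)^{q-2}$, and a local Lipschitz bound removes these weights only from above, not from below: they still vanish on the critical set, so no uniform $L^2$ bound on $\tau_h\nabla u^\varepsilon$ follows. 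Your proposed fix --- uniform ellipticity away from $\{\nabla u^\varepsilon=0\}$ together with ``$D^2u^\varepsilon=0$ a.e.\ on $\{\nabla u^\varepsilon=0\}$'' --- is circular, since the a.e.\ vanishing of $D^2u^\varepsilon$ on that level set already presupposes $u^\varepsilon\in W^{2,1}_{loc}$, which is what you are trying to prove; and the quantitative bounds you get on $\{|\nabla u^\varepsilon|>\delta\}$ degenerate as $\delta\to0$. You candidly flag this as ``the genuine obstacle'', but flagging it does not close it. This is exactly the point the paper does not reprove but imports from the cited standard-growth results; alternatively one could settle it by a further nondegenerate regularization (e.g.\ replacing $|z_i|^p$, $|z|^q$ by $(\delta+|z_i|^2)^{p/2}$, $(\delta+|z|^2)^{q/2}$, where the equation is uniformly elliptic on sets of bounded gradient, and letting $\delta\to0$), but as written your argument does not establish $u^\varepsilon\in W^{2,2}_{loc}(B)$.
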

\begin{proof}
This follows by standard regularity theory (see for instance \cite{kuusimingio}, \cite{apdn}), since  the functionals appearing in \eqref{eq:appr} have 
standard polynomial growth of order $q$.

\end{proof}
\begin{prop}\label{prop:approx} 
\GGG With the same notation as above we have
\begin{equation}\label{strongconv}
\lim_{\varepsilon\to 0}\|u^\varepsilon-U\|_{W^{1,p}(B_R)}=0.
\end{equation}
\end{prop}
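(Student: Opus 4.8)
The plan is to combine the energy-convergence statement of Lemma~\ref{lavr} with the minimality of $u^\varepsilon$ and a uniformity argument based on the uniform convexity of $\cF_\varepsilon$ along the relevant difference. First I would record the two-sided comparison
\[
\cF_0[u^\varepsilon, B_R] \le \cF_\varepsilon[u^\varepsilon, B_R] \le \cF_\varepsilon[U^\varepsilon, B_R] = \cF_0[U^\varepsilon, B_R] + \frac{\varepsilon}{1+\|\nabla U^\varepsilon\|^q_{L^q(B_R)}} \int_{B_R}|\nabla U^\varepsilon|^q\,dx \le \cF_0[U^\varepsilon, B_R] + \varepsilon,
\]
so that, by Lemma~\ref{lavr}, $\limsup_{\varepsilon\to 0}\cF_0[u^\varepsilon, B_R] \le \cF_0[U, B_R]$; in particular $\cF_\varepsilon[u^\varepsilon, B_R]$ is bounded uniformly in $\varepsilon$, whence by \eqref{eq:boundp} the family $\{u^\varepsilon\}$ is bounded in $W^{1,p}(B_R)$, and since $u^\varepsilon - U^\varepsilon \in W^{1,p}_0(B_R)$ with $U^\varepsilon \to U$ in $W^{1,p}(B_R)$, along a subsequence $u^\varepsilon \rightharpoonup w$ weakly in $W^{1,p}(B_R)$ for some $w$ with $w - U \in W^{1,p}_0(B_R)$.

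Next I would identify the weak limit $w$ with $U$. By weak lower semicontinuity of $\cF_0[\cdot, B_R]$ (the integrand $(x,z)\mapsto |z|_p^p + a(x)|z|_q^q$ is convex in $z$, nonnegative, and Carath\'eodory, so the functional is sequentially weakly lower semicontinuous on $W^{1,p}$ — one may restrict to $W^{1,p}$-convergence since on the subset where the $q$-part is finite the standard Ioffe-type theorem applies; alternatively split off the already-controlled $|\nabla U^\varepsilon|^q$ correction) one gets
\[
\cF_0[w, B_R] \le \liminf_{\varepsilon\to 0}\cF_0[u^\varepsilon, B_R] \le \limsup_{\varepsilon\to 0}\cF_0[u^\varepsilon, B_R] \le \cF_0[U, B_R].
\]
But $U$ is a local minimizer of $\cF_0$ and $w - U \in W^{1,p}_0(B_R)$ with $\cF_0[U, B_R] < +\infty$, so $\cF_0[U, B_R] \le \cF_0[w, B_R]$; hence all the inequalities are equalities, $w$ is itself a minimizer of $\cF_0$ on $U + W^{1,p}_0(B_R)$, and $\lim_{\varepsilon\to 0}\cF_0[u^\varepsilon, B_R] = \cF_0[U, B_R] = \cF_0[w, B_R]$. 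Strict convexity of the integrand in $z$ (here $p \ge 2$, so $|z|_p^p$ is strictly convex, and $a \ge 0$) forces the minimizer on $U + W^{1,p}_0(B_R)$ to be unique, so $w = U$; moreover the full family (not just a subsequence) converges weakly to $U$.

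To upgrade weak to strong $W^{1,p}$-convergence I would use uniform convexity. Test the minimality of $u^\varepsilon$ against the competitor $\tfrac12(u^\varepsilon + U^\varepsilon) \in U^\varepsilon + W^{1,p}_0(B_R)$: writing $z = \nabla u^\varepsilon$, $z' = \nabla U^\varepsilon$ and using the elementary uniform-convexity inequality for $\sum_i |t_i|^p$ with $p \ge 2$,
\[
\Big|\tfrac{z+z'}{2}\Big|_p^p + c\, |z - z'|_p^p \le \tfrac12 |z|_p^p + \tfrac12 |z'|_p^p
\]
pointwise (and the analogous convexity, without the gain, for the $a(x)|\cdot|_q^q$ and $\varepsilon|\cdot|^q$ terms), one obtains
\[
c\int_{B_R}|\nabla u^\varepsilon - \nabla U^\varepsilon|_p^p\,dx \le \tfrac12\cF_\varepsilon[u^\varepsilon, B_R] + \tfrac12\cF_\varepsilon[U^\varepsilon, B_R] - \cF_\varepsilon\big[\tfrac{u^\varepsilon+U^\varepsilon}{2}, B_R\big] \le \cF_\varepsilon[U^\varepsilon, B_R] - \cF_\varepsilon[u^\varepsilon, B_R],
\]
using $\cF_\varepsilon[u^\varepsilon, B_R] \le \cF_\varepsilon[\tfrac{u^\varepsilon+U^\varepsilon}{2}, B_R]$ in the last step. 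The right-hand side tends to $0$: indeed $\cF_\varepsilon[U^\varepsilon, B_R] \to \cF_0[U, B_R]$ by Lemma~\ref{lavr} (the correction term is $\le \varepsilon$), while $\liminf_\varepsilon \cF_\varepsilon[u^\varepsilon, B_R] \ge \liminf_\varepsilon \cF_0[u^\varepsilon, B_R] \ge \cF_0[U, B_R]$ by the lower semicontinuity above. Hence $\nabla u^\varepsilon - \nabla U^\varepsilon \to 0$ in $L^p(B_R; \R^n)$ (using \eqref{eq:pequiv} to pass between $|\cdot|_p$ and the Euclidean norm), and since $U^\varepsilon \to U$ in $W^{1,p}(B_R)$ and $u^\varepsilon - U^\varepsilon \in W^{1,p}_0(B_R)$ has zero boundary trace (so Poincar\'e controls the $L^p$ norm of $u^\varepsilon - U^\varepsilon$ by that of its gradient), \eqref{strongconv} follows.

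The main obstacle is the justification of weak lower semicontinuity of $\cF_0$ together with the uniqueness of the minimizer on $U + W^{1,p}_0(B_R)$: one must make sure the Lavrentiev-type gap does not spoil the identification of the limit — this is exactly where the hypothesis $\frac{q}{p} \le 1 + \frac{\alpha}{n}$ enters through Lemma~\ref{lavr}, which guarantees $\cF_0[U^\varepsilon,B_R]\to\cF_0[U,B_R]$ and hence that $U$ is the relevant minimizer. A cleaner route that sidesteps a separate lower-semicontinuity discussion is to run the uniform-convexity computation directly against $U^\varepsilon$ as competitor and bound $\cF_\varepsilon[U^\varepsilon,B_R]-\cF_\varepsilon[u^\varepsilon,B_R]$ from above by $\cF_0[U^\varepsilon,B_R]-\cF_0[U,B_R]+\varepsilon + (\cF_0[U,B_R]-\cF_0[u^\varepsilon,B_R])$ and absorb the last, possibly positive, difference using that $u^\varepsilon$ is $\cF_\varepsilon$-minimal and converges weakly to some limit which by the minimality of $U$ cannot strictly lower the energy; I would present the argument in the order above, as it keeps the structural inequalities \eqref{eq:pequiv}, \eqref{eq:vp4}--\eqref{eq:vp5} and Lemma~\ref{lavr} doing all the work.
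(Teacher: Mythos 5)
Your argument is correct, and it splits naturally into two halves. The first half (uniform energy bound via the comparison $\cF_\varepsilon[u^\varepsilon,B_R]\le\cF_\varepsilon[U^\varepsilon,B_R]\le\cF_0[U^\varepsilon,B_R]+\varepsilon$, Lemma~\ref{lavr}, weak compactness, lower semicontinuity of $\cF_0$, and identification of the weak limit with $U$ by strict convexity) is essentially identical to the paper's proof. The second half — the upgrade from weak to strong $W^{1,p}$ convergence — is where you genuinely diverge: the paper follows the argument of \cite[Lemma 2.6]{BouBraLeo}, i.e.\ it deduces from \eqref{scifunct}--\eqref{convfunct} the convergence of the total energies, then uses componentwise lower semicontinuity of $\int|u^{\varepsilon_k}_{x_i}|^p$ and of the weighted $q$-term to extract convergence of each individual $L^p$ norm of $u^{\varepsilon_k}_{x_i}$, and concludes by the Radon--Riesz property (weak convergence plus norm convergence in $L^p$); you instead test the minimality of $u^\varepsilon$ against the midpoint $\tfrac12(u^\varepsilon+U^\varepsilon)$, which is admissible and of finite $\cF_\varepsilon$-energy, and use the pointwise Clarkson inequality for $\sum_i|t_i|^p$ with $p\ge2$ to bound $c\int_{B_R}|\nabla u^\varepsilon-\nabla U^\varepsilon|_p^p$ by the energy gap $\cF_\varepsilon[U^\varepsilon,B_R]-\cF_\varepsilon[u^\varepsilon,B_R]$, which vanishes by Lemma~\ref{lavr} and the lower-semicontinuity bound. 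Your route is more quantitative (it yields an explicit modulus in terms of the energy gap), stays entirely within the finite-energy class by comparing with $U^\varepsilon$ rather than $U$, and avoids both the componentwise splitting and the abstract Radon--Riesz step; the paper's softer argument, on the other hand, does not use the restriction $p\ge2$ at this point and so transfers unchanged to the subquadratic range $1<p<2$ mentioned in the outlook, where Clarkson's first inequality fails and your estimate would need to be replaced by its $V_p$-type analogue. Both proofs let Lemma~\ref{lavr} carry the Lavrentiev-type difficulty, exactly as you indicate.
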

\EEE
\begin{proof}
	By using the uniform estimate~\eqref{eq:boundp} and the definition of~$U^\varepsilon$ we have that there exists a sequence $\left\{\varepsilon_k\right\}_{k\in\N}$, $0<\varepsilon_k\le\varepsilon_0$, such that $\varepsilon_k\to 0$ and $u^{\varepsilon_k}-U^{\varepsilon_k}$ converges to some $\tilde{u}\in W_0^{1,p}(B_R)$ weakly in~$W_0^{1,p}(B_R)$ and almost everywhere. By recalling that~$U^{\varepsilon_k}$ has been constructed by convolution, we also have that it converges strongly
in $W^{1,p}(B_R)$ and almost everywhere to~$U.$ This permits to conclude that~$u^{\varepsilon_k}$ converges weakly
and almost everywhere to $u:=\tilde{u}+U.$ Since~$U^\varepsilon$ is admissible for problem \eqref{eq:minepsilon}, we have that
	\begin{equation}\label{scifunct}
		\liminf\limits_{k\to\infty} \UUU \cF_{\varepsilon_k} \EEE [U^{\varepsilon_k},B_R]\ge\liminf\limits_{k\to\infty} \UUU \cF_{\varepsilon_k} \EEE [u^{\varepsilon_k},B_R]\ge\liminf\limits_{k\to\infty}\UUU \cF_{0} \EEE [u^{\varepsilon_k},B_R]\ge\UUU \cF_{0} \EEE [{u},B_R]
\end{equation}
by lower semicontinuity of $\cF_{0}$. Next, we notice that  by Lemma \ref{lavr} 
\begin{equation}\label{convfunct}
\begin{split}
	\lim\limits_{k\to\infty}\left|\UUU \cF_{\varepsilon_k} \EEE [U^{\varepsilon_k},B_R]-\UUU \cF_{0} \EEE [U,B_R]\right|&\le\lim\limits_{k\to\infty}\left|\UUU \cF_{0} \EEE [U^{\varepsilon_k},B_R]-\UUU \cF_{0} \EEE [U,B_R]\right|\\
		&+\lim\limits_{k\to\infty}\frac{\varepsilon_k}{1+\|\nabla U^{\varepsilon_k}\|^q_{L^q(B_R)}}\int_{B_R}\left|\nabla U^{\varepsilon_k}\right|^q\,dx=0
	\end{split}
	\end{equation}
	and thus
	\begin{equation*}
		\UUU \cF_{0} \EEE [U,B_R]=\lim\limits_{k\to\infty} \UUU \cF_{\varepsilon_k} \EEE [U^{\varepsilon_k},B_R]\ge\UUU \cF_{0} \EEE [{u},B_R]\,.
	\end{equation*}
	
	\GGG By the strict convexity of the functional, the minimizer must be unique and thus we get ${u}=U,$ as desired. In order to prove \eqref{strongconv} we can adapt the argument of~\cite[Lemma 2.6]{BouBraLeo}. By~\eqref{scifunct}, and taking into account \eqref{convfunct} and the fact that $u=U$, we get
\begin{equation}
\label{norms}
\lim_{k\to \infty} \sum_{i=1}^n\,\int_{B_R} \left(\left|u^{\varepsilon_k}_{x_i}\right|^{p}+a(x)\left|u^{\varepsilon_k}_{x_i}\right|^{q}\right)dx=\sum_{i=1}^n \int_{B_R}\left( \left|U_{x_i}\right|^{p}+
a(x)\left|U_{x_i}\right|^{q}\right)dx.
\end{equation}
For every \(i=1,\dots,n\), we rely on the lower semicontinuity of the $L^{p}$ norm to get
\[
\liminf_{k\to \infty} \int_{B_R} \left| u^{\varepsilon_k}_{x_i}\right|^{p}dx \geq \int_{B_R} \left|U_{x_i}\right|^{p}dx, 
\]
as well as 
\[
\liminf_{k\to \infty} \sum_{i=1}^n\int_{B_R} a(x)\left|u^{\varepsilon_k}_{x_i}\right|^{q}dx \geq \sum_{i=1}^n\int_{B_R}a(x) \left|U_{x_i}\right|^{q}dx.
\]
In connection with \eqref{norms}, this implies that 
\[
\lim_{k\to \infty} \int_{B_R} \left|u^{\varepsilon_k}_{x_i}\right|^{p}dx = \int_{B_R} \left|U_{x_i}\right|^{p}dx.
\]
The convergence of the norms, combined with the weak convergence, permits to infer that~$u^{\varepsilon_k}_{x_i}$ converges to~$U_{x_i}$ in \(L^{p}(B_R)\) for every \(i=1, \dots, n\). Finally, we observe that we can repeat this argument with any subsequence of the original family $\{u^{\varepsilon}\}_{0<\varepsilon\le\varepsilon_0}$. Thus the above limit holds true for the whole family $\{u^{\varepsilon}\}_{0<\varepsilon\le \varepsilon_0}$ instead of $\{u^{\varepsilon_k}\}_{k\in \mathbb{N}}$ and \eqref{strongconv} follows.

\end{proof}

\section{Higher integrability}
\label{s:higher-integrability}

In this section, we establish Theorem \ref{revhol}. We assume throughout the section that $2\le p\le q<+\infty$, the function $a(\cdot)$ satisfies \eqref{hpholder} and $\frac{q}{p}<1+\frac{\alpha}{n}.$


\begin{proof}[Proof of Theorem~\ref{revhol}]
We adapt the proof of Theorem 5.1 in \cite{colombo2015regularity} to our setting.
	
	We claim that if $B_R=B_R(x_0) \UUU \Subset \Omega$ 
	then for every $\beta\in (n(q/p-1),\alpha)$ there exists $c=c(n,p,q,\beta)>0$ such that
	\begin{equation}\label{eq:prerh}
		\left(\fint_{B_{R/2}}|\nabla U|_p^{\frac{np}{n-2\beta}}\,dx\right)^{\frac{n-2\beta}{n}}\le c\fint_{B_R}|\nabla U|_p^p\,dx+\left(\|a\|_{L^\infty(B_R)}^2+R^{2\alpha}[a]_{0,\alpha}^2\right)^{b_1}\left(\fint_{B_R}|\nabla U|_p^p\right)^{b_2}
	\end{equation}
	with
	\begin{equation}\label{eq:bb}
		b_1=\frac{\beta p}{\beta p-n(q-p)}\ge 1,\quad b_2=\frac{\beta(2q-p)-n(q-p)}{\beta p-n(q-p)}\ge 1.
	\end{equation}
	In particular, \eqref{eq:prerh} implies \eqref{eq:rh2} with an appropriate choice of $\beta$.
	
	Under \EEE the transformations
	\begin{equation}\label{e:rescaling}
		\tilde{U}(x)=\frac{U(x_0+Rx)-(U)_{B_R}}{R},\quad \tilde{a}(x)=a(x_0+Rx)
	\end{equation}
	with $(U)_{B_R}$ the integral average of $U$ on $B_R$, 
	we have that $\tilde{U}\in W^{1,p}(\UUU B \EEE )$ is a minimizer of
	\begin{equation*}
		\UUU \widetilde{\cF}_{0} \EEE[\tilde{v}]=\int_{\UUU B \EEE} |\nabla\tilde{v}|_p^p  +  \tilde{a}(x)|\nabla\tilde{v}|_q^q\,dx,
	\end{equation*}
	\UUU where we have set $B = B_{1}$ the ball of center $0$ and radius $1$. We notice that \EEE inequality \eqref{eq:prerh} becomes
	\begin{equation}\label{eq:revholsc}
\left\||\nabla \tilde{U}|_p\right\|_{L^{np/(n-2\beta)}(B_{1/2})}^p\le c\left\||\nabla \tilde{U}|_p\right\|_{L^p( B )}^p+c\left(\|\tilde{a}\|_{L^\infty( B  )}^2+[\tilde{a}]_{0,\alpha}^2\right)^{b_1}\left\||\nabla \tilde{U}|_p\right\|_{L^p( B )}^{pb_2}.
	\end{equation}
	\EEE
	
Next steps aim at proving inequality \eqref{eq:revholsc}. 
	
	\noindent{\bf Step 1.} 
        \GGG
        We are in position to apply Lemma \ref{lavr} and to consider a sequence $\left\{u_m\right\}_{m\in\N}\subset W^{1,\infty}(B)$ such that
		\begin{equation*}
			u^m\to \tilde{U}\text{ in }W^{1,p}(B),\quad \UUU \widetilde{\cF}_{0} [u^m] \EEE \to \UUU \widetilde{\cF}_{0}[\tilde{U}] 
		\end{equation*}
		\EEE where
		\begin{equation*}
			\UUU \widetilde{\cF}_{0} \EEE [v] : =\int_B (|\nabla v|_p^p+\tilde{a}|\nabla v|_q^q)\,dx \qquad \UUU \text{for $v \in W^{1, p} (B)$}. \EEE
		\end{equation*}
		\UUU For $m \in \mathbb{N}$, $x \in B$, and $z \in \R^{n}$ \EEE we now denote
		\begin{equation*}
			\begin{cases}
				\sigma_m :=\left(1+m+\|\nabla u^m\|_{L^{2q-p}(B)}^{2(2q-p)}\right)^{-1}\\
				F(x,z) = |z|_p^p+\tilde{a}(x)|z|_q^q\\
				F_m(x,z)= F(x,z) + \EEE \sigma_m|z|^{2q-p}.
			\end{cases}
		\end{equation*}
		The functional
		\begin{equation*}
			\fF_m[w]=\int_B F_m(x,\nabla w)\,dx \qquad \UUU \text{for $w \in W^{1, p} (B)$} \EEE
		\end{equation*}
		behaves like a standard functional of growth $2q-p$, and therefore admits a unique minimizer $v^m\in u^m+W^{1,2q-p}(B)$, which by standard techniques belongs to $W^{1,\infty}_{loc}(B)$ (see for instance~\cite{kuusimingio}).
		
        \noindent{\bf Step 2.} We prove that for $\beta\in(0,\alpha)$ there exists \UUU a constant $c=c(n,p,q,\beta)>0$ \EEE such that \UUU for every $m \in \mathbb{N}$ \EEE
		\begin{equation}
        \label{e:claim-vm}
			\||\nabla v^m|_p\|_{L^{np/(n-2\beta)(B_{1/2})}}^p\le c\||\nabla v^m|_p\|_{L^p(B)}^p+c\left(\|\tilde{a}\|_{L^\infty(B)}+[\tilde{a}]_{0,\alpha}+\sigma_m\right)\||\nabla v^m|_p\|_{L^{2q-p}(B)}^{2q-p}.
		\end{equation}
		Consider the Euler-Lagrange equation for $\fF_m$:
		\begin{equation}\label{eq:Fm_EL}
			\int_B\langle\nabla_z F_m(x,\nabla v_m),\nabla\varphi\rangle\,dx=0\qquad \text{for every }\varphi\in W^{1,2q-p}_0(B).
		\end{equation}
		\UUU Let \EEE $\eta\in C^\infty_0(B)$ be such that $0\le\eta\le 1$, $\eta\equiv 0$ outside $B_{3/4}$, $\eta\equiv 1$ on $B_{2/3}$ and $|\nabla\eta|^2+|D^2\eta|\le M$, for some $M$. For $h\in\R^n$ sufficiently small, e.g., $|h|<10^{-4}$, we test \eqref{eq:Fm_EL} with $\varphi=\tau_{-h}(\eta^2\tau_hv^m)$, \UUU where $\tau_{h} v (x) : = v ( x+h) - v (x)$ for $x \in B$. \EEE We obtain
		\begin{align*}
			0=&\int_B\langle\nabla_z F_m(x,\nabla v^m),\nabla\left(\tau_{-h}(\eta^2\tau_hv^m)\right)\rangle\,dx\\
			=&\int_B\langle\nabla_z F_m(x,\nabla v^m),\tau_{-h}\nabla\left(\eta^2\tau_hv^m\right)\rangle\,dx\\
			=&\int_B\langle\nabla_z F_m(x,\nabla v^m),\nabla\left(\eta^2\tau_hv^m\right) \GGG (x-h) \EEE\rangle\,dx\\
			&-\int_B\langle\nabla_z F_m(x,\nabla v^m),\nabla\left(\eta^2\tau_hv^m\right) \GGG (x) \EEE \rangle\,dx\\
		       =& \int_B\langle\nabla_z F_m(\UUU x + h \EEE ,\nabla v^m \UUU(x+h) \EEE),\nabla\left(\eta^2\tau_hv^m\right)(x)\rangle\,dx\\
		        &-\int_B\langle\nabla_z F_m(\UUU x \EEE ,\nabla v^m\UUU(x)\EEE),\nabla\left(\eta^2\tau_hv^m\right)(x)\rangle\,dx\\
                        =&\int_B\langle\tau_h \big( \nabla_z F_m(\cdot,\nabla v^m) \big) ,\nabla(\eta^2\tau_h v^m)\rangle\,dx
		\end{align*}
		We write $\tau_h \big( \nabla_z F_m(\cdot,\nabla v^m) \big) =A_1+A_2$ where, for every $z\in\mathbb{R}^n$ and $x \in B$,
		\begin{align*}
			\langle A_1 (x) ,z\rangle&=\langle\tau_h \big[ \nabla_z F_m( x + h ,\nabla v^m(\cdot )  ) \big] (x) ,z\rangle\\
			&=p\sum_{i=1}^n\left(|v^m_{x_i}(x+h)|^{p-2}v^m_{x_i} \UUU (x + h) \EEE -|v^m_{x_i}(x)|^{p-2}v^m_{x_i}(x)\right)z_i\\
			&+ \UUU \tilde{a} \EEE (x+h)q\sum_{i=1}^n\left(|v^m_{x_i}(x+h)|^{q-2}v^m_{x_i} \UUU (x + h) \EEE -|v^m_{x_i}(x)|^{q-2}v^m_{x_i}(x)\right)z_i \\
			&+\sigma_m(2q-p) \sum_{i=1}^n\left( |\nabla v^m  (x + h) |^{2q-p-2}v^m_{x_i} (x + h)  - |\nabla v^m (x) |^{2q-p-2}v^m_{x_i} (x ) \right)z_i
		\end{align*}
		and
		\begin{align*}
			\langle A_2 (x) ,z\rangle &=\langle\tau_h \big[\nabla_z F_m(\cdot,\nabla v^m(x)) \big] (x) ,z\rangle\\
			&=(\UUU \tilde{a}(x+h)-\tilde{a} \EEE (x))q\sum_{i=1}^n|v^m_{x_i}(x)|^{q-2}v^m_{x_i}(x)z_i. 
		\end{align*}
		\UUU Thus, \EEE we can write
		\begin{equation}  \label{e:first-sum}
        \begin{split}
			I_0&:=\int_B  \UUU \eta^{2} \EEE \langle A_1,\tau_h\nabla v^m\rangle  \,dx\\
			&=-\int_B\eta^2\langle A_2,\tau_h\nabla v^m\rangle\,dx-2\int_B\eta\langle A_2,\nabla \eta\rangle\tau_hv^m\,dx-2\int_B\eta\langle A_1,\nabla \eta\rangle\tau_hv^m\,dx\\
			&=:I_1+I_2+I_3. 
			\end{split}
		\end{equation}
		We estimate $I_0$ by using \eqref{eq:vp} and \eqref{eq:vp4}:
		\begin{equation} \label{e:I0}
		\begin{split}
			I_0&\gtrsim\int_B(\eta^2|\tau_h W_p(\nabla v^m)|^2+ \UUU \tilde{a}(x + h ) \EEE \eta^2|\tau_hW_q(\nabla v^m)|^2+\sigma_m\eta^2|\tau_hV_{2q-p}(\nabla v^m)|^2)\,dx\\
			&\ge\int_B(\eta^2|\tau_h W_p(\nabla v^m)|^2+\sigma_m\eta^2|\tau_hV_{2q-p}(\nabla v^m)|^2)\,dx.
		\end{split}
		\end{equation}
		We estimate $I_1$ using the H\"older continuity of $\tilde{a}$:
		\begin{align*}
			|I_1|&\lesssim[\tilde{a}]_{0,\alpha}|h|^\alpha\int_B\eta^2\sum_{i=1}^n|v^m_{x_i}(x)|^{q-1}|\tau_hv^m_{x_i}(x)|\,dx\\
			&\le[\tilde{a}]_{0,\alpha}|h|^\alpha\int_B\eta^2\sum_{i=1}^n(|v^m_{x_i}(x+h)|+|v^m_{x_i}(x)|)^{q-1}|\tau_hv^m_{x_i}(x)|\,dx.
		\end{align*}
		\UUU Writing $q-1 = ( q- \frac{p}{2}) + (\frac{p}{2} - 1)$, applying a weighted \EEE Young's inequality, inequality \eqref{eq:vp2}, and the fact that $\eta\equiv 0$ outside of $B_{3/4}$, \UUU for $\delta>0$ we \EEE obtain
		\begin{equation}\label{e:I1}
		\begin{split}
			|I_1|&\lesssim\delta\int_B\eta^2\sum_{i=1}^n(|v^m_{x_i}(x+h)|+|v^m_{x_i}(x)|)^{p-2}|\tau_hv^m_{x_i}(x)|^2\,dx\\
			&+\frac{[\tilde{a}]_{0,\alpha}^2|h|^{2\alpha}}{\delta}\int_{B_{3/4}}\sum_{i=1}^n (|v^m_{x_i}(x+h)|+|v^m_{x_i}(x)|)^{2q-p}\,dx \\
			&\lesssim\delta\int_B\eta^2|\tau_hW_p(\nabla v^m)|^2\,dx+\frac{[\tilde{a}]_{0,\alpha}^2|h|^{2\alpha}}{\delta}\int_B |\nabla v^m|^{2q-p}\,dx.
			\end{split}
		\end{equation}
		Using the fact that $|V_{2q-p}(\nabla v^m)|^2=|\nabla v^m|^{2q-p}$ we have
		\begin{equation*}
			|I_1|\lesssim \delta I_0+\frac{[\tilde{a}]_{0,\alpha}^2|h|^{2\alpha}}{\delta}\int_B \UUU |V_{2q-p}(\nabla v^m)|^{2} \EEE\,dx.
		\end{equation*}
		To estimate $I_2$ we recall that for a function $u\in W^{1,\gamma}(B_R),$ $\gamma\ge 1,$ and $B_\rho\subset B_R$ a concentric ball,  the following basic property of
finite differences holds
		\begin{equation*}
			\int_{B_\rho}|\tau_hu|^\gamma\,dx\le|h|^\gamma\int_{B_R}|\nabla u|^\gamma\,dx,
		\end{equation*}
		for $|h|\le R-\rho.$
		 Hence, by Young and H\"older inequality we estimate $I_2$ as 
		\begin{align*}
			|I_2|&\lesssim [\tilde{a}]_{0,\alpha}|h|^\alpha\int_{B_{3/4}}|\nabla v^m|^{q-1}|\tau_hv^m||\nabla\eta|\,dx\\
			&\lesssim|h|^\alpha\left(\int_{B_{3/4}}|\nabla v^m|^{p-1}|\tau_hv^m|\,dx + [\tilde{a}]^{2}_{0,\alpha}  \int_{B_{3/4}}|\nabla v^m|^{2q-p-1}|\tau_hv^m|\,dx\right)\\
			&\lesssim|h|^\alpha\left(\int_B  |\nabla v^m|^{p} \,dx\right)^{\frac{p-1}{p}}\left(\int_{B_{3/4}}|\tau_hv^m|^p\right)^{\frac{1}{p}}\\
			&+ [\tilde{a}]^{2}_{0,\alpha}  |h|^\alpha\left(\int_B|\nabla v^m|^{2q-p}\right)^{\frac{2q-p-1}{2q-p}}\left(\int_{B_{3/4}}|\tau_hv^m|^{2q-p}\right)^{\frac{1}{2q-p}}\\
			&\lesssim|h|^{2\alpha}\left(\int_B|\nabla v^m|^p+ [\tilde{a}]^{2}_{0,\alpha} |\nabla v^m|^{2q-p}\,dx\right)\\
			&\lesssim|h|^{2\alpha}\left(\int_B|\nabla v^m|_p^p+  [\tilde{a}]^{2}_{0,\alpha}  |\nabla v^m|^{2q-p}\,dx\right)\\
		\end{align*}
		where we used the fact that $|h|^{1+\alpha}\le|h|^{2\alpha}$.
        
		We are now left with $I_3$. \UUU Since $p\ge 2$, we \EEE have
		\begin{equation}
			|I_3|\lesssim\|\nabla \eta\|_{L^\infty(B)}(I_{3,p}+\|\tilde{a}\|_{L^\infty(B)}I_{3,q}+\sigma_mJ_{3,2q-p})
		\end{equation}
		where
		\begin{equation*}
			I_{3,\gamma}:=\int_B\eta|\tau_hv^m|\sum_{i=1}^n(|v^m_{x_i}(x+h)|+|v^m_{x_i}(x)|)^{\gamma-2}|\tau_h v^m_{x_i}|\,dx
		\end{equation*}
		for $\gamma=p,q$, and
		\begin{equation*}
			J_{3,\gamma}:=\int_B\eta(|\nabla v^m(x+h)|+|\nabla v^m(x)|)^{\gamma-2}|\tau_h\nabla v^m||\tau_hv^m|\,dx
		\end{equation*}
		for $\gamma=2q-p$.
		Let us estimate these terms. Using Young's inequality \UUU as in~\eqref{e:I1} and recalling~\eqref{eq:vp2} \EEE we have
		\begin{equation}\label{e:I3q1}
		\begin{split}
			\|\tilde{a}\|_{L^\infty(B)}I_{3,q}\lesssim& \ \delta\int_{B_{3/4}}\eta^2\sum_{i=1}^n(|v^m_{x_i}(x+h)|+|v^m_{x_i}(x)|)^{p-2}|\tau_hv^m_{x_i}|^2\,dx\\
			&+\frac{\UUU \|\tilde{a}\|^{2}_{L^\infty(B)}\EEE }{\delta}\int_{B_{3/4}}|\tau_hv^m|^2\sum_{i=1}^n(|v^m_{x_i}(x+h)|+|v^m_{x_i}(x)|)^{2q-p-2}\,dx \\
			\lesssim& \ \delta\int_B\eta^2|\tau_hW_p(\nabla v_m)|^2\,dx \\
			&+\frac{\|\tilde{a}\|_{L^\infty(B)}^2}{\delta}\int_{B_{3/4}}|\tau_hv^m|^2\sum_{i=1}^n(|v^m_{x_i}(x+h)|+|v^m_{x_i}(x)|)^{2q-p-2}\,dx. 
		\end{split}
		\end{equation}
		\UUU By H\"older inequality we further estimate \EEE
		\begin{equation}\label{e:I3q2}
		\begin{split}
			\int_{B_{3/4}}&|\tau_hv^m|^2\sum_{i=1}^n(|v^m_{x_i}(x+h)|+|v^m_{x_i}(x)|)^{2q-p-2}\,dx\\
			&\lesssim\int_{B_{3/4}}|\tau_hv^m|^2(|\nabla v^m(x+h)|+|\nabla v^m(x)|)^{2q-p-2}\,dx\\
			&\lesssim\|\nabla v^m\|_{L^{2q-p}(B)}^{2q-p-2}\|\tau_hv^m\|_{L^{2q-p}(B)}^2 \le |h|^2\|\nabla v^m\|_{L^{2q-p}(B)}^{2q-p}.
		\end{split}
		\end{equation}
		\UUU We deduce from~\eqref{e:I3q1} and~\eqref{e:I3q2} that \EEE
		\begin{equation}
			\|\tilde{a}\|_{L^\infty(B)}I_{3,q}\lesssim\delta I_0+\frac{|h|^2\|\tilde{a}\|_{L^\infty(B)}^2}{\delta}\int_B|\nabla v^m|^{2q-p}\,dx.
		\end{equation}
		\UUU Arguing in a similar way we conclude that \EEE 
		\begin{equation} \label{e:sum-of-I}
			I_{3,p} +\sigma_mJ_{3,2q-p} 
             \lesssim\delta I_0+\frac{|h|^2}{\delta}\int_B|\nabla v^m|_p^p\,dx+\frac{|h|^2\sigma_m}{\delta}\int_B|\nabla v^m|^{2q-p}\,dx.
		\end{equation}
		Combining the inequalities~\eqref{e:first-sum}--\eqref{e:sum-of-I}, using \eqref{eq:pequiv} and the relation $|h|^2\le |h|^{2\alpha}$ we get
		\begin{equation*}
			I_0\lesssim \delta I_0+\frac{|h|^{2\alpha}}{\delta}\int_B|\nabla v^m|_p^p\,dx+\frac{|h|^{2\alpha}}{\delta}T_m\int_B|\nabla v^m|_{2q-p}^{2q-p}\,dx,
		\end{equation*}
		where
		\begin{equation*}
			T_m:=\|\tilde{a}\|_{L^\infty(B)}^2+[\tilde{a}]_{0,\alpha}^2+\sigma_m\,.
		\end{equation*}
		\UUU Therefore, choosing $\delta>0$ small enough,~\eqref{e:I0} yields \EEE
		\begin{equation}\label{e:32}
			\int_B\eta^2|\tau_hW_p(\nabla v^m)|^2\,dx\le c|h|^{2\alpha}\left(\int_B|\nabla v^m|_p^p\,dx+T_m\int_B|\nabla v^m|_{2q-p}^{2q-p}\,dx\right).
		\end{equation}
		\UUU Inequality~\eqref{e:32} \EEE implies that $W_p(\nabla v_m)$ belongs to the Nikol'ski\u{\i} space $\mathcal N^{\alpha,2}(B_{2/3})$ and the validity of the embedding $\mathcal N^{\alpha,2}\hookrightarrow W^{\beta,2}\cap L^{2n/(n-2\beta)}$ for any $\beta\in (0,\alpha)$ (see \cite{nik}) implies
		\begin{equation}\label{e:Wp-estimate-1}
			\|W_p(\nabla v^m)\|_{L^{\frac{2n}{n-2\beta}}(B_{1/2}) }^2\le c\left(\left\||\nabla v^m|_p\right\|_{L^p(B)}^p+\|W_p(\nabla v^m)\|_{L^2(B)}^2+T_m\left\||\nabla v^m|_p\right\|_{L^{2q-p}(B)}^{2q-p}\right)
		\end{equation}
		\UUU for some constant~$c=c(n,p,q,\beta)>0.$ Using the equality $|W_p(z)|^2=|z|_p^p$ and~\eqref{eq:pequiv},~\eqref{e:Wp-estimate-1} becomes \EEE
		\begin{equation}\label{eq:cac}
			\left\||\nabla v^m|_p\right\|_{L^{\frac{np}{n-2\beta}}(B_{1/2})}^p\le c\left(\left\||\nabla v^m|_p\right\|_{L^p(B)}^p+T_m\left\||\nabla v^m|_p\right\|_{L^{2q-p}(B)}^{2q-p}\right),
		\end{equation}
        which is precisely~\eqref{e:claim-vm}.
		
        \noindent{\bf  Step 3.} We claim that, for a fixed $\beta\in (0,\alpha)$, there exists a constant $c_\ast=c_\ast(n,p,q,\beta)>0$ such that for every $1/2\le t<s\le 1$ the inequality
		\begin{equation}\label{eq:cac1}
			\left\||\nabla v^m|_p\right\|_{L^{\frac{np}{n-2\beta}}(B_t)}^p\le c_\ast\left(\frac{\left\||\nabla v^m|_p\right\|_{L^p(B_s)}^p}{(s-t)^{2\beta}}+\frac{T_m\left\||\nabla v^m|_p\right\|_{L^{2q-p}(B_s)}^{2q-p}}{(s-t)^{2\beta}}\right)
		\end{equation}
		holds. To this aim, let us fix $s,t$ and let $r:=(s-t)/8$; we consider a covering of~$B_t$ with balls $\left\{B_{r/2}(y_i)\right\}_{i\in I}$ with $y_i\in B_t$ for every $i$, where $\UUU \# I \EEE \le c(n)r^{-n}$ for some dimensional constant $c(n)>0$. We can assume that this family, as well as the family $\left\{B_r(y_i)\right\}_{i\in I}$, satisfy the finite intersection property: each ball of the family intersects at most $c'(n)$ other balls belonging to the family. Using \UUU again the rescaling argument~\eqref{e:rescaling}, \EEE we get
		\begin{equation*}
			\left\||\nabla v^m|_p\right\|_{L^{\frac{np}{n-2\beta}}(B_{r/2}(y_i))}^p\le \UUU {c}\EEE \left(\frac{\left\||\nabla v^m|_p\right\|_{L^p(B_r(y_i))}^p}{r^{2\beta}}+\frac{T_m\left\||\nabla v^m|_p\right\|_{L^{2q-p}(B_r(y_i))}^{2q-p}}{r^{2\beta}}\right).
		\end{equation*}
        for a positive constant $c=c(n,p,q,\beta)$. Summing over every ball of the family, using the finite intersection property and \eqref{eq:pequiv} we get \eqref{eq:cac1}.
		
        \noindent{\bf Step 4.} We now want to prove that, for every $\beta\in\left(n\left(\frac{q}{p}-1\right),\alpha\right)$, \UUU there exists a constant $c=c(n,p,q,\beta)>0$ \EEE such that
		\begin{equation}\label{eq:revholapp}
			\left\||\nabla v^m|_p\right\|_{L^{\frac{np}{n-2\beta}}(B_{1/2})}^p\le c\left\||\nabla v^m|_p\right\|_{L^p(B)}^p+ c \, T_m^{b_1}\left\||\nabla v^m|_p\right\|_{L^p(B)}^{pb_2}
		\end{equation}
		with $b_1,b_2$ as in \eqref{eq:bb}.
		
		Assume $q>p$, since if $q=p$ the previous inequality becomes \eqref{eq:cac}. \UUU Let us fix $\theta \in (0,1)$ \EEE such that
        \begin{equation*}
			\frac{1}{2q-p}=\frac{1-\theta}{p}+\frac{\theta(n-2\beta)}{np} \ \Longrightarrow  \ \theta=\frac{n(q-p)}{\beta(2q-p)}\,.
		\end{equation*}
        By interpolation inequality, we obtain 
		\begin{equation*}
			\left\||\nabla v^m|_p\right\|_{L^{2q-p}(B_s)}\le\left\||\nabla v^m|_p\right\|_{L^p(B_s)}^{1-\theta}\left\||\nabla v^m|_p\right\|_{L^{np/(n-2\beta)}(B_s)}^\theta
		\end{equation*}
		Using Young's inequality with exponents $\frac{p}{\theta(2q-p)}$ and $\frac{p}{p-(2q-p)\theta}$ we infer that, \UUU for every $\delta \in (0, 1),$ \EEE
		\begin{align*}
			\frac{ \UUU T_m \EEE\left\||\nabla v^m|_p\right\|_{L^{2q-p}(B_s)}^{2q-p}}{(s-t)^{2\beta}}&\le\frac{T_m}{(s-t)^{2\beta}}\left\||\nabla v^m|_p\right\|_{L^p(B_s)}^{(1-\theta)(2q-p)}\left\||\nabla v^m|_p\right\|_{L^{\frac{np}{n-2\beta}}(B_s)}^{\theta(2q-p)}\\
			&\le \UUU \delta \EEE \left\||\nabla v^m|_p\right\|_{L^{\frac{np}{n-2\beta}}(B_s)}^{p}+ \UUU c_\delta \EEE\left(\frac{T_m}{(s-t)^{2\beta}}\left\||\nabla v^m|_p\right\|^{p\frac{b_2}{b_1}}_{\UUU L^{p} (B_s) \EEE } \right)^{\UUU b_1 \EEE}.
		\end{align*}
         for some positive constant~$c_{\delta}$ depending on~$\delta,n,p,q,\beta$. We substitute the above \EEE inequality in~\eqref{eq:cac1} with the \UUU choice $\delta=\frac{1}{2c_\ast}$ and obtain
		\begin{equation*}
			\left\||\nabla v^m|_p\right\|_{L^{\frac{np}{n-2\beta}}(B_t)}^p\le\frac{1}{2}\left\||\nabla v^m|_p\right\|_{L^{\frac{np}{n-2\beta}}(B_s)}^p+ \UUU {c} \EEE \left(\frac{\left\||\nabla v^m|_p\right\|_{L^{p}(B)}^p}{(s-t)^{2\beta}}+\frac{T_m^{b_1}\left\||\nabla v^m|_p\right\|_{L^{p}(B)}^{pb_2}}{(s-t)^{2\beta b_1}}\right)
		\end{equation*}
        \UUU for a positive constant ${c} = c(n,p,q,\beta)$. \EEE Using Lemma \ref{lem:est} with $r=1/2$ and $R=1-\UUU \rho \EEE$ for some $\UUU \rho \EEE \in (0,1/2)$ and letting $\UUU \rho \EEE \to 0$ we conclude the proof of \eqref{eq:revholapp}.

        \noindent{\bf Step 5.} We argue as in the proofs of Proposition \ref{exist} and Proposition \ref{prop:approx}. By the minimality of $v^m$ we have
		\begin{align*}
			\int_B|\nabla v^m|_p^p\,dx&\le\int_B F_m(x,\nabla v^m)\,dx \le\int_B F_m(x,\nabla u^m)\,dx\\
            &
            \le \UUU \widetilde{\cF}_{0} \EEE [u^m]+\sigma_m\|\nabla u^m\|_{L^{2q-p}(B)}^{2q-p}\,.
		\end{align*}
		By the definition of $u^m$ and $\sigma_m$, the sequence $v^m$ is bounded in $W^{1,p}(B)$ and, up to a subsequence, $\nabla v^m$ converges weakly to $\nabla w$ for some $w\in W^{1,p}(B)$. Since, \UUU by construction, \EEE $\sigma_m\|\nabla u_m\|_{L^{2q-p}(B)}^{2q-p}\to 0$, we have
		\begin{equation*}
			\UUU \widetilde{\cF}_{0} [w] \le \liminf_{m\to\infty}\fF_{m} [v^m]\le  \limsup_{m\to\infty}\fF_{m} [v^m] \le \widetilde{\cF}_{0}[\tilde{U}]\,. \EEE
		\end{equation*}
		\UUU By the minimality of $\tilde{U}$ and the strict convexity of $\widetilde{\cF}_{0}$ \EEE we obtain $w=\tilde{U}$. In particular
		\begin{align*}
			\UUU \widetilde{\cF}_{0} \EEE [\tilde{U}] =\lim_{m\to\infty}\, \fF_{m} [v^{m}] 
		\end{align*}
		and this implies that 
    	\begin{equation*}
			\lim_{m\to\infty} \int_B|\nabla v^m|_p^p\,dx = \int_B|\nabla\tilde{U}|_p^p\,dx\,.
		\end{equation*}
		Thus,  we can pass to the limit in \eqref{eq:revholapp} to obtain \eqref{eq:prerh}, concluding the proof of \eqref{eq:rh2}.

\end{proof}


\section{Local energy estimates}
\label{s:estimates}

\subsection{Caccioppoli-type inequalities}
\label{s:caccioppoli}

\UUU For $\varepsilon \in (0, \varepsilon_0]$ we consider from now on the minimizer~$u^{\varepsilon}$ of~\eqref{eq:minepsilon}. The corresponding \EEE Euler-Lagrange equation reads as
\begin{equation}\label{eq:ELA}
	\sum_{i=1}^n\int \left(p|u^{\varepsilon}_{x_i}|^{p-2}+q\sigma_\varepsilon |\nabla u^{\varepsilon}|^{q-2}+qa |u^{\varepsilon}_{x_i}|^{q-2}\right)u^{\varepsilon}_{x_i}\varphi_{x_i}\,dx=0,\ \ \ \ \forall\varphi\in W^{1,q}_0(B_R),
\end{equation}
where we denoted $\frac{\varepsilon}{1+\|\nabla U^\varepsilon\|_{L^q(B_R)}^q}$ by $\sigma_\varepsilon.$
For every $j=1,\cdots n$, we can use test functions $\varphi_{x_j}$, with $\varphi\in C^2$ compactly
supported in $B_R,$  and integrate by parts to obtain \GGG
\begin{equation}\label{eq:ELAD}
\begin{split}
&\sum_{i=1}^n\int  \left[p(p-1)|u^{\varepsilon}_{x_i}|^{p-2}+q\sigma_\varepsilon |\nabla u^{\varepsilon}|^{q-2}+q(q-1)a|u^{\varepsilon}_{x_i}|^{q-2}\right]u^{\varepsilon}_{x_ix_j}\varphi_{x_i}\,dx  \\
&+q(q-2)\sigma_\varepsilon \int|\nabla u^\varepsilon|^{q-2}\langle \frac{\nabla u^\varepsilon\otimes\nabla u^\varepsilon}{|\nabla u^\varepsilon|^2}\nabla u^\varepsilon_{x_j},\nabla\varphi\rangle\,dx+ \UUU q \EEE \sum_{i=1}^n\int \UUU a_{x_j} \EEE |u^{\varepsilon}_{x_i}|^{q-2}u^{\varepsilon}_{x_i}\varphi_{x_i}\,dx=0. 
\end{split}
\end{equation}
\UUU From now on, we denote by $B$ the (generic) open ball $B_R$ in~$\R^{n}$. We further use the notation
\begin{equation}\label{e:Miepsilon}
	\cM^{\varepsilon}_i(\nabla u^{\varepsilon}):= p(p-1)|u^{\varepsilon}_{x_i}|^{p-2}+\GGG q\sigma_\varepsilon|\nabla u^{\varepsilon}|^{q-2}\UUU+q(q-1)a |u^{\varepsilon}_{x_i}|^{q-2}
\end{equation}
for $i = 1, \ldots, n$.\EEE

\begin{prop}\label{prop:cac}
	\UUU For $\varepsilon \in (0, \varepsilon_0)$, let $u^\varepsilon$ be the minimizer of \eqref{eq:minepsilon} and assume~$a(\cdot)$ satisfies~\eqref{hpsobolev}. Then, there exists a positive constant $C=C(p,q,n)>0$ such that for every $\Lambda \in C^1 (\R)$ monotone non-decreasing and every Lipschitz function $\eta$ compactly supported in $B$ it holds \EEE
	\begin{equation}\label{eq:caca}
	\begin{split}
			\sum_{i=1}^n \int\cM^{\varepsilon}_i(\nabla u^{\varepsilon}) \Lambda'(u^\varepsilon_{x_j})\left(u^\varepsilon_{x_ix_j}\right)^2\eta^2\,dx \le& \ C\sum_{i=1}^n\int \UUU \cM^{\varepsilon}_{i} (\nabla u^{\varepsilon}) \EEE 
            \frac{\Lambda^2(u^\varepsilon_{x_j})}{\Lambda'(u^\varepsilon_{x_j})}\GGG |\nabla\eta|^2 \EEE\,dx \\
			& +C\sum_{i=1}^n \int a_{x_j}^2|u^\varepsilon_{x_i}|^{2q-p}\Lambda'(u^\varepsilon_{x_j})\eta^2\,dx. 
	\end{split}
	\end{equation}
\end{prop}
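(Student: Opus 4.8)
The plan is to insert into the differentiated Euler--Lagrange equation~\eqref{eq:ELAD} the test function
\[
\varphi=\eta^{2}\Lambda(u^{\varepsilon}_{x_{j}}),
\]
and then to absorb the resulting error terms by weighted Young inequalities. First one checks that $\varphi$ is admissible: since $u^{\varepsilon}\in W^{1,\infty}_{loc}(B)\cap W^{2,2}_{loc}(B)$ and $\eta$ is Lipschitz with compact support, $\varphi\in W^{1,2}(B)$ has compact support, with $\nabla\varphi=2\eta\,\Lambda(u^{\varepsilon}_{x_{j}})\nabla\eta+\eta^{2}\Lambda'(u^{\varepsilon}_{x_{j}})\nabla u^{\varepsilon}_{x_{j}}$; approximating $\varphi$ in $W^{1,2}$ by $C^{2}_{c}$ functions, and using that on $\operatorname{supp}\eta$ the coefficients $\cM^{\varepsilon}_{i}(\nabla u^{\varepsilon})$ are bounded while $a_{x_{j}}\in L^{2}_{loc}$ by~\eqref{hpsobolev}, extends the validity of~\eqref{eq:ELAD} to this $\varphi$.

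Carrying out the substitution and expanding $\varphi_{x_i}$, the first sum in~\eqref{eq:ELAD} yields exactly $\sum_{i}\int\cM^{\varepsilon}_{i}(\nabla u^{\varepsilon})\Lambda'(u^{\varepsilon}_{x_{j}})(u^{\varepsilon}_{x_{i}x_{j}})^{2}\eta^{2}\,dx$, the left-hand side of~\eqref{eq:caca}, plus the mixed term $2\sum_{i}\int\cM^{\varepsilon}_{i}(\nabla u^{\varepsilon})\,u^{\varepsilon}_{x_{i}x_{j}}\,\eta\,\eta_{x_{i}}\Lambda(u^{\varepsilon}_{x_{j}})\,dx$. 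The $\sigma_{\varepsilon}$-term splits into the nonnegative contribution $q(q-2)\sigma_{\varepsilon}\int|\nabla u^{\varepsilon}|^{q-2}\frac{(\nabla u^{\varepsilon}\cdot\nabla u^{\varepsilon}_{x_{j}})^{2}}{|\nabla u^{\varepsilon}|^{2}}\Lambda'(u^{\varepsilon}_{x_{j}})\eta^{2}\,dx$ (here $q\ge 2$ is used), which we keep on the left, plus a mixed contribution carrying one factor $\nabla\eta$; the $a_{x_{j}}$-term splits into one piece carrying $\nabla\eta$ and one carrying $u^{\varepsilon}_{x_{i}x_{j}}$.

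It then remains to estimate the errors. For the mixed term from the first sum, a weighted Young inequality (valid since $\cM^{\varepsilon}_{i}\ge 0$ and $\Lambda'\ge 0$) bounds it by $\delta\sum_{i}\int\cM^{\varepsilon}_{i}\Lambda'(u^{\varepsilon}_{x_{j}})(u^{\varepsilon}_{x_{i}x_{j}})^{2}\eta^{2}+C_{\delta}\sum_{i}\int\cM^{\varepsilon}_{i}\frac{\Lambda^{2}(u^{\varepsilon}_{x_{j}})}{\Lambda'(u^{\varepsilon}_{x_{j}})}|\nabla\eta|^{2}$. The mixed $\sigma_{\varepsilon}$-part is handled by Young against the nonnegative $\sigma_{\varepsilon}$-term, after $|\nabla u^{\varepsilon}|^{-2}(\nabla u^{\varepsilon}\cdot\nabla\eta)^{2}\le|\nabla\eta|^{2}$, and then $q\sigma_{\varepsilon}|\nabla u^{\varepsilon}|^{q-2}\le\cM^{\varepsilon}_{i}(\nabla u^{\varepsilon})$. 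For the $a_{x_{j}}$-pieces one writes $q-1=\tfrac{2q-p}{2}+\tfrac{p-2}{2}$ and applies Young so that the factor $|a_{x_{j}}|\,|u^{\varepsilon}_{x_{i}}|^{(2q-p)/2}$ is collected into $a_{x_{j}}^{2}|u^{\varepsilon}_{x_{i}}|^{2q-p}\Lambda'(u^{\varepsilon}_{x_{j}})\eta^{2}$, the second term on the right of~\eqref{eq:caca}, while the complementary factor $|u^{\varepsilon}_{x_{i}}|^{(p-2)/2}$ is controlled via $p(p-1)|u^{\varepsilon}_{x_{i}}|^{p-2}\le\cM^{\varepsilon}_{i}(\nabla u^{\varepsilon})$ (using $p\ge 2$), producing either a further $\cM^{\varepsilon}_{i}\tfrac{\Lambda^{2}}{\Lambda'}|\nabla\eta|^{2}$ term or a term absorbed into the left-hand side. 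Choosing $\delta$ small, and noting that the left-hand side of~\eqref{eq:caca} is finite thanks to the local $W^{2,2}\cap W^{1,\infty}$ regularity of $u^{\varepsilon}$ and the continuity of $\Lambda'$, gives~\eqref{eq:caca}. The step requiring the most care is the bookkeeping of the non-sign-definite $\sigma_{\varepsilon}$ cross term and the exponent split $q-1=\tfrac{2q-p}{2}+\tfrac{p-2}{2}$, which is exactly what pairs the ``bad'' weight $a_{x_{j}}^{2}|u^{\varepsilon}_{x_{i}}|^{2q-p}$ with the degeneracy encoded in $\cM^{\varepsilon}_{i}$.
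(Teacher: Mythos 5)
Your proposal is correct and follows essentially the same route as the paper: testing the differentiated Euler--Lagrange equation \eqref{eq:ELAD} with $\varphi=\eta^{2}\Lambda(u^{\varepsilon}_{x_j})$, keeping the nonnegative $\sigma_\varepsilon$-term, splitting $q-1=\tfrac{2q-p}{2}+\tfrac{p-2}{2}$ in the $a_{x_j}$-terms, using $p(p-1)|u^{\varepsilon}_{x_i}|^{p-2}\le\cM^{\varepsilon}_i(\nabla u^{\varepsilon})$ and $q\sigma_\varepsilon|\nabla u^{\varepsilon}|^{q-2}\le\cM^{\varepsilon}_i(\nabla u^{\varepsilon})$, and absorbing via weighted Young inequalities with $\delta$ small. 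The only cosmetic difference is that you absorb the $\sigma_\varepsilon$ cross term against the nonnegative $\sigma_\varepsilon$ contribution on the left, while the paper bounds it by the $\cM^{\varepsilon}_i$-mixed term before applying Young; the two bookkeepings are equivalent.
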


\begin{proof}
	\UUU Let us drop the dependence on~$\varepsilon$ in $u^\varepsilon$ and $\cM^\varepsilon_i.$ We test \EEE equation~\eqref{eq:ELAD} with test function $\varphi=\Lambda(u_{x_j})\eta^2$ with $\eta$ \UUU as in the statement of the proposition: \EEE
	\begin{align}\label{e:1000}
		&\sum_{i=1}^n\int \UUU \cM_{i} (\nabla u) \EEE \Lambda'(u_{x_j})u_{x_ix_j}^2\eta^2\,dx \GGG+q(q-2)\sigma_\varepsilon\int|\nabla u|^{q-2}\frac{\langle \nabla u,\nabla u_{x_j}\rangle^2}{|\nabla u|^2} \, \Lambda' (u_{x_{j}}) \, \eta^2\,dx\\		
		&\EEE =-2\sum_{i=1}^n\int \UUU \cM_{i} (\nabla u) \EEE \Lambda (u_{x_j}) u_{x_ix_j}\eta\eta_{x_i}\, dx\GGG -2q(q-2)\sigma_\varepsilon\int|\nabla u|^{q-2}\langle \frac{\nabla u\otimes\nabla u}{|\nabla u|^2}\nabla u_{x_j},\nabla\eta\rangle\eta\Lambda(u_{x_j})\,dx \nonumber\\
		&  -q\sum_{i=1}^n \int a_{x_j}|u_{x_i}|^{q-2}u_{x_i}\Lambda'(u_{x_j})u_{x_ix_j}\eta^2\,dx  -2q\sum_{i=1}^n \int a_{x_j}|u_{x_i}|^{q-2}u_{x_i}\Lambda (u_{x_j})\eta \eta_{x_i}\,dx. \nonumber 
	\end{align}
	\GGG We first observe that the second term on left hand side is nonnegative. \EEE
	Using a weighted Young's inequality, \UUU we estimate the third term on the right-hand side of~\eqref{e:1000}: \EEE
	\begin{align*}
		-\sum_{i=1}^n & \int a_{x_j}|u_{x_i}|^{q-2}u_{x_i}u_{x_ix_j}\Lambda'(u_{x_j})\eta^2\,dx \le\sum_{i=1}^n\int|a_{x_j}||u_{x_i}|^{q-1}|u_{x_ix_j}|\Lambda'(u_{x_j})\eta^2\,dx\\
		&\le\delta\sum_{i=1}^n\int p(p-1)|u_{x_i}|^{p-2}u_{x_ix_j}^2\Lambda'(u_{x_j})\eta^2\,dx + C_\delta \sum_{i=1}^n\int a_{x_j}^2|u_{x_i}|^{2q-p}\Lambda'(u_{x_j})\eta^2\,dx\\
		&\le\delta\sum_{i=1}^n\int \UUU \cM_{i} (\nabla u) \EEE u_{x_ix_j}^2\Lambda'(u_{x_j})\eta^2\,dx+C_\delta \sum_{i=1}^n\int a_{x_j}^2|u_{x_i}|^{2q-p}\Lambda'(u_{x_j})\eta^2\,dx.
	\end{align*}
	\GGG We treat the first two terms on the right hand side: \EEE by Young's inequality, for every $\delta >0$, we have 
	\begin{align*}
		-2\sum_{i=1}^n \int &  \UUU \cM_{i} (\nabla u) \EEE \Lambda(u_{x_j})u_{x_ix_j}\eta\eta_{x_i}\, dx \GGG -2q(q-2)\sigma_\varepsilon\int|\nabla u|^{q-2}\langle \frac{\nabla u\otimes\nabla u}{|\nabla u|^2}\nabla u_{x_j},\nabla\eta\rangle\eta\Lambda(u_{x_j})\,dx\\
		&\GGG \le C\sum_{i=1}^n\int\cM_i(\nabla u)|\Lambda(u_{x_j})||u_{x_ix_j}||\eta||\nabla \eta|\,dx\\
		\le\delta&\sum_{i=1}^n\int \cM_{i} (\nabla u) \Lambda'(u_{x_j})u_{x_ix_j}^2\eta^2\,dx + C_\delta\sum_{i=1}^n\int \cM_{i} (\nabla u)\frac{\Lambda^2(u_{x_j})}{\Lambda'(u_{x_j})}|\nabla\eta|^2\,dx. 
	\end{align*}
	Similarly we get
	\begin{align*}
		-2q&\sum_{i=1}^n\int a_{x_j}|u_{x_i}|^{q-2}u_{x_i}\Lambda (u_{x_j})\eta\eta_{x_i}\,dx\\
		& \lesssim\sum_{i=1}^n\int  a_{x_j}^2|u_{x_i}|^{2q-p}\Lambda'(u_{x_j})\eta^2\,dx+\sum_{i=1}^n\int p(p-1)|u_{x_i}|^{p-2}\frac{\Lambda^2(u_{x_j})}{\Lambda'(u_{x_j})}\eta_{x_i}^2\,dx\\
		& \lesssim\sum_{i=1}^n \int a_{x_j}^2|u_{x_i}|^{2q-p}\Lambda'(u_{x_j})\eta^2\,dx+\sum_{i=1}^n\int \UUU \cM_{i} (\nabla u) \EEE \frac{\Lambda^2(u_{x_j})}{\Lambda'(u_{x_j})} \UUU \eta_{x_{i}}^2 \EEE \,dx.
	\end{align*}
	Choosing $\delta$ small enough and combining the above inequalities yield the desired inequality.
\end{proof}
{\GGG As for the standard orthotropic functional a key role is played by the following sophisticated
Caccioppoli-type inequality for the gradient, reminiscent of \cite[Proposition 3.2]{bblv}.}
\begin{prop}
\label{p:2.2}
\UUU For $\varepsilon \in (0, \varepsilon_0]$, let $u^\varepsilon$  be the minimizer of \eqref{eq:minepsilon} and assume that $a(\cdot)$ satisfies~\eqref{hpsobolev}. Then, for every $\theta \in (0, 2),$ $C_{0} >0,$ every $\Phi \in C^{1}(\R)$ monotone non-decreasing, \UUU  $\Psi \in C^{2} (\R^{+}; \R^{+})$ monotone non-decreasing, convex, with $\Psi(0) = 0$, and every Lipschitz function $\eta$ compactly supported in $B$, \GGG there exists a constant $C=C(n,p,q)>1$ such that \EEE 
	\begin{equation}\label{e:thesis-4.2}
	\begin{split}
		\sum_{i=1}^n\int &  \cM^{\varepsilon}_i(\nabla u^{\varepsilon} ) \RRR \Phi' \UUU(u^{\varepsilon}_{x_j})\Psi( ( u^{\varepsilon}_{x_k})^2) ( u^{\varepsilon}_{x_ix_j})^2 \eta^2  dx 
        \\
        &
        \le C_0 \sum_{i=1}^n \int \cM^{\varepsilon}_i ( \nabla u^{\varepsilon})\Phi^2(u^{\varepsilon}_{x_j}) \left( \Psi'( (u^{\varepsilon}_{x_k})^2 ) \right)^\theta (u^{\varepsilon}_{x_ix_j})^2 \eta^2 dx
        \\
		&
        \quad + C ( 1 + C_0^{-1} ) \sum_{i=1}^n \int \cM^{\varepsilon}_i(\nabla u^{\varepsilon})\left(\Psi(( u^{\varepsilon}_{x_k})^2 ) \right)^{2-\theta}|u^{\varepsilon}_{x_k}|^{2\theta}|\nabla \eta|^2dx
        \\
		&
        \quad + C \sum_{i=1}^n\int \cM^{\varepsilon}_i(\nabla u^{\varepsilon})\frac{\Phi^2(u^{\varepsilon}_{x_j})}{\Phi'(u^{\varepsilon}_{x_j})}\Psi( (u^{\varepsilon}_{x_k})^2)|\nabla \eta|^2  dx
        \\
        &
        \quad + C \sum_{i=1}^n\int |\nabla a|^2|u^{\varepsilon}_{x_i}|^{2q-p}\Phi'(u^{\varepsilon}_{x_j})\Psi( (u^{\varepsilon}_{x_k})^2)\eta^2 dx\\
		&
        \quad + C \sum_{i=1}^n\int |\nabla a |^2|u^{\varepsilon}_{x_i}|^{2q-p}\Phi^2(u^{\varepsilon}_{x_j})\left(\Psi'( (u^{\varepsilon}_{x_k})^2)\right)^\theta\eta^2 dx\\
		&
        \quad + C(1+C_0^{-1})\sum_{i=1}^n\int |\nabla a|^2|u^{\varepsilon}_{x_i}|^{2q-p}\Upsilon((u^{\varepsilon}_{x_k})^2) (u^{\varepsilon}_{x_k})^2\eta^2 dx,
	\end{split}
	\end{equation}
    \UUU where we have set for $t \in [0, +\infty)$
    \begin{equation}\label{e:Upsilon}
        \Upsilon(t)=4\left(\Psi'(t)\right)^{2-\theta}+\left(\Psi(t)\right)^{1-\frac{\theta}{2}}\left(\Psi'(t)\right)^{-\frac{\theta}{2}}|\sqrt{t}|^{\GGG \theta-2}\left(2\Psi'(t)+\Psi''(t) {\GGG t} \right).
    \end{equation} \EEE
\end{prop}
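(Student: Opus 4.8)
Following the scheme of~\cite[Proposition~3.2]{bblv}, the plan is to test the once‑differentiated Euler--Lagrange equation~\eqref{eq:ELAD} — legitimate since $u^{\varepsilon}\in W^{1,\infty}_{loc}(B)\cap W^{2,2}_{loc}(B)$ and the approximating problem is uniformly elliptic — with the compactly supported, admissible test function $\varphi=\Phi(u^{\varepsilon}_{x_j})\,\Psi((u^{\varepsilon}_{x_k})^{2})\,\eta^{2}$. Writing $u$ and $\cM_i$ for $u^{\varepsilon}$ and $\cM^{\varepsilon}_i$ (as in the proof of Proposition~\ref{prop:cac}), the product and chain rules give
\[
\varphi_{x_i}=\Phi'(u_{x_j})\,\Psi((u_{x_k})^{2})\,u_{x_ix_j}\,\eta^{2}+2\,\Phi(u_{x_j})\,\Psi'((u_{x_k})^{2})\,u_{x_k}\,u_{x_ix_k}\,\eta^{2}+2\,\Phi(u_{x_j})\,\Psi((u_{x_k})^{2})\,\eta\,\eta_{x_i}.
\]
Contracting the leading sum of~\eqref{eq:ELAD} with the first piece of $\varphi_{x_i}$ produces exactly the left‑hand side of~\eqref{e:thesis-4.2}, while the $\sigma_{\varepsilon}$‑term of~\eqref{eq:ELAD} contracted with the first piece of $\nabla\varphi$ yields a nonnegative quantity on the left‑hand side (recall $q\ge2$), which is simply discarded. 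Everything else is moved to the right‑hand side and estimated by weighted Young's inequalities.

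The routine part mirrors the proof of Proposition~\ref{prop:cac}. The $\nabla\eta$‑contributions (from the third piece of $\varphi_{x_i}$ and from the lower‑order $\sigma_{\varepsilon}$‑pieces) are split by Young's inequality into a Hessian part $\le\delta\sum_i\int\cM_i\Phi'(u_{x_j})\Psi((u_{x_k})^{2})(u_{x_ix_j})^{2}\eta^{2}$, reabsorbed into the left‑hand side for $\delta$ small, plus a remainder bounded by the third term of~\eqref{e:thesis-4.2}. The three contributions of the $\nabla a$‑term of~\eqref{eq:ELAD} (contracting $a_{x_j}|u_{x_i}|^{q-2}u_{x_i}$ with the three pieces of $\varphi_{x_i}$) are handled by Young's inequality, reabsorbing the Hessian parts and using $a_{x_j}^{2}\le|\nabla a|^{2}$ and $|u_{x_i}|^{2q-2}/\cM_i\lesssim|u_{x_i}|^{2q-p}$; they yield the fourth and fifth terms of~\eqref{e:thesis-4.2} (with weights $\Phi'\Psi$ and $\Phi^{2}(\Psi')^{\theta}$), together with a leftover of Hessian type in the direction $x_k$.

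The crux is the cross term $2\sum_{i}\int\cM_i\,\Phi(u_{x_j})\,\Psi'((u_{x_k})^{2})\,u_{x_k}\,u_{x_ix_j}\,u_{x_ix_k}\,\eta^{2}\,dx$, which couples the two directions through $u_{x_ix_j}u_{x_ix_k}$. On $\{\Phi\neq0\}$ (elsewhere the integrand vanishes), Young's inequality with the parameter $\theta$ gives the splitting $2|u_{x_ix_j}|\,|u_{x_ix_k}|\,|\Phi\Psi'u_{x_k}|\le C_{0}\,\Phi^{2}(\Psi')^{\theta}(u_{x_ix_j})^{2}+C_{0}^{-1}(\Psi')^{2-\theta}(u_{x_k})^{2}(u_{x_ix_k})^{2}$. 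After contraction with $\cM_i\eta^{2}$ and summation over $i$, the first piece is precisely the $C_{0}$‑term of~\eqref{e:thesis-4.2} — it is \emph{not} reabsorbed, because in the applications $\Phi$ and $\Psi$ are powers chosen so that this term is subordinate in the iteration. The second piece still carries $(u_{x_ix_k})^{2}$; adding it to the $x_k$‑Hessian leftover of the $\nabla a$‑term, we identify the sum as $(1+C_{0}^{-1})\sum_{i}\int\cM_i\Lambda'(u_{x_k})(u_{x_ix_k})^{2}\eta^{2}$ with $\Lambda'(t):=(\Psi'(t^{2}))^{2-\theta}t^{2}$ and $\Lambda(t):=\int_{0}^{t}(\Psi'(s^{2}))^{2-\theta}s^{2}\,ds$, a $C^{1}$ monotone non‑decreasing function (here we use $\Psi\in C^{2}$ and $\theta<2$). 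Feeding this into the basic Caccioppoli inequality~\eqref{eq:caca} with $j$ replaced by $k$ bounds it by $C(1+C_{0}^{-1})$ times $\sum_{i}\int\cM_i\frac{\Lambda^{2}(u_{x_k})}{\Lambda'(u_{x_k})}|\nabla\eta|^{2}\,dx+\sum_{i}\int a_{x_k}^{2}|u_{x_i}|^{2q-p}\Lambda'(u_{x_k})\eta^{2}\,dx$.

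It remains to match these two quantities with the last terms of~\eqref{e:thesis-4.2}. The change of variables $\tau=s^{2}$, the monotonicity of $\Psi'$, and Hölder's inequality with exponents $\tfrac{2}{2-\theta}$ and $\tfrac{2}{\theta}$ give $|\Lambda(t)|\le\tfrac12(\Psi'(t^{2})\Psi(t^{2}))^{1-\theta/2}|t|^{1+\theta}$, hence $\Lambda^{2}(t)/\Lambda'(t)\le\tfrac14(\Psi(t^{2}))^{2-\theta}|t|^{2\theta}$ with a constant independent of $\Psi$ and of $\theta$; this produces the second term of~\eqref{e:thesis-4.2}. Using $a_{x_k}^{2}\le|\nabla a|^{2}$ and $\Lambda'(t)=(\Psi'(t^{2}))^{2-\theta}t^{2}\le\Upsilon(t^{2})t^{2}$ — and, more generally, checking that every error weight arising along the way (including those introduced by the convexity estimate $\Psi(t)\le t\Psi'(t)$, valid since $\Psi$ is convex with $\Psi(0)=0$, and by the derivative of $\Lambda'$, which brings in $\Psi''$) is majorised by $\Upsilon((u_{x_k})^{2})(u_{x_k})^{2}$ and by $\Phi^{2}(\Psi')^{\theta}$ — one recovers the remaining $|\nabla a|^{2}|u_{x_i}|^{2q-p}$‑terms; this is exactly what dictates the (rather intricate) shape of $\Upsilon$ in~\eqref{e:Upsilon}. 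Collecting all estimates and choosing $\delta$ small enough to reabsorb the Hessian contributions into the left‑hand side gives~\eqref{e:thesis-4.2}. The main obstacle is precisely this treatment of the cross term via~\eqref{eq:caca}: one must pick $\Lambda$ so that $\Lambda'$ matches the leftover weight identically, and then obtain the estimate for $\Lambda^{2}/\Lambda'$ with a constant depending only on $n,p,q$, which is the very reason the auxiliary function $\Upsilon$ has to be introduced.
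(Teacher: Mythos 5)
Your argument is correct and follows the paper's overall strategy: test \eqref{eq:ELAD} with $\Phi(u_{x_j})\Psi((u_{x_k})^2)\eta^2$, treat the $\nabla\eta$- and $\nabla a$-contributions by weighted Young inequalities, and reduce the leftover Hessian term in the $x_k$ direction to Proposition~\ref{prop:cac}. The genuine variation is the auxiliary function fed into \eqref{eq:caca}: the paper takes $\Lambda=\Gamma\Gamma'$ with $\Gamma(t)=\int_0^{t^2}(\Psi'(\tau))^{1-\theta/2}\,d\tau$, so that $\Lambda'\ge 4t^2(\Psi'(t^2))^{2-\theta}$ dominates the leftover weight and $\Lambda^2/\Lambda'\le\Gamma^2$ is then controlled via Jensen, whereas you take the exact primitive $\Lambda(t)=\int_0^t(\Psi'(s^2))^{2-\theta}s^2\,ds$, whose derivative coincides with the leftover weight, and you bound $\Lambda$ using the monotonicity of $\Psi'$ plus H\"older with exponents $\tfrac{2}{2-\theta}$ and $\tfrac{2}{\theta}$; this computation is sound and indeed yields $\Lambda^2(t)/\Lambda'(t)\le\tfrac14(\Psi(t^2))^{2-\theta}|t|^{2\theta}$ and $\Lambda'(t)=(\Psi'(t^2))^{2-\theta}t^2\le\Upsilon(t^2)\,t^2$, so every error term lands on the right-hand side of \eqref{e:thesis-4.2} (your route only needs the first summand of $\Upsilon$; the second summand in \eqref{e:Upsilon} is forced by the paper's $\Gamma\Gamma''$ contribution). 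A second, cosmetic difference: you split the cross term by a pointwise Young inequality with parameter $C_0$, while the paper uses Cauchy--Schwarz at the integral level followed by a four-term elementary inequality to distribute constants; the bookkeeping of $C_0$ and $C_0^{-1}$ comes out the same. One small gloss: besides the cross term weighted by $\cM^\varepsilon_i$, equation \eqref{eq:ELAD} also produces the rank-one cross term $-2q(q-2)\sigma_\varepsilon\int|\nabla u|^{q-2}\big\langle\tfrac{\nabla u\otimes\nabla u}{|\nabla u|^2}\nabla u_{x_j},\nabla u_{x_k}\big\rangle\Phi(u_{x_j})\Psi'((u_{x_k})^2)u_{x_k}\eta^2\,dx$ (term (III) in the paper), which your crux paragraph does not mention explicitly; it is harmless, since $\tfrac{\nabla u\otimes\nabla u}{|\nabla u|^2}\le\mathrm{Id}$ lets it be absorbed into the same Young splitting through the $q\sigma_\varepsilon|\nabla u|^{q-2}$ part of $\sum_i\cM^\varepsilon_i$, exactly as the paper does by carrying the factor $2(q-1)$.
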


\begin{proof}
	\UUU Let us drop the index $\varepsilon$ from $u^\varepsilon$ and $\cM_i^\varepsilon$. We \EEE test \eqref{eq:ELAD} with $\varphi=  \Phi(u_{x_j}) \Psi(u_{x_k}^2)\eta^2$:
	\begin{equation}\label{e:1200}
	\begin{split}
		\sum_{i=1}^n&\int \cM_i(\nabla u)\Phi'(u_{x_j})\Psi(u_{x_k}^2)u_{x_ix_j}^2\eta^2\,dx\\
		&\GGG +q(q-2)\sigma_\varepsilon\int |\nabla u|^{q-2}\frac{\langle \nabla u,\nabla u_{x_j}\rangle^2}{|\nabla u|^2}\Phi'(u_{x_j})\Psi(u_{x_k}^2)\eta^2 dx\\
		&=-2\sum_{i=1}^n \int \cM_i(\nabla u)\Phi(u_{x_j})\Psi'(u_{x_k}^2)u_{x_k}u_{x_ix_j}u_{x_ix_k}\eta^2\,dx \\
		&-2\sum_{i=1}^n\int \cM_i(\nabla u)\Phi(u_{x_j})\Psi(u_{x_k}^2)u_{x_ix_j}\eta\eta_{x_i}\,dx \\
		&\GGG -2q(q-2)\sigma_\varepsilon\int |\nabla u|^{q-2}\langle\frac{\nabla u\otimes\nabla u}{|\nabla u|^2}\nabla u_{x_j}, \nabla u_{x_k}\rangle\Phi(u_{x_j})\Psi'(u^2_{x_k})u_{x_k}\eta^2dx\\
		&\GGG -2q(q-2)\sigma_\varepsilon\int |\nabla u|^{q-2}\langle\frac{\nabla u\otimes\nabla u}{|\nabla u|^2}\nabla u_{x_j}, \nabla \eta\rangle\Phi(u_{x_j})\Psi(u^2_{x_k})\eta\,dx\\
		& \quad-q\sum_{i=1}^n\int a_{x_j}|u_{x_i}|^{q-2}u_{x_i}\Phi'(u_{x_j})\Psi(u_{x_k}^2)u_{x_ix_j}\eta^2\,dx \\
		& \quad -2q\sum_{i=1}^n\int a_{x_j}|u_{x_i}|^{q-2}u_{x_i}\Phi(u_{x_j})\Psi'(u_{x_k}^2)u_{x_k}u_{x_ix_k}\eta^2\,dx\\
		& \quad -2q\sum_{i=1}^n\int a_{x_j}|u_{x_i}|^{q-2}u_{x_i}\Phi(u_{x_j})\Psi(u_{x_k})\eta\eta_{x_i}\,dx\\
		&=(\mathrm{I})+(\mathrm{II})+(\mathrm{III})+(\mathrm{IV})+(\mathrm{V})+\GGG (\mathrm{VI})+(\mathrm{VII}).
	\end{split}
	\end{equation}
	Let us estimate these terms. Using Young's inequality we have
	\begin{equation}\label{e:1201}
	\begin{split}
		(\mathrm{II})+\GGG (\mathrm{IV})\le& \GGG 2(q-1)\sum_{i=1}^n\int\cM_i(\nabla u)|\Phi(u_{x_j})|\Psi(u^2_{x_k})|u_{x_ix_j}||\eta||\nabla\eta|\,dx\\
		&\le\delta \sum_{i=1}^n\int \cM_i(\nabla u)\Phi'(u_{x_j})\Psi(u_{x_k}^2)u_{x_ix_j}^2\eta^2\,dx\\
		& +\frac{4(q-1)^2}{\delta}\sum_{i=1}^n\int \cM_i(\nabla u)\frac{\Phi^2(u_{x_j})}{\Phi'(u_{x_j})}\Psi(u_{x_k}^2)|\nabla\eta|^2\,dx,
		\end{split}
	\end{equation}
	for some small $\delta\in (0,1/10)$. Similarly we have
	\begin{equation}\label{e:1202}
	\begin{split}
		(\mathrm{V}) \le& \ \delta\sum_{i=1}^n\int \cM_i(\nabla u)\Phi'(u_{x_j})\Psi(u_{x_k}^2)u_{x_ix_j}^2\eta^2\,dx\\
		& + \frac{4q^2}{p(p-1)\delta}
        \sum_{i=1}^n\int  a_{x_j}^2|u_{x_i}|^{2q-p}\Phi'(u_{x_j})\Psi(u_{x_k}^2)\eta^2\,dx,
        \end{split}
        \end{equation}
        \begin{equation}\label{e:1203}
        \begin{split}
		(\mathrm{VII})\le&\ \sum_{i=1}^n\int \cM_i(\nabla u)\frac{\Phi^2(u_{x_j})}{\Phi'(u_{x_j})}\Psi(u_{x_k}^2)\eta_{x_i}^2\,dx\\
		& + \frac{q^2}{p(p-1)}
        \sum_{i=1}^n\int a_{x_j}^2|u_{x_i}|^{2q-p}\Phi'(u_{x_j})\Psi(u_{x_k}^2)\eta^2\,dx.
	\end{split}
	\end{equation}
     To deal with the remaining terms, \UUU for any $\theta\in(0,2)$ \EEE we write $\Psi'(u_{x_k}^2)=\left(\Psi'(u_{x_k}^2)\right)^{\frac{\theta}{2}}\left(\Psi'(u_{x_k}^2)\right)^{1-\frac{\theta}{2}}$ and use Cauchy-Schwartz inequality to obtain
	\begin{equation}\label{e:I-100}
	\begin{split}
		(\mathrm{I})\GGG + (\mathrm{III})\le & \ 2(q-1)\left(\sum_{i=1}^n\int \cM_i(\nabla u)\Phi^2(u_{x_j})\left(\Psi'(u_{x_k}^2)\right)^\theta u_{x_ix_j}^2\eta^2\,dx\right)^{\frac{1}{2}}\\
		&\times\left(\sum_{i=1}^n\int \cM_i(\nabla u)\left(\Psi'(u_{x_k}^2)\right)^{2-\theta}u_{x_k}^2u_{x_ix_k}^2\eta^2\,dx\right)^{\frac{1}{2}}\,, 
		\end{split}
		\end{equation}
		\begin{equation}\label{e:IV-100}
		\begin{split}
		(\mathrm{VI})\le& \ \frac{2q}{p(p-1)}\left(\sum_{i=1}^n\int a_{x_j}^2|u_{x_i}|^{2q-p}\Phi^2(u_{x_j})\left(\Psi'(u_{x_k}^2)\right)^\theta\eta^2\,dx\right)^{\frac{1}{2}}\\
		& \times\left(\sum_{i=1}^n\int \cM_i(\nabla u)\left(\Psi'(u_{x_k}^2)\right)^{2-\theta}u_{x_k}^2u_{x_ix_k}^2\eta^2\,dx\right)^{\frac{1}{2}}. 
	\end{split}
	\end{equation}
	\UUU We set $\Gamma(t):=\int_0^{t^2}\left(\Psi'(\tau)\right)^{1-\frac{\theta}{2}}\,d\tau$ and $\Lambda(t):=\Gamma(t)\Gamma'(t)$. We notice that 
    \begin{align}
            & \label{e:Gamma''}\Gamma''(t) = 2 ( \Psi'(t^{2}))^{1 - \frac{\theta}{2}} + 2t^{2} ( 2 - \theta ) (\Psi'(t^{2}))^{-\frac{\theta}{2}} \Psi''(t^{2}) \geq 0\,,\\
		&(\Gamma(t)\Gamma'(t))'=4t^2(\Psi'(t^2))^{2-\theta}+\Gamma(t)\Gamma''(t)\ge 4t^{2} (\Psi'(t^2))^{2-\theta}\,,\label{e:equation60}\\
            & \frac{\left(\Gamma(t)\Gamma'(t)\right)^2}{(\Gamma(t)\Gamma'(t))'}=\frac{\left(\Gamma(t)\Gamma'(t)\right)^2}{\Gamma(t)\Gamma''(t)+(\Gamma'(t))^2}\le\frac{\left(\Gamma(t)\Gamma'(t)\right)^2}{(\Gamma'(t))^2}=\Gamma^2(t)\,. \label{e:ineq-gamma}
	\end{align}
    In particular, $\Lambda \in C^{1} (\R)$ is non-decreasing. Applying Proposition \ref{prop:cac} and~\eqref{e:equation60}--\eqref{e:ineq-gamma} we obtain \EEE
	\begin{equation}\label{e:...} 
	\begin{split}
	\sum_{i=1}^n& \int \cM_i(\nabla u)\left(\Psi'(u_{x_k}^2)\right)^{2-\theta}u_{x_k}^2u_{x_ix_k}^2\eta^2\,dx \le C\sum_{i=1}^n\int \cM_i(\nabla u)\Gamma^2(u_{x_k})|\nabla \eta|^2\,dx\\
		&  +C\sum_{i=1}^n\int a_{x_k}^2|u_{x_i}|^{2q-p} \left( 4 \left(\Psi'(u_{x_k}^2)\right)^{2-\theta}u_{x_k}^2+\Gamma(u_{x_k})\Gamma''(u_{x_k})\right)\eta^2\,dx\,, 
	\end{split}
	\end{equation}
    where $C = C(p, q, n)>0$. Using Jensen inequality, \UUU we further estimate \EEE
	\begin{equation}\label{e:Gamma-100}
		\Gamma(t)=\int_0^{t^2}\left(\Psi'(\tau)\right)^{1-\frac{\theta}{2}}\,d\tau\le t^2\left(\frac{1}{t^2}\int_0^{t^2}\Psi'(\tau)\,d\tau\right)^{1-\frac{\theta}{2}}= |t|^\theta\Psi^{1-\frac{\theta}{2}}(t^2).
	\end{equation}
	Using the \UUU triangle and the weighted Young inequality we infer that for every $x_{1}, x_{2}, x_{3}, x_{4} \geq0$ it holds
	\begin{equation*}
		(x_{1}^{1/2} + x_{2}^{1/2})(x_{3}+ x_{4})^{1/2}\le (c_1 + c_2)x_{1} + (c_3 + c_4)x_{2} + \frac{(c_1+c_3)x_{3}}{4 c_1 c_3} + \frac{ ( c_2 + c_4)x_{4}}{4 c_2 c_4}
	\end{equation*}
	for arbitrary positive constants $c_1,c_2,c_3,c_4$. Combining~\eqref{e:I-100}--\eqref{e:Gamma-100} and properly choosing the constants $\{c_{i}\}_{i=1}^{4}$ above, we have, \GGG for some $C=C(n,p,q,\delta)\ge 1$,\EEE 
	\begin{equation}\label{e:I+IV}
	\begin{split}
		(\mathrm{I})+\GGG (\mathrm{III})+(\mathrm{VI})\le & \  (1-2\delta) \EEE C_0 \sum_{i=1}^n\int \cM_i(\nabla u)\Phi^2(u_{x_j})\left(\Psi'(u_{x_k}^2)\right)^\theta u_{x_ix_j}^2\eta^2\,dx\\
		& +C\sum_{i=1}^n\int |\nabla a|^2|u_{x_i}|^{2q-p}\Phi^2(u_{x_j})\left(\Psi'(u_{x_k}^2)\right)^\theta\eta^2\,dx\\
		& +C(1+C_0^{-1}) \sum_{i=1}^n\int \cM_i(\nabla u)\left(\Psi(u_{x_k}^2)\right)^{2-\theta}|u_{x_k}|^{2\theta}|\nabla \eta|^2 dx\\
		& +C(1+C_0^{-1}) \sum_{i=1}^n\int |\nabla a|^2|u_{x_i}|^{2q-p}\Upsilon(u_{x_k}^2)u_{x_k}^2\eta^2\,dx 
	\end{split}
	\end{equation}
	 where $\Upsilon$ is the function defined in~\eqref{e:Upsilon}.
	Combining~\eqref{e:I+IV} with~\eqref{e:1200}--\eqref{e:1203} and observing that the second term in the left hand-side of \eqref{e:1200} is nonnegative, we conclude for~\eqref{e:thesis-4.2}.
\end{proof}

{\GGG As in \cite[Proposition 4.1]{bblv}, applying Proposition~\ref{p:2.2} to suitable powers, we obtain the following result.}

\UUU
\begin{cor}
    \label{c:powers}
     For $\varepsilon \in (0, \varepsilon_0]$, let $u^\varepsilon$  be the minimizer of \eqref{eq:minepsilon} and assume that $a(\cdot)$ satisfies~\eqref{hpsobolev}. Then, there exists a positive constant~$C=C(n,p,q)$  such that for every integer $m \geq 1$, every $s \in [1, m]$, and every Lipschitz function $\eta$ compactly supported in $B$ it holds
     \begin{equation}\label{eq:cacbig}
     \begin{split}
		\sum_{i=1}^n\int & \cM^{\varepsilon}_i(\nabla u^{\varepsilon})|u^{\varepsilon}_{x_j}|^{2s-2}|u^{\varepsilon}_{x_k}|^{2m} (u^{\varepsilon}_{x_ix_j})^2 \eta^2 dx \\
		&\le \sum_{i=1}^n\int \cM^{\varepsilon}_i(\nabla u^{\varepsilon})|u^{\varepsilon}_{x_j}|^{4s-2}|u^{\varepsilon}_{x_k}|^{2m-2s} ( u^{\varepsilon}_{x_ix_j})^2 \eta^2 dx  \\
		& \quad +C\sum_{i=1}^n\int \cM_{i}^{\varepsilon} (\nabla u^{\varepsilon}) 
        |u^{\varepsilon}_{x_j}|^{2s+2m}|\nabla\eta|^2 dx  \\
		& \quad +C m \UUU \sum_{i=1}^n\int |\nabla a |^2|u^{\varepsilon}_{x_i}|^{2q-p}|u^{\varepsilon}_{x_j}|^{2m+2s-2}\eta^2 dx  \\
		& \quad +C(m+1)\sum_{i=1}^n\int \cM^{\varepsilon}_i(\nabla u^{\varepsilon})|u^{\varepsilon}_{x_k}|^{2m+2s}|\nabla\eta|^2 dx  \\
		& \quad +C(m^3+1)\sum_{i=1}^n\int |\nabla a |^2|u^{\varepsilon}_{x_i}|^{2q-p}|u^{\varepsilon}_{x_k}|^{2m+2s-2}\eta^2 dx. 
\end{split}
\end{equation}
\end{cor}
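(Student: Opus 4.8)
The plan is to obtain \eqref{eq:cacbig} from a single application of Proposition~\ref{p:2.2} to the explicit pair
\[
\Phi(t):=\frac{t\,|t|^{2s-2}}{2s-1},\qquad \Psi(t):=t^{m},\qquad \theta:=\frac{m-s}{m-1}\in[0,1],
\]
followed by a reorganisation of the six terms on the right-hand side of \eqref{e:thesis-4.2} into the five terms of \eqref{eq:cacbig} by means of Young's inequality, while carefully tracking the dependence on $m$. With these choices $\Phi\in C^{1}(\R)$ is monotone non-decreasing with $\Phi'(t)=|t|^{2s-2}$, and $\Psi\in C^{2}(\R^{+};\R^{+})$ is monotone non-decreasing, convex and $\Psi(0)=0$, so that Proposition~\ref{p:2.2} applies to any Lipschitz $\eta$ compactly supported in $B$ and any $C_{0}>0$ (the test function $\Phi(u^{\varepsilon}_{x_j})\Psi((u^{\varepsilon}_{x_k})^{2})\eta^{2}$ being admissible since $u^{\varepsilon}\in W^{1,\infty}_{loc}(B)$). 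Two borderline cases must be handled separately: when $s=m$ one has $\theta=0$, which is still admissible — one simply re-runs the proof of Proposition~\ref{p:2.2} with $\theta=0$, or, since $u^{\varepsilon}$ is locally Lipschitz, passes to the limit $\theta\to 0^{+}$ in the inequality obtained for $\theta>0$ — and when $m=1$, where necessarily $s=1$, one takes $\Phi(t)=\Psi(t)=t$ and $\theta=1$.

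The needed computations are elementary: $\Phi'(u^{\varepsilon}_{x_j})=|u^{\varepsilon}_{x_j}|^{2s-2}$, $\Phi^{2}(u^{\varepsilon}_{x_j})=(2s-1)^{-2}|u^{\varepsilon}_{x_j}|^{4s-2}$, $\Phi^{2}(u^{\varepsilon}_{x_j})/\Phi'(u^{\varepsilon}_{x_j})=(2s-1)^{-2}|u^{\varepsilon}_{x_j}|^{2s}$, $\Psi((u^{\varepsilon}_{x_k})^{2})=|u^{\varepsilon}_{x_k}|^{2m}$ and $\Psi'((u^{\varepsilon}_{x_k})^{2})=m\,|u^{\varepsilon}_{x_k}|^{2m-2}$; the reason for the choice of $\theta$ is the identity $(2m-2)\theta=2m-2s$, which turns the first term on the right-hand side of \eqref{e:thesis-4.2} into $C_{0}\,m^{\theta}(2s-1)^{-2}\sum_{i}\int\cM^{\varepsilon}_i(\nabla u^{\varepsilon})|u^{\varepsilon}_{x_j}|^{4s-2}|u^{\varepsilon}_{x_k}|^{2m-2s}(u^{\varepsilon}_{x_ix_j})^{2}\eta^{2}\,dx$. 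Taking $C_{0}:=(2s-1)^{2}m^{-\theta}$ makes this coincide with the first term of \eqref{eq:cacbig} and, simultaneously, yields $1+C_{0}^{-1}\le m+1$. Finally, substituting $\Psi(t)=t^{m}$ into \eqref{e:Upsilon} and using $(2m-2)\theta=2m-2s$ once more one finds $\Upsilon((u^{\varepsilon}_{x_k})^{2})(u^{\varepsilon}_{x_k})^{2}\le c\,m^{2-\theta/2}|u^{\varepsilon}_{x_k}|^{2m+2s-2}$, so that the sixth term of \eqref{e:thesis-4.2} is bounded, using $0\le\theta\le 2$, by $C(1+C_{0}^{-1})\,m^{2-\theta/2}\le C(m^{3}+1)$ times $\sum_{i}\int|\nabla a|^{2}|u^{\varepsilon}_{x_i}|^{2q-p}|u^{\varepsilon}_{x_k}|^{2m+2s-2}\eta^{2}\,dx$.

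After these substitutions every remaining term of the right-hand side of \eqref{e:thesis-4.2} contains a monomial $|u^{\varepsilon}_{x_j}|^{a}|u^{\varepsilon}_{x_k}|^{b}$ with $a+b=2m+2s$ for the $\cM^{\varepsilon}_i$-weighted $|\nabla\eta|^{2}$-terms and $a+b=2m+2s-2$ for the $|\nabla a|^{2}|u^{\varepsilon}_{x_i}|^{2q-p}$-weighted ones. For each such term I would apply Young's inequality in the form $|u^{\varepsilon}_{x_j}|^{a}|u^{\varepsilon}_{x_k}|^{b}\le |u^{\varepsilon}_{x_j}|^{a+b}+|u^{\varepsilon}_{x_k}|^{a+b}$ (whenever both $a$ and $b$ are positive the conjugate exponents exceed $1$ because $1\le s\le m$; if one of them vanishes the monomial is already a pure power), collecting the $|u^{\varepsilon}_{x_j}|$-parts of the $|\nabla\eta|^{2}$-terms into the second term of \eqref{eq:cacbig} and the $|u^{\varepsilon}_{x_k}|$-parts into the fourth, and likewise the $|u^{\varepsilon}_{x_j}|$- and $|u^{\varepsilon}_{x_k}|$-parts of the $|\nabla a|^{2}$-terms into the third and fifth terms of \eqref{eq:cacbig}. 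Using the crude bounds $1+C_{0}^{-1}\le m+1$, $m^{\theta}(2s-1)^{-2}\le m$ and $C(1+C_{0}^{-1})\,m^{2-\theta/2}\le C(m^{3}+1)$, one checks that the constants accumulated in front of the five terms are at most $1$, $C$, $Cm$, $C(m+1)$, $C(m^{3}+1)$ respectively, for a suitable $C=C(n,p,q)$ — which is exactly \eqref{eq:cacbig}.

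The argument is not conceptually hard — it amounts to one application of Proposition~\ref{p:2.2} plus bookkeeping — and for this reason the real point of care, and the main obstacle, is the bookkeeping itself: one must verify that after homogenising the mixed monomials by Young's inequality each of the six terms produced by \eqref{e:thesis-4.2} carries an $m$-power no larger than the one prescribed in \eqref{eq:cacbig}, the tightest constraint being on the last term, which simultaneously absorbs the $1+C_{0}^{-1}\sim m^{\theta}$ factor and the $m^{2-\theta/2}$ coming from $\Upsilon$ and must still fit inside $O(m^{3})$. The degenerate exponent value $\theta=0$ (occurring when $s=m$) and the special case $m=1$ also have to be dealt with consistently, as indicated above.
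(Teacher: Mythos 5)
Your proposal is correct and follows essentially the same route as the paper: the same choices $\Phi(t)=\frac{|t|^{2s-2}t}{2s-1}$, $\Psi(t)=t^{m}$, $\theta=\frac{m-s}{m-1}$ (with $\theta=1$ for $m=1$) in Proposition~\ref{p:2.2}, followed by Young's inequality to homogenize the mixed monomials $|u^{\varepsilon}_{x_j}|^{a}|u^{\varepsilon}_{x_k}|^{b}$; the paper merely picks $C_{0}=m^{-1}$ instead of your $(2s-1)^{2}m^{-\theta}$, with the same effect on the $m$-powers. Your explicit treatment of the borderline case $s=m$, where $\theta=0$ falls outside the range $(0,2)$ stated in Proposition~\ref{p:2.2}, is a detail the paper passes over in silence, and your fix (re-running the proof with $\theta=0$ or letting $\theta\to 0^{+}$) is sound.
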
 \EEE

\begin{proof}
\UUU As usual, we drop the index $\varepsilon$ from $u^\varepsilon$ and $\cM_i^\varepsilon$. We apply Proposition~\ref{p:2.2} with \EEE $\Phi(t)=\frac{|t|^{2s-2}t}{2s-1}$, $\Psi(t)=t^m$ and
\begin{equation*}
	\theta=
	\begin{cases}
		\frac{m-s}{m-1}&\quad\text{if }m>1\\
		1&\quad\text{if }m=1\,.
	\end{cases}
\end{equation*}
Hence,~\eqref{e:thesis-4.2} becomes
\begin{align*}
	\sum_{i=1}^n\int & \cM_i(\nabla u)|u_{x_j}|^{2s-2}|u_{x_k}|^{2m}u_{x_ix_j}^2\eta^2 dx\\
	&\le \frac{C_0m^\theta}{(2s-1)^2} \sum_{i=1}^n\int \cM_i(\nabla u)|u_{x_j}|^{4s-2}|u_{x_k}|^{2m-2s} u_{x_ix_j}^2\eta^2 dx\\
	& \quad +C(1+C_0^{-1}) \sum_{i=1}^n\int \cM_i(\nabla u)|u_{x_k}|^{2m+2s}|\nabla\eta|^2dx\\
	& \quad +\frac{C}{(2s-1)^2} \sum_{i=1}^n\int \cM_i(\nabla u)|u_{x_j}|^{2s}|u_{x_k}|^{2m}|\nabla \eta|^2 dx\\
	& \quad +C \sum_{i=1}^n\int |\nabla a|^2|u_{x_i}|^{2q-p}|u_{x_j}|^{2s-2}|u_{x_k}|^{2m}\eta^2 dx\\
	& \quad +\frac{C \UUU m^{\theta} \EEE }{(2s-1)^2} \sum_{i=1}^n\int |\nabla a|^2|u_{x_i}|^{2q-p}|u_{x_j}|^{4s-2}|u_{x_k}|^{2m-2s}\eta^2 dx\\
	& \quad +7C(1+C_0^{-1})m^2 \sum_{i=1}^n\int |\nabla a|^2|u_{x_i}|^{2q-p}|u_{x_k}|^{2m+2s-2}\eta^2 dx.
\end{align*}
Choosing $C_0=m^{-1}$ gives, after simplifying some inequalities
\begin{align*}
	\sum_{i=1}^n\int & \cM_i(\nabla u)|u_{x_j}|^{2s-2}|u_{x_k}|^{2m}u_{x_ix_j}^2\eta^2 dx\\
	& \le \sum_{i=1}^n \int\cM_i(\nabla u)|u_{x_j}|^{4s-2}|u_{x_k}|^{2m-2s} u_{x_ix_j}^2\eta^2 dx\\
	& \quad +C(m+1)\sum_{i=1}^n \int \cM_i(\nabla u)|u_{x_k}|^{2m+2s}|\nabla \eta|^2 dx\\
	& \quad +C\sum_{i=1}^n \int \cM_i(\nabla u)|u_{x_j}|^{2s}|u_{x_k}|^{2m}|\nabla\eta|^2 dx\\
	& \quad +C\sum_{i=1}^n \int |\nabla a|^2|u_{x_i}|^{2q-p}|u_{x_j}|^{2s-2}|u_{x_k}|^{2m}\eta^2 dx\\
	& \quad +C\UUU m^{\theta} \EEE \sum_{i=1}^n\int |\nabla a|^2|u_{x_i}|^{2q-p}|u_{x_j}|^{4s-2}|u_{x_k}|^{2m-2s}\eta^2 dx\\
	& \quad +C(m^3+1) \sum_{i=1}^n \int |\nabla a|^2|u_{x_i}|^{2q-p}|u_{x_k}|^{2m+2s-2}\eta^2 dx\,.
\end{align*}
Using Young inequalities with the right weights gives~\eqref{eq:cacbig}.
\end{proof}

\begin{prop}
\label{p:iteration}
	 For $\varepsilon \in (0, \varepsilon_0]$, let $u^\varepsilon$  be the minimizer of \eqref{eq:minepsilon} and assume that $a(\cdot)$ satisfies~\eqref{hpsobolev}. Let~$\ell_0$ be a positive integer and set $\sigma: = 2^{\ell_0}-1$. \EEE Then there exists a positive constant $C=C(n,p,q)$ such that \UUU for every Lipschitz function~$\eta$ compactly supported in~$B$ \EEE
	\begin{equation}\label{eq:estsigma}
	\begin{split}
			\int & \left(\left|\nabla\left(|u^\varepsilon_{x_k}|^{\sigma+\frac{p}{2}}\right)\right|^2 + a \left|\nabla\left(|u^\varepsilon_{x_k}|^{\UUU \sigma+\frac{q}{2} \EEE}\right)\right|^2\right)\eta^2 dx\\
			& \le C\sigma^3 \sum_{i,j=1}^n\int \cM^\varepsilon_{i} (\nabla u^{\varepsilon}) |u^\varepsilon_{x_j}|^{2\sigma+2}|\nabla \eta|^2 dx  + \UUU C \sigma^4 \EEE \sum_{i,j=1}^n\int |\nabla a|^2|u^\varepsilon_{x_i}|^{2q-p}|u^\varepsilon_{x_j}|^{2\sigma}\eta^2 dx  \\
			& \quad +C\sigma^4 \sum_{i=1}^n\int \cM^\varepsilon_{i} (\nabla u^{\varepsilon}) |u^\varepsilon_{x_k}|^{2\sigma+2}|\nabla \eta|^2 dx  +C\sigma^6 \sum_{i=1}^n\int |\nabla a |^2|u^\varepsilon_{x_i}|^{2q-p}|u^\varepsilon_{x_k}|^{2\sigma}\eta^2 dx. 
	\end{split}
	\end{equation}
\end{prop}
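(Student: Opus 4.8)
The plan is to first reduce the left-hand side of~\eqref{eq:estsigma} to a sum of $\cM^{\varepsilon}$-weighted Hessian integrals, and then to dispose of those integrals by a finite, dyadically organised iteration of Corollary~\ref{c:powers}, closing off each branch with a single application of the basic Caccioppoli inequality~\eqref{eq:caca}. As usual we suppress the index $\varepsilon$ and write $\cM_i=\cM_i^{\varepsilon}(\nabla u^{\varepsilon})$, $u=u^{\varepsilon}$.

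First I would carry out the \emph{reduction}. Since $u\in W^{2,2}_{loc}\cap W^{1,\infty}_{loc}(B)$ and $\sigma+\tfrac p2\ge 2$, the chain rule gives, a.e. in $B$,
\[
\bigl|\nabla\bigl(|u_{x_k}|^{\sigma+\frac p2}\bigr)\bigr|^2=\Bigl(\sigma+\tfrac p2\Bigr)^2|u_{x_k}|^{2\sigma+p-2}\,|\nabla u_{x_k}|^2,
\]
and similarly with $q$ in place of $p$; moreover, by~\eqref{e:Miepsilon} and $2\le p\le q$ one has $|u_{x_k}|^{p-2}+a\,|u_{x_k}|^{q-2}\le\tfrac12\cM_k$. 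Bounding $(\sigma+\tfrac q2)^2\le C(q)\sigma^2$, using $|\nabla u_{x_k}|^2=\sum_i(u_{x_ix_k})^2$ and then replacing the single summand $\cM_k(u_{x_ix_k})^2$ by $\sum_l\cM_l(u_{x_lx_i})^2$ (all summands nonnegative, with $u_{x_ix_k}=u_{x_kx_i}$), I obtain
\[
\text{LHS of~\eqref{eq:estsigma}}\ \le\ C\sigma^2\sum_{i,l=1}^n\int\cM_l\,|u_{x_k}|^{2\sigma}\,(u_{x_lx_i})^2\,\eta^2\,dx.
\]
It therefore suffices to estimate $S_i:=\sum_l\int\cM_l|u_{x_k}|^{2\sigma}(u_{x_lx_i})^2\eta^2\,dx$ for each fixed $i$.

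Next comes the \emph{iteration}. If $i=k$, then $S_k$ is exactly the left-hand side of~\eqref{eq:caca} for the test direction $j=k$ and $\Lambda(t)=\tfrac{|t|^{2\sigma}t}{2\sigma+1}$, so Proposition~\ref{prop:cac} together with $a_{x_k}^2\le|\nabla a|^2$ bounds $S_k$ by $\tfrac{C}{\sigma^2}\sum_l\int\cM_l|u_{x_k}|^{2\sigma+2}|\nabla\eta|^2\,dx+C\sum_l\int|\nabla a|^2|u_{x_l}|^{2q-p}|u_{x_k}|^{2\sigma}\eta^2\,dx$. If $i\neq k$, then $S_i$ is the left-hand side of~\eqref{eq:cacbig} for the choice $j=i$, $s=1$, $m=\sigma$ (admissible since $\sigma\ge1$). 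The key structural fact is that the first term on the right-hand side of~\eqref{eq:cacbig} reproduces the very same quantity with $(s,m)$ replaced by $(2s,m-s)$, \emph{with coefficient exactly $1$}. Iterating Corollary~\ref{c:powers} one passes through the admissible configurations $(s,m)=(2^{\ell},\sigma-2^{\ell}+1)$, $\ell=0,1,\dots,\ell_0-1$ --- and this is precisely where the hypothesis $\sigma=2^{\ell_0}-1$ is used, since it guarantees $s\le m$ at every step and $s=m=2^{\ell_0-1}$ at the last one --- so that after the step $\ell=\ell_0-1$ the surviving ``bad'' term equals $\sum_l\int\cM_l|u_{x_i}|^{2\sigma}(u_{x_lx_i})^2\eta^2\,dx$ (using $4\cdot2^{\ell_0-1}-2=2\sigma$), which no longer involves $u_{x_k}$. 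A final application of Proposition~\ref{prop:cac} (test direction $i$, same $\Lambda$) absorbs this residual term into integrals of the first two types occurring in~\eqref{eq:estsigma}.

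Finally, the \emph{bookkeeping}: every ``good'' term generated by~\eqref{eq:cacbig} along the iteration is of one of the four forms on the right-hand side of~\eqref{eq:estsigma} (observe that the exponents $2s+2m$ and $2m+2s-2$ are \emph{identically} $2\sigma+2$ and $2\sigma$ throughout the iteration), with coefficient at most $C(m^3+1)\le C\sigma^3$; since there are $\ell_0\le\log_2(\sigma+1)\le\sigma$ steps and each $S_i$ is further multiplied by the factor $C\sigma^2$ from the reduction, a direct count reproduces exactly the powers $\sigma^3,\sigma^4,\sigma^4,\sigma^6$ claimed in~\eqref{eq:estsigma}. I expect the main obstacle to be precisely this second stage: one has to track the exponents and the admissibility constraint $s\le m$ through the dyadic iteration and recognise that $\sigma=2^{\ell_0}-1$ is tuned so that the iteration started at $(1,\sigma)$ terminates, after $\ell_0$ steps, at a pure power $|u_{x_i}|^{2\sigma}$ that can be closed off by the elementary Caccioppoli inequality; the remaining manipulations (weighted Young inequalities, relabelling of indices) are routine. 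Note also the minor but necessary point that the instance $j=k$ of Corollary~\ref{c:powers} is vacuous, which is why the case $i=k$ is handled directly via Proposition~\ref{prop:cac}.
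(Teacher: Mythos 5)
Your proposal is correct and follows essentially the same route as the paper: the dyadic iteration of Corollary~\ref{c:powers} through $(s,m)=(2^{\ell},\sigma+1-2^{\ell})$, closed by Proposition~\ref{prop:cac} with $\Lambda(t)=\frac{|t|^{2\sigma}t}{2\sigma+1}$, with the same bookkeeping yielding $\sigma^3,\sigma^4,\sigma^4,\sigma^6$. The only (harmless) difference is that you perform the chain-rule reduction of the left-hand side at the start, whereas the paper extracts the $i=k$ component of $\cM_i$ at the end; the substance is identical.
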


\begin{proof}
	\UUU We drop the index $\varepsilon$. \EEE For $\ell=0,1,\dots,\ell_0$ we take
	\begin{equation*}
		s_\ell=2^\ell,\quad m_\ell=2^{\ell_0}-2^\ell=\sigma+1-2^\ell.
	\end{equation*}
	These families satisfy
	\begin{align*}
		& s_0=1, \ m_0=\sigma, \ s_{\ell_0}=\sigma+1, \ m_{\ell_0}=0\,,\\
		& 1\le s_\ell\le m_\ell\quad\forall\ell=0,\dots,\ell_0-1\,,\\
		& s_\ell+m_\ell=\sigma+1\quad\forall\ell=0,\dots,\ell_0\,,\\
		& 4s_\ell-2=2s_{\ell+1}-2,\quad 2m_\ell-2s_\ell=2m_{\ell+1}\quad\forall\ell=0,\dots,\ell_0-1.
	\end{align*}
	Substituting in \eqref{eq:cacbig} gives
	\begin{align*}
		\sum_{i=1}^n\int & \cM_i(\nabla u)|u_{x_j}|^{2s_\ell-2}|u_{x_k}|^{2m_\ell}u_{x_ix_j}^2\eta^2 dx\\
		& \le \sum_{i=1}^n\int \cM_i(\nabla u)|u_{x_j}|^{2s_{\ell+1}-2}|u_{x_k}|^{2m_{\ell+1}}u_{x_ix_j}^2\eta^2 dx\\
		& \quad +C\sum_{i=1}^n\int \cM_i(\nabla u)|u_{x_j}|^{2\sigma+2}|\nabla\eta|^2 dx +C \UUU m_{\ell} \EEE \sum_{i=1}^n\int |\nabla a|^2|u_{x_i}|^{2q-p}|u_{x_j}|^{2\sigma}\eta^2 dx\\
		& \quad +C(m_\ell+1) \sum_{i=1}^n\int \cM_i(\nabla u)|u_{x_k}|^{2\sigma+2}|\nabla\eta|^2 dx\\
		& \quad +C(m_\ell^3+1) \sum_{i=1}^n\int |\nabla a|^2|u_{x_i}|^{2q-p}|u_{x_k}|^{2\sigma}\eta^2 dx\,.
	\end{align*}
	Iterating from $\ell=0$ to $\ell=\ell_0-1$ gives
	\begin{equation}\label{e:iterate}
	\begin{split}
		\sum_{i=1}^n\int & \cM_i(\nabla u)|u_{x_k}|^{2\sigma}u_{x_ix_j}^2\eta^2 dx\\
		& \le \sum_{i=1}^n\int \cM_i(\nabla u)|u_{x_j}|^{2\sigma}u_{x_ix_j}^2\eta^2 dx+C\ell_0 \sum_{i=1}^n\int \cM_i(\nabla u)|u_{x_j}|^{2\sigma+2}|\nabla \eta|^2 dx r\\
		& \quad  +C\ell_0 \UUU \sigma \EEE  \sum_{i=1}^n\int |\nabla a|^2|u_{x_i}|^{2q-p}|u_{x_j}|^{2\sigma}\eta^2 dx +C\ell_0\sigma \sum_{i=1}^n\int \cM_i(\nabla u)|u_{x_k}|^{2\sigma+2}|\nabla \eta|^2 dx\\
		& \quad  +C\ell_0\sigma^3 \sum_{i=1}^n\int |\nabla a|^2|u_{x_i}|^{2q-p}|u_{x_k}|^{2\sigma}\eta^2 dx. 
	\end{split}
	\end{equation}
	We now use \eqref{eq:caca} with $\UUU \Lambda (t) \EEE =\frac{|t|^{2\sigma}t}{2\sigma+1}$ to estimate
	\begin{align*}
		\sum_{i=1}^n\int & \cM_i(\nabla u)|u_{x_j}|^{2\sigma}u_{x_ix_j}^2\eta^2 dx\\
		& \le\frac{C}{(2\sigma+1)^2}\sum_{i=1}^n\int \cM_i(\nabla u)|u_{x_j}|^{2\sigma+2}|\nabla \eta|^2 dx
		+\UUU C \EEE \sum_{i=1}^n\int |\nabla a|^2|u_{x_i}|^{2q-p}|u_{x_j}|^{2\sigma}\eta^2 dx\\
		& \le C\sum_{i=1}^n\int \cM_i(\nabla u)|u_{x_j}|^{2\sigma+2}|\nabla \eta|^2 dx
		+ \UUU C \EEE  \sum_{i=1}^n\int |\nabla a|^2|u_{x_i}|^{2q-p}|u_{x_j}|^{2\sigma}\eta^2 dx\,.
	\end{align*}
	Combining the above inequality with~\eqref{e:iterate} we get
	\begin{equation}\label{e:iterate-2}
	\begin{split}
		\sum_{i=1}^n&\int  \cM_i(\nabla u)|u_{x_k}|^{2\sigma}u_{x_ix_k}^2\eta^2 dx\\
		& \le C\ell_0 \sum_{i=1}^n\int \cM_i(\nabla u)|u_{x_j}|^{2\sigma+2}|\nabla \eta|^2\,dx +C\ell_0 \UUU \sigma \EEE \sum_{i=1}^n\int |\nabla a|^2|u_{x_i}|^{2q-p}|u_{x_j}|^{2\sigma}\eta^2 dx \\
		& +C\ell_0\sigma\sum_{i=1}^n\int \cM_i(\nabla u)|u_{x_k}|^{2\sigma+2}|\nabla \eta|^2 dx + C\ell_0\sigma^3 \sum_{i=1}^n\int |\nabla a|^2|u_{x_i}|^{2q-p}|u_{x_k}|^{2\sigma}\eta^2 dx.
		\end{split}
	\end{equation}
	Notice that for $\gamma = p, q$ we have \EEE
	\begin{equation*}
		\sum_{i=1}^n|u_{x_i}|^{\UUU \gamma \EEE -2}|u_{x_k}|^{2\sigma}u_{x_ix_j}^2 \UUU \geq \EEE |u_{x_k}|^{\UUU \gamma \EEE -2}|u_{x_k}|^{2\sigma}u_{x_kx_j}^2 \UUU = \EEE \left(\sigma+\frac{\UUU \gamma \EEE }{2}\right)^{-2}\left|\left(|u_{x_k}|^{\sigma+\frac{\UUU \gamma \EEE }{2}}\right)_{x_j}\right|^2.
	\end{equation*}
	 By excluding the nonnegative term with $|\nabla u|^{q-2}$ from $\cM_i$ in the left hand side of~\eqref{e:iterate-2}, since $\ell_0\le\sigma$ we obtain \EEE
	\begin{align*}
		\int & \left(\left|\left(|u_{x_k}|^{\sigma+\frac{p}{2}}\right)_{x_j}\right|^2+a\left|\left(|u_{x_k}|^{\sigma+ \UUU \frac{q}{2} \EEE}\right)_{x_j}\right|^2\right)\eta^2 dx\\
		& \le C\sigma^3\sum_{i=1}^n \int \cM_i(\nabla u)|u_{x_j}|^{2\sigma+2}|\nabla \eta|^2 dx +C \UUU \sigma^4 \EEE \sum_{i=1}^n \int |\nabla a|^2|u_{x_i}|^{2q-p}|u_{x_j}|^{2\sigma}\eta^2 dx 
        \\
        &\quad +C\sigma^4\sum_{i=1}^n \int \cM_i(\nabla u)|u_{x_k}|^{2\sigma+2}|\nabla \eta|^2 dx +C\sigma^6\sum_{i=1}^n \int |\nabla a|^2|u_{x_i}|^{2q-p}|u_{x_k}|^{2\sigma}\eta^2 dx.
	\end{align*}
	 \UUU Summing up over $j$ gives \eqref{eq:estsigma}, and the proof is thus concluded. \EEE
\end{proof}
\section{Proof of  Theorem~\ref{thm:main}}
\subsection{The key estimate}

The core of the proof is the following Lipschitz estimate. The proof is very similar to that of \cite[Theorem 5.1]{bblv}, though some important technical modifications have to be taken into account.
\begin{prop}\label{prop:lip}
	\UUU For $\varepsilon \in (0, \varepsilon_0]$, let $u^\varepsilon$  be the minimizer of \eqref{eq:minepsilon} and assume that $a( \cdot)$ satisfies~\eqref{hpsobolev}. Then, there exist positive constants $C,\mu>0$ and $\Theta\geq 1$ (depending  on~$n,p,q,a$) \EEE such that for every $0<r<R<1$ and every concentric balls $B_r\subset B_R$  \UUU it holds \EEE
	\begin{equation}\label{eq:lip}
		\|\nabla u^\varepsilon\|_{L^\infty(B_r)}\le \frac{C}{(R-r)^{\mu}}\left(\int_{B_R}|\nabla u^\varepsilon|^{p}+1\right)^{ \frac{\Theta}{p} }\,.
	\end{equation}
\end{prop}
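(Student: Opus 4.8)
The plan is to establish the $\varepsilon$-uniform estimate~\eqref{eq:lip} by feeding the Caccioppoli inequality of Proposition~\ref{p:iteration} into a Moser-type iteration, in the spirit of \cite[Theorem~5.1]{bblv}; the two genuinely new points, forced by the double-phase structure, are a uniform-in-$\varepsilon$ higher integrability bound for $\nabla u^\varepsilon$ (needed to absorb the terms carrying $a$ and $\nabla a$) and a final interpolation/reverse-H\"older correction. Throughout I drop the index $\varepsilon$ and only keep track of the $\varepsilon$-independence of the constants. I would first observe that $u^\varepsilon$ minimizes $\cF_\varepsilon$, which has the same structure exploited in the proof of Theorem~\ref{revhol} (the term $\sigma_\varepsilon\int|\nabla v|^q\dx$ playing the role of the regularization $\sigma_m\int|z|^{2q-p}$ there); running that argument for $u^\varepsilon$ and using~\eqref{eq:boundp} then gives, for every $\tilde q<np/(n-2\alpha)$,
\[
\Big(\fint_{B_{\rho/2}}|\nabla u^\varepsilon|^{\tilde q}\dx\Big)^{1/\tilde q}\le C\Big(\fint_{B_\rho}(|\nabla u^\varepsilon|^p+1)\dx\Big)^{\tilde b/p}\qquad\text{for every }B_\rho\subseteq B_R,
\]
with $C,\tilde b$ independent of $\varepsilon$. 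Since $\tfrac qp<1+\tfrac\alpha n$ one checks $\tfrac{n(q-p)}{\alpha}<\tfrac{np}{n-2\alpha}$, so I fix once and for all $\tilde q$ with $\tfrac{n(q-p)}{\alpha}<\tilde q<\tfrac{np}{n-2\alpha}$; in particular $\nabla u^\varepsilon$ is bounded in $L^{\tilde q}(B_{R/2})$ uniformly in $\varepsilon$, with bound controlled by $\int_{B_R}(|\nabla u^\varepsilon|^p+1)\dx$.

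\textbf{Step 1 (a self-improving inequality).} Next I would fix an integer $\ell_0\ge1$, put $\sigma=2^{\ell_0}-1$, $\gamma=2\sigma+p$, and take $\eta$ with $\eta\equiv1$ on $B_t$, $\operatorname{supp}\eta\subset B_s$, $|\nabla\eta|\le2/(s-t)$, for concentric $B_t\subset B_s\Subset B_{R/2}$. In~\eqref{eq:estsigma} I keep on the left only $\int|\nabla(|u_{x_k}|^{\sigma+p/2})|^2\eta^2\dx$, apply the Sobolev inequality to $|u_{x_k}|^{\sigma+p/2}\eta$ (with the standard modification when $n=2$), and move the resulting $\int|u_{x_k}|^{\gamma}|\nabla\eta|^2$ to the right. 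Since $2\le p\le q$, $\|a\|_{L^\infty}<\infty$ and $\sigma_\varepsilon\le1$, one has $\cM^\varepsilon_i(\nabla u^\varepsilon)\lesssim(1+|\nabla u^\varepsilon|^{q-p})|\nabla u^\varepsilon|^{p-2}$, so that—using $|u^\varepsilon_{x_j}|,|u^\varepsilon_{x_k}|\le|\nabla u^\varepsilon|$ and $2\sigma+2q-p=\gamma+2(q-p)$—every term on the right of~\eqref{eq:estsigma} is controlled by a multiple of
\[
\sigma^6\int(1+|\nabla u^\varepsilon|^{q-p})|\nabla u^\varepsilon|^{\gamma}|\nabla\eta|^2\dx\;+\;\sigma^6\int|\nabla a|^2|\nabla u^\varepsilon|^{\gamma+2(q-p)}\eta^2\dx.
\]
I would split the first integral by H\"older's inequality and Step~0, $\int|\nabla u^\varepsilon|^{q-p}|\nabla u^\varepsilon|^{\gamma}|\nabla\eta|^2\le\|\nabla u^\varepsilon\|_{L^{\tilde q}(B_s)}^{q-p}\big\||\nabla u^\varepsilon|^{\gamma}|\nabla\eta|^2\big\|_{L^{\lambda_1}}$ with $\lambda_1=\tfrac{\tilde q}{\tilde q-q+p}<\chi:=\tfrac n{n-2}$ (a consequence of $\tilde q>n(q-p)$), and the second by a three-fold H\"older inequality, peeling off $|\nabla a|^2\in L^{n/(2(1-\alpha))}$ (this is~\eqref{hpsobolev}) and $|\nabla u^\varepsilon|^{2(q-p)}\in L^{\tilde q/(2(q-p))}$ (Step~0), keeping $|\nabla u^\varepsilon|^{\gamma}\eta^2$ in $L^{\lambda_2}$ where
\[
\frac1{\lambda_2}=1-\frac{2(1-\alpha)}{n}-\frac{2(q-p)}{\tilde q}>\frac{n-2}{n}=\frac1\chi;
\]
the strict inequality is \emph{precisely} the quantitative form of $\tfrac qp<1+\tfrac\alpha n$ encoded in $\tilde q>\tfrac{n(q-p)}{\alpha}$. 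With $\lambda:=\max\{\lambda_1,\lambda_2\}<\chi$, absorbing volume factors and the higher-integrability bound of Step~0 into a constant $\cH$ that depends only on $n,p,q,a$ and on $\int_{B_R}(|\nabla u^\varepsilon|^p+1)\dx$, I would reach a self-improving inequality of the form
\[
\Big(\int_{B_t}(|\nabla u^\varepsilon|^{\gamma\chi}+1)\dx\Big)^{1/\chi}\le\frac{C\,\sigma^{6}\,\cH}{(s-t)^{2}}\Big(\int_{B_s}(|\nabla u^\varepsilon|^{\gamma\lambda}+1)\dx\Big)^{1/\lambda},
\]
valid for all such $B_t\subset B_s$ and all $\ell_0\ge1$.

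\textbf{Step 2 (iteration and correction).} Starting from the above inequality—and, where intermediate exponents are needed, from Corollary~\ref{c:powers}—I would iterate exactly as in the proof of \cite[Theorem~5.1]{bblv}: one propagates the integrability of $\nabla u^\varepsilon$ along a sequence of shrinking radii $R/2=\rho_0>\rho_1>\dots\downarrow r$ and of growing exponents, the gain $\chi/\lambda>1$ driving the process. Since the constants grow only polynomially in $\sigma$ while the exponents grow geometrically, the resulting infinite product of constants converges, and one obtains a preliminary bound
\[
\|\nabla u^\varepsilon\|_{L^\infty(B_{\rho})}\le\frac{C\,\cH^{\Theta_0}}{(R/2-\rho)^{\mu_0}}\Big(\int_{B_{R/2}}(|\nabla u^\varepsilon|^{m_0}+1)\dx\Big)^{\Theta_0/m_0}\qquad(\rho<R/2),
\]
for some $m_0$ with $p\le m_0\le\tilde q$ and constants $C,\mu_0,\Theta_0$ depending only on $n,p,q,\alpha$. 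Finally, since $m_0\le\tilde q<np/(n-2\alpha)$, I would invoke the reverse-H\"older inequality~\eqref{eq:rh2} of Theorem~\ref{revhol} (applied to $u^\varepsilon$ as in Step~0) to bound $\int_{B_{R/2}}(|\nabla u^\varepsilon|^{m_0}+1)\dx$ by a power of $\int_{B_R}(|\nabla u^\varepsilon|^p+1)\dx$; equivalently one may interpolate $\|\nabla u^\varepsilon\|_{L^{m_0}}$ between $\|\nabla u^\varepsilon\|_{L^p}$ and $\|\nabla u^\varepsilon\|_{L^\infty}$ and absorb the (small-exponent) $L^\infty$ term by Young's inequality together with Lemma~\ref{lem:est}. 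Either way~\eqref{eq:lip} follows, the passage from $B_{R/2}$ to $B_R$ and from $B_\rho$ to a general $B_r\subset B_R$ (by a covering argument) producing the final $\mu,\Theta$ and the dependence of $C$ on~$R$.

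\textbf{Main obstacle.} The delicate point is Step~2: because Proposition~\ref{p:iteration} is available only for the dyadic powers $\sigma=2^{\ell_0}-1$, the iteration cannot freely pick intermediate exponents, and one must carefully balance the choice of radii against the polynomial growth of the constants—this part is genuinely technical and closely follows \cite{bblv}. The double-phase novelty is concentrated in Step~1, where it is exactly the condition $\tfrac qp<1+\tfrac\alpha n$, together with the higher integrability of Step~0 and $a\in W^{1,\frac n{1-\alpha}}$, that keeps the H\"older exponent $\lambda$ strictly below the Sobolev exponent $\chi$, so that the self-improving inequality still produces a gain; without it the iteration would not close.
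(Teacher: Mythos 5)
Your overall architecture (Caccioppoli for dyadic powers from Proposition~\ref{p:iteration}, Sobolev, iteration, final correction) matches the paper's, but two of your three steps contain genuine gaps, and the auxiliary Step~0 is asserted rather than proved. First, the iteration of Step~2 does not close as you describe it. Your self-improving inequality is available only at the dyadic levels $\gamma=p+2\sigma$, $\sigma=2^{\ell_0}-1$, so chaining level $\ell_0$ into level $\ell_0+1$ would require $(p+2\sigma_{\ell_0+1})\lambda\le(p+2\sigma_{\ell_0})\chi$, i.e.\ essentially $2\lambda\le\chi$: since $\lambda\ge 1$ (indeed $\lambda\ge\lambda_2\ge h=\tfrac{n}{n-2+2\alpha}$) while $\chi=\tfrac{n}{n-2}<2$ for $n\ge5$, the per-step Sobolev gain $\chi/\lambda>1$ can never bridge the factor-two jump of the admissible exponents (and the same failure occurs in general for $n=3,4$). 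So ``the gain $\chi/\lambda$ driving the process'' with ``constants growing polynomially while exponents grow geometrically'' is not the mechanism. What is actually needed, and what the paper does following \cite{bblv}, is an interpolation at \emph{every} step of the $L^{\gamma_j}$ norm between the previous level $L^{\gamma_{j-1}}$ and the Sobolev level $L^{\hat\gamma_j}$, followed by absorption via Young's inequality and the hole-filling Lemma~\ref{lem:est}; this works because $\tfrac qp<1+\tfrac\alpha n$ gives $\gamma_{j-1}<\gamma_j<\hat\gamma_j$ (cf.~\eqref{graziehp}) and keeps the weights $M_j$ uniformly in $(0,1)$, and it produces correction exponents $\kappa_j$ whose infinite product $\Theta_\infty$ must be shown to converge. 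None of this is in your sketch, and it is exactly where the double-phase term $|\nabla a|^2\,\cU^{\,2q-p+2\sigma}$ (handled in the paper by H\"older with $h=\tfrac{n}{n-2+2\alpha}$, producing the shifted exponents $\gamma_j=h(2q-p+2\sigma_j)$) interacts with the scheme.

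Second, the exit from the iteration is also gapped. The lowest exponent your iteration can land on is the bottom dyadic level, $m_0=(p+2)\lambda\ge(p+2)h$ (in the paper, $\gamma_0=h(2q-p+2)$), and $(p+2)h>\tfrac{np}{n-2\alpha}$ as soon as $n>p+2$; hence your claim $p\le m_0\le\tilde q<\tfrac{np}{n-2\alpha}$ is false in general, and the reverse H\"older inequality \eqref{eq:rh2} cannot be invoked to pass from $L^{m_0}$ to $L^p$. Your fallback (interpolate $L^{m_0}$ between $L^p$ and $L^\infty$ and absorb by Young plus Lemma~\ref{lem:est}) needs $\Theta_0(1-p/m_0)<1$, a quantitative smallness of the accumulated exponent that you do not establish; this is precisely the ``excessively large exponent'' issue the paper flags, and it is resolved there by a different device: interpolating $L^{h(2q-p+2)}$ between $L^{p}$ and $L^{\frac{2^*}{2}(p+2)}$, re-using \eqref{eq:hadh} with $\sigma=1$, and absorbing thanks to $M<1$, which is equivalent to $\tfrac qp<1+\tfrac\alpha n$. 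Finally, your Step~0 is only asserted: Theorem~\ref{revhol} is stated and proved for local minimizers of $\cF_0$, not for the minimizers $u^\varepsilon$ of $\cF_\varepsilon$ with boundary datum $U^\varepsilon$, so a uniform-in-$\varepsilon$ analogue would have to be re-derived (plausible, since $u^\varepsilon$ is smooth and $\sigma_\varepsilon\int|\nabla u^\varepsilon|^q$ is uniformly bounded, but nontrivial); note that the paper's proof never needs it, since it carries the $L^{h(2q-p+2\sigma)}$ norms through the iteration and only uses $\int_{B_R}|\nabla u^\varepsilon|^p$ at the very end.
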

\begin{proof}
	We prove the proposition in the case $n\ge 3$, so that $2^*$ is finite. An analogous argument works in dimension $n=2$ replacing $2^*$ with a sufficiently large exponent.
    
    As usual, we drop the superscript $\varepsilon$ for simplicity. \UUU For any Lipschitz continuous function $\eta$ compactly supported in $B=B_R$, we add to both sides of \eqref{eq:estsigma} (skipping the nonnegative second term in the left hand side) the term $\int |\nabla \eta|^2|u_{x_k}|^{p+2\sigma}\,dx$, obtaining \EEE
	\begin{align*}
		&\int \left|\nabla\left(|u_{x_k}|^{\sigma+\frac{p-2}{2}}u_{x_k}\eta\right)\right|^2 dx
        \\
        &
	\qquad 	\le  \ C\sigma^3\sum_{i,j=1}^n \int \cM_i(\nabla u)|u_{x_j}|^{2\sigma+2}|\nabla\eta|^2 dx +C \UUU \sigma^4 \EEE \sum_{i,j=1}^n\int |\nabla a|^2|u_{x_i}|^{2q-p}|u_{x_j}|^{2\sigma}\eta^2 dx\\
		& \qquad \quad +C\sigma^4\sum_{i=1}^n \int \cM_i(\nabla u)|u_{x_k}|^{2\sigma+2}|\nabla \eta|^2 dx +C\sigma^6\sum_{i=1}^n \int |\nabla a|^2|u_{x_i}|^{2q-p}|u_{x_k}|^{2\sigma}\eta^2 dx\\
		& \qquad \quad +C \int |\nabla\eta|^2|u_{x_k}|^{p+2\sigma}\,dx.
	\end{align*}
	 \EEE We apply Sobolev inequality on the left hand side:
	\begin{align*}
		& \left(\int |u_{x_k}|^{\frac{2^*}{2}(p+2\sigma)}\eta^{2^*}dx\right)^{2/2^*}
		\\
        & \qquad \le  \ C\sigma^3\sum_{i,j=1}^n \int \cM_i(\nabla u)|u_{x_j}|^{2\sigma+2}|\nabla\eta|^2 dx +C \UUU \sigma^4 \EEE \sum_{i,j=1}^n \int |\nabla a|^2|u_{x_i}|^{2q-p}|u_{x_j}|^{2\sigma}\eta^2 dx\\
		& \qquad \quad +C\sigma^4\sum_{i=1}^n \int \cM_i(\nabla u)|u_{x_k}|^{2\sigma+2}|\nabla\eta|^2 dx+C\sigma^6\sum_{i=1}^n \int |\nabla a|^2|u_{x_i}|^{2q-p}|u_{x_k}|^{2\sigma}\eta^2 dx\\
		& \qquad \quad  +C \int |\nabla\eta|^2|u_{x_k}|^{p+2\sigma}dx.
	\end{align*}
	Let us now sum over $k=1,\dots,n$. Minkowski inequality implies that
	\begin{equation*}
		\sum_{k=1}^n\left(\int |u_{x_k}|^{\frac{2^*}{2}(p+2\sigma)}\eta^{2^*} dx\right)^{2/2^*}=\sum_{k=1}^n\left\||u_{x_k}|^{p+2\sigma}\eta^2\right\|_{L^{2^*/2}}\ge\left\|\sum_{k=1}^n|u_{x_k}|^{p+2\sigma}\eta^2\right\|_{L^{2^*/2}}
	\end{equation*}
	so that (recall that $\sigma>1$):
	\begin{align*}
		& \left(\int \left(\sum_{k=1}^n|u_{x_k}|^{(p+2\sigma)}\right)^{\frac{2^*}{2}}\eta^{2^*}dx\right)^{\frac{2}{2^*}}\\
		& \qquad \le C\sigma^4\sum_{i,k=1}^n \int \cM_i(\nabla u)|u_{x_k}|^{2\sigma+2}|\nabla\eta|^2 dx +C\sigma^6\sum_{i,k=1}^n \int |\nabla a|^2|u_{x_i}|^{2q-p}|u_{x_k}|^{2\sigma}\eta^2 dx\\
		& \qquad \quad +C\sum_{k=1}^n \int |\nabla\eta|^2|u_{x_k}|^{p+2\sigma}dx.
	\end{align*}
	We define
	\begin{equation*}
		\cU(x):=\max_{k=1,\dots,n}|u_{x_k}(x)|
	\end{equation*}
	which satisfies, \UUU for every $\alpha >0,$ \EEE
	\begin{equation*}
    		\cU^\alpha\le\sum_{k=1}^n|u_{x_k}|^\alpha\le n\cU^\alpha \qquad \text{and} \qquad 
		|\nabla u|^\alpha\le n^{\alpha/2}\cU^\alpha.
	\end{equation*}
	Using this function \UUU and recalling the definition of~$\cM_{i} (\nabla u)$ (cf.~\eqref{e:Miepsilon}) \EEE we have
	\begin{equation}\label{e:U1}
	\begin{split}
		\left(\int \cU^{\frac{2^*}{2}(p+2\sigma)}\eta^{2^*}dx\right)^{\frac{2}{2^*}}
		&\le \ C\sigma^4 \int \cU^{p+2\sigma}|\nabla\eta|^2  dx \\
		& +C\sigma^4 \int (a(x)+1)\cU^{q+2\sigma}|\nabla\eta|^2 dx +C\sigma^6 \int |\nabla a|^2\cU^{2q-p+2\sigma}\eta^2 dx. 
	\end{split}
	\end{equation}
	 \UUU Let $0 < r < \rho< R< 1$ and let \UUU $B_s \subset B_t$ \EEE be two concentric balls with $r\le s<t\le \rho$. Let us choose \EEE $\eta=\eta_{s,t}$ such that $\textrm{supp}\UUU (\eta) \EEE =B_t$, \UUU $0 \leq \eta \leq 1$, $\eta =  1$ on $B_{s}$, \EEE and $\|\nabla\eta\|_{L^\infty}\le\frac{C}{t-s}$. \UUU By H\"older inequality, we continue in~\eqref{e:U1} with \EEE
     \begin{align*}
		 \left(\int_{B_s} \cU^{\frac{2^*}{2}(p+2\sigma)}\,dx\right)^{\frac{2}{2^*}}
		\le & \  \frac{C\sigma^4}{(t-s)^2} \int_{B_t} \cU^{p+2\sigma}\,dx+\frac{C\sigma^4(\|a\|_{L^\infty}+1)}{(t-s)^2} \int_{B_t} \cU^{q+2\sigma}\,dx\\
		& +C\sigma^6\|\nabla a\|_{L^{\frac{n}{1-\alpha}}}^2 \left(\int_{B_t} \cU^{(2q-p+2\sigma)h}\eta^2\,dx\right)^{\frac{1}{h}},
	\end{align*}
	where \UUU we have set \EEE $h: =\frac{n}{n-2(1-\alpha)}$. \UUU We further simplify the above expression with \EEE
	\begin{equation}\label{eq:hadh}
		\left(\int_{B_s} \cU^{\frac{2^*}{2}(p+2\sigma)}\,dx\right)^{\frac{2}{2^*}}\le \frac{C\sigma^6}{(t-s)^2}\left(1+\|a\|_{L^\infty}+\|\nabla a\|_{L^{\frac{n}{1-\alpha}}}^2\right)\left(\int_{B_t}\cU^{(2q-p+2\sigma)h}\,dx+1\right)^{\frac{1}{h}}.
	\end{equation}
     
        \EEE For every $j \in \mathbb{N},$ let us consider the following quantities
        \begin{align}
            \label{eq:sig}
		\sigma_j & :=2^{j+1}-1\,,\\
            \label{eq:gam}
		\gamma_j & :=(2q-p+2 \UUU \sigma_{j} \EEE )h=\frac{n(2q-p+2^{j+2}-2)}{n-2+2\alpha}\,,\\
            \hat{\gamma}_j & :=\frac{2^*}{2}(p+2 \UUU \sigma_{j} \EEE )=\frac{n(p+2^{j+2}-2)}{n-2}\,.\label{eq:gam-2}
	\end{align}
	\UUU We thus rewrite~\eqref{eq:hadh}:
	 \EEE
	\begin{equation}\label{eq:hse}
		\left(\int_{B_s} \cU^{\hat{\gamma}_j}\,dx\right)^{\frac{2}{2^*}}\le \frac{C2^{6j}}{(t-s)^2}\left(1+\|a\|_{L^\infty}+\|\nabla a\|_{L^{\frac{n}{1-\alpha}}}^2\right)\left(\int_{B_t}\cU^{\gamma_j}\,dx+1\right)^{\frac{1}{h}}.
	\end{equation}
	
        We now use the interpolation inequality in Lebesgue spaces to deduce an inequality involving $L^{\gamma_j}$ and $L^{\gamma_{j-1}}$ norms. First, we notice that
	\begin{equation*}
		\hat{\gamma}_j>\gamma_j \ \Longleftrightarrow \ h(2q-p-2 \UUU + \EEE 2^{j+2})<\frac{2^*}{2}(p-2+2^{j+2}) \ \Longleftrightarrow \  2^{j+2}>\frac{n-2}{\alpha}(q-p)-(p-2)\,.
	\end{equation*}
	\UUU Since \EEE
	\begin{equation*}
		\frac{q}{p}<1+\frac{\alpha}{n}<1+\frac{\alpha}{n-2},
	\end{equation*}
	we \UUU immediately deduce that \EEE
	\begin{equation}\label{graziehp}
		\frac{n-2}{\alpha}(q-p)-(p-2)<2
	\end{equation}
	and thus $\gamma_{j-1}<\gamma_j<\hat{\gamma}_j$ for all $j\ge 0$.
	We can therefore consider the interpolation inequality
	\begin{equation}\label{e:interpolation-ineq}
		\int_{B_s}\cU^{\gamma_j}\,dx\le\left(\int_{B_s}\cU^{\gamma_{j-1}}\,dx\right)^{\frac{\tau_j\gamma_j}{\gamma_{j-1}}}\left(\int_{B_s}\cU^{\hat{\gamma}_j}\,dx\right)^{\frac{(1-\tau_j)\gamma_j}{\hat{\gamma}_j}}
	\end{equation}
	where the value of  \UUU $\tau_j \in (0, 1)$ \EEE is
	\begin{equation}\label{e:tauj}
		\tau_j : =\frac{\displaystyle\frac{\hat{\gamma}_j}{\gamma_j}-1}{\displaystyle\frac{\hat{\gamma}_j}{\gamma_j}\cdot\frac{\gamma_j}{\gamma_{j-1}}-1}.
	\end{equation}
	\GGG The sequence $\{\tau_j\}_{j\in\mathbb{N}}$ satisfies $\displaystyle \lim_{j\to+\infty}\tau_j=\tau_\infty\in (0,1)$.
	
	\UUU Let us further consider the positive quantity
	\begin{equation*}
		M_j:=(1-\tau_j)\frac{2^*}{2h}\cdot\frac{\gamma_j}{\hat{\gamma}_j}=(1-\tau_j)\frac{2q-p+2\sigma_j}{p+2\sigma_j}.
	\end{equation*}
	We can write
	\begin{equation*}
		M_j=\frac{\frac{\hat{\gamma}_j}{\gamma_{j-1}}-\frac{\hat{\gamma}_j}{\gamma_j}}{\frac{\hat{\gamma}_j}{\gamma_{j-1}}-1}\cdot\frac{\gamma_j}{\hat{\gamma}_j}\cdot\frac{2^*}{2h}=\frac{\gamma_j-\gamma_{j-1}}{\hat{\gamma}_j-\gamma_{j-1}}\cdot\frac{2^*}{2h}=\frac{2^{j+1}}{\frac{2^*}{2h}(p-2+2^{j+2})-(2q-p-2\GGG +\UUU 2^{j+1})}\cdot\frac{2^*}{2h}
	\end{equation*}
	so that $M_j<1$ if
	\begin{equation*}
		\frac{2^*}{2h}(p-2+2^{j+2})-(2q-p-2\GGG +\UUU 2^{j+1})>2^{j+1}\cdot\frac{2^*}{2h}
	\end{equation*}
	and therefore
	\begin{equation*}
		M_j<1 \ \Longleftrightarrow \ 2^{j+1}>\frac{n-2}{\alpha}(q-p)-(p-2).
	\end{equation*}
	Since, for every $j\ge 0$, the right hand side is smaller than $2$, \GGG as we already observed in \eqref{graziehp}, and since $M_j\to 1-\tau_\infty\in(0,1)$, \UUU we have that there exist two constants $C_1,C_2$ only depending on~$p$, $q$, $\alpha$, and $n$, such that
	\begin{equation}\label{e:boundMj}
		0<C_1\le M_j\le C_2<1\quad\forall j\ge 0.
	\end{equation}
    Applying~\eqref{eq:hse} to~\eqref{e:interpolation-ineq}, using Young inequality and~\eqref{e:boundMj} we infer that \EEE
	\begin{equation}\label{e:2000}
	\begin{split}
		\int_{B_s} \cU^{\gamma_j}\,dx&\le \left(\frac{C2^{6j}}{(t-s)^2}\right)^{\frac{2^*(1-\tau_j)\gamma_j}{2\hat{\gamma}_j}}\left(\int_{B_s}\cU^{\gamma_{j-1}}\,dx\right)^{\frac{\tau_j\gamma_j}{\gamma_{j-1}}}\left(\int_{B_t}\cU^{\gamma_j}\,dx+1\right)^{\frac{2^*}{2h}\cdot\frac{(1-\tau_j)\gamma_j}{\hat{\gamma}_j}}  \\
		& \UUU = \EEE \left(\left(\frac{C2^{6j}}{(t-s)^2}\right)^{\frac{2^*(1-\tau_j)\gamma_j}{2\tau_j\hat{\gamma}_j}}\left(\int_{B_s}\cU^{\gamma_{j-1}}\,dx\right)^{\frac{\gamma_j}{\gamma_{j-1}}}\right)^{\tau_j}\left(\int_{B_t}\cU^{\gamma_j}\,dx+1\right)^{M_j}
        \\
        &
        \le C_2\left(\int_{B_t}\cU^{\gamma_j}\,dx+1\right)+\left(1-C_1\right)\left(\frac{C2^{6j}c_a}{(t-s)^2}\right)^{\beta}\left(\int_{B_s}\cU^{\gamma_{j-1}}\,dx\right)^{\frac{\tau_j\gamma_j}{\gamma_{j-1}}\cdot\left(\frac{1}{M_j}\right)'} 
	\end{split}
	\end{equation}
	where $C=C(n,p,q,a)$ and $\beta\in(1,+\infty)$ is a suitable exponent dependent on $p,q,\alpha,n$.
	
	 \UUU Defining \EEE
	\begin{equation*}
		\UUU \kappa_{j} \EEE :=\tau_j\left(\frac{1}{M_j}\right)'-1
	\end{equation*}
	we \UUU rewrite~\eqref{e:2000} as \EEE
	\begin{equation*}
		\int_{B_s} \cU^{\gamma_j}\,dx\le C_2\int_{B_t}\cU^{\gamma_j}\,dx+\left(1-C_1\right)\left(\frac{C2^{6j}}{(t-s)^2}\right)^{\beta}\left(\int_{B_s}\cU^{\gamma_{j-1}}\,dx\right)^{\frac{\gamma_j}{\gamma_{j-1}}(1+ \UUU \kappa_j \EEE)}+ \UUU C_2. \EEE
	\end{equation*}
	 We now apply Lemma \ref{lem:est} with
	\begin{align*}
		& Z(y)=\int_{B_y}\cU^{\gamma_j}\,dx,\quad \UUU \cA = \GGG (1-C_1)\UUU \big( C2^{6j} \big)^{\beta}\left(\int_{B_{\rho}} \cU^{\gamma_{j-1}}\,dx\right)^{\frac{\gamma_j}{\gamma_{j-1}}(1+ \UUU \kappa_j \EEE)}\,,  \EEE 
        \\
        &
        \UUU \cB=0\,, \quad \mathcal{C} = C_{2}\,, \EEE  \quad\alpha_0=2\beta\,, \quad\theta=C_2\,.
	\end{align*}
	\UUU Hence, we obtain that, \EEE for all $j\ge 0,$ 
	\begin{equation}\label{eq:mos0}
		\int_{\UUU B_{r} \EEE } \cU^{\gamma_j}\,dx\le C\left(\left(\frac{2^{6j}}{ \UUU (\rho - r)^2 \EEE }\right)^{\beta}\left(\int_{\UUU B_{\rho} \EEE }\cU^{\gamma_{j-1}}\,dx\right)^{\frac{\gamma_j}{\gamma_{j-1}}(1+\UUU \kappa_{j} \EEE )}+1\right)
	\end{equation}
	with $C=C(n,p,q,a)>1$. 
    
    \GGG We now want to iterate the previous estimate on a sequence of shrinking balls. We fix two radii $0 < r < \rho\le 1$, then we consider the sequence
    $$
    R_j=r+\frac{\rho-r}{2^j}
    $$
and we apply \eqref{eq:mos0} with $R_{j+1}< R_j$ instead of $r < \rho$.  \UUU For later use, we define 
    \begin{equation}\label{e:lambdaj}
		\lambda_j:=\frac{2h}{2^*}\frac{\hat{\gamma}_j}{\gamma_j}=\frac{p+2 \UUU \sigma_{j} \EEE }{2q-p+2 \UUU \sigma_{j} \EEE } \quad \in (0, 1]
	\end{equation}
	and notice that we may rewrite
	\begin{equation}\label{e:kappaj}
	\begin{split}
		\UUU \kappa_{j} \EEE & =-1+\tau_j\cdot\left(\frac{2h}{2^*}\frac{1}{1-\tau_j}\frac{\hat{\gamma}_j}{\gamma_j}\right)'=-1+\tau_j\cdot\left(\frac{\UUU \lambda_{j} \EEE }{1-\tau_j}\right)'
        \\
        &
        =-1+\frac{\tau_j}{1-\frac{1-\tau_j}{\UUU \lambda_{j} \EEE }}=\frac{(1- \UUU \lambda_{j} \EEE )(1-\tau_j)}{ \UUU \lambda_{j} \EEE +\tau_j-1}\,. 
	\end{split}
	\end{equation}
	In particular, $\kappa_{j}$ is nonnegative for $j\ge 0$, as $\gamma_{j-1} < \gamma_{j} < \hat{\gamma}_{j}$ and $\frac{2h}{2^{*}} >1$.
	To simplify our notation, we denote
	\begin{equation*}
		Y_j:=\int_{B_{R_j}}\cU^{\gamma_{j-1}}\,dx
	\end{equation*}
	so that \UUU \eqref{eq:mos0} writes as \EEE
	\begin{equation*}
		Y_{j+1}\le C\left(2^{8\beta j}(\UUU \rho \EEE -r)^{-2\beta} \, Y_j^{\frac{\gamma_j}{\gamma_{j-1}}(1+\UUU \kappa_{j} \EEE ) } + 1 \right)\le\left(C2^{8\beta}(\UUU \rho \EEE -r)^{-2\beta}\right)^j\left(Y_j+1\right)^{\frac{\gamma_j}{\gamma_{j-1}}(1+ \UUU \kappa_{j} \EEE)}
	\end{equation*}
	since $C>1$ and $\rho\le 1$. Iterating this inequality gives
	\begin{align*}
		Y_{j+1}&\le\left(C2^{8\beta}( \UUU \rho \EEE -r)^{-2\beta}\right)^j\left(Y_j+1\right)^{\frac{\gamma_j}{\gamma_{j-1}}(1+ \UUU \kappa_{j} \EEE)}
        \\
		&
        \le \left(C2^{8\beta}( \UUU \rho \EEE -r)^{-2\beta}\right)^j\left(\left(C2^{8\beta}( \UUU \rho \EEE -r)^{-2\beta}\right)^{j-1}\left(Y_{j-1}+1\right)^{\frac{\gamma_{j-1}}{\gamma_{j-2}}(1+ \UUU \kappa_{j-1} \EEE ) } + 1 \right)^{\frac{\gamma_j}{\gamma_{j-1}}(1+\UUU \kappa_{j} \EEE )}\\
		&\le\left(C2^{8\beta}( \UUU \rho \EEE -r)^{-2\beta}\right)^j\left(2\left(C2^{8\beta}(R-r)^{-2\beta}\right)^{j-1}\left(Y_{j-1}+1\right)^{\frac{\gamma_{j-1}}{\gamma_{j-2}}(1+\UUU \kappa_{j-1} \EEE )}\right)^{\frac{\gamma_j}{\gamma_{j-1}}(1+ \UUU \kappa_{j} \EEE )}.
	\end{align*}
	Therefore, we infer that
	\begin{align}\label{e:YN+1}
		Y_{N+1}\le\left(C2^{8\beta+1}( \UUU \rho \EEE -r)^{-2\beta}\right)^{\sum_{j=1}^{N}\left(j\frac{\gamma_N}{\gamma_j} \UUU \prod_{k=j}^N(1+\kappa_k) \EEE\right)}\left(Y_1+1\right)^{\frac{\gamma_N}{\gamma_0}\prod_{j=1}^{N}(1+ \UUU \kappa_{j} \EEE)}. 
	\end{align}
	\UUU Denoting the constant~$C2^{8\beta+1}$ again \EEE with $C$ and setting 
         \begin{displaymath}
            \Theta_N:=\prod_{j=1}^N(1+\kappa_j)\,,
         \end{displaymath} 
         we take the power $1/\gamma_N$ on both sides of~\eqref{e:YN+1} \EEE
	\begin{align}\label{e:YN+1-2}
		Y_{N+1}^{\frac{1}{\gamma_N}}&\le\left(C( \UUU \rho \EEE -r)^{-2\beta}\right)^{\sum_{j=1}^{N}\left(\frac{j}{\gamma_j} \UUU \prod_{k=j}^N(1+\kappa_{k}) \EEE \right)}\left(Y_j+1\right)^{\frac{1}{\gamma_0}\prod_{j=1}^{N}(1+\UUU \kappa_{j} \EEE )}\\
		&\le\left(C(  \UUU \rho \EEE  -r)^{-2\beta}\right)^{\Theta_N\sum_{j=1}^{N}\left(\frac{j}{\gamma_j}\right)}\left(Y_1+1\right)^{\frac{\Theta_N}{\gamma_0}}\,. \nonumber
	\end{align}

	Let us now estimate \UUU $\kappa_j$ as $j\to \infty$. Recalling the definitions~\eqref{eq:gam}, \eqref{eq:gam-2}, \eqref{e:tauj}, \eqref{e:lambdaj}, and~\eqref{e:kappaj}, we \EEE have $\tau_j\to\tau_\infty\in(0,1)$ and
	\begin{equation*}
		1- \UUU \lambda_{j} \EEE =\frac{2q-2p}{2q-p+2 \UUU \sigma_{j} \EEE }\sim\frac{q-p}{2^{j+1}}
	\end{equation*}
	so that
	\begin{equation*}
		\UUU \kappa_{j} \EEE \sim\frac{1-\tau_\infty}{\tau_\infty}\cdot\frac{q-p}{2^{j+1}} \qquad \text{as $j\to\infty$.}
	\end{equation*}
	 Since
	\begin{equation*}
		\log\left(\prod_{j=1}^N(1+ \UUU \kappa_j \EEE)\right)=\sum_{j=1}^N\log(1+  \UUU \kappa_j \EEE )\le\sum_{j=1}^N  \UUU \kappa_j \EEE 
	\end{equation*}
	we have that
	\begin{equation*}
		\lim\limits_{N\to\infty}\Theta_N=\Theta_\infty< +\infty\,.
	\end{equation*}
	\UUU Using \EEE the fact that \UUU $\gamma_j\sim 2^{j+1}$ \EEE we infer that $\sum_{j=1}^\infty\frac{j}{\gamma_j}$ also converges. Therefore,        \UUU we infer from~\eqref{e:YN+1-2} that \EEE
	\begin{equation*}
		\|\cU\|_{L^\infty(B_r)}=\lim\limits_{N\to\infty}\left(\int_{B_r}\cU^{\gamma_N}\,dx\right)^{\frac{1}{\gamma_N}}=\lim\limits_{N\to\infty}Y_{N+1}^{\frac{1}{\gamma_N}}\le \frac{C}{( \UUU \rho \EEE -r)^{\beta'}}\left(\int_{B_{\UUU \rho \EEE }}\cU^{\gamma_0}+1\right)^{\frac{\Theta_\infty}{\gamma_0}}\,.
	\end{equation*}
	\UUU By \EEE definition of $\cU$ and of $\gamma_{0}$ we deduce that
        \begin{equation}
        \label{e:intermediate}
		\|\nabla u\|_{L^\infty(B_r)}\le \frac{C}{(\UUU \rho \EEE -r)^{\beta'}}\left(\int_{B_{\UUU \rho \EEE }}|\nabla u|^{h(2q-p+2)}+1\right)^{\frac{\Theta_\infty}{h(2q-p+2)}}
	\end{equation}
	for some $C,\beta'>0$ depending on $n,p,q,a$.
	
        Let us now improve inequality \eqref{e:intermediate}. \UUU To simplify the notation, we write $K= \frac{2^{*}}{2}$. \EEE Since
	\begin{equation*}
		\GGG p\EEE <h(2q-p+2)< \UUU K \EEE (p+2)\,,
	\end{equation*}
	using interpolation we can write for every $t\le R$
	\begin{equation}\label{eq:gwsbksdk}
		\left(\int_{B_t}|\nabla u|^{h(2q-p+2)}\,dx\right)^{\frac{1}{h(2q-p+2)}}\le\left(\int_{B_t}|\nabla u|^{\GGG p}\,dx\right)^{\frac{\zeta}{\GGG p}}\left(\int_{B_t}|\nabla u|^{\UUU K \EEE (p+2)}\,dx\right)^{\frac{1-\zeta}{\UUU K \EEE (p+2)}}
	\end{equation}
	with the choice
	\begin{equation*}
		\frac{1}{h(2q-p+2)}=\frac{\zeta}{\GGG p}+\frac{1-\zeta}{ \UUU K \EEE (p+2)}\, .
	\end{equation*}
	\GGG We take $\rho=\frac{r+R}{2}$ and $\rho\le s<t\le R$ and 
	 \UUU we rewrite~\eqref{eq:hadh} \EEE with $\sigma= \UUU \sigma_{0} = \EEE 1$ (i.e. $j=0$). 
	 Hence we have
	\begin{equation}\label{e:3000}
		\left(\int_{B_s} |\nabla u|^{\UUU K \EEE (p+2)}\,dx\right)^{\frac{1}{\UUU K \EEE}}\le \frac{C}{(t - s )^2}\left(\int_{B_t}|\nabla u|^{h(2q-p+2)}\,dx+1\right)^{\frac{1}{h}}.
	\end{equation}
	\UUU Plugging~\eqref{e:3000} in \EEE \eqref{eq:gwsbksdk} gives
	\begin{align}\label{e:30001}
		& \left(\int_{B_s}|\nabla u|^{h(2q-p+2)}\,dx\right)^{\frac{1}{h(2q-p+2)}}
        \\
        &
        \qquad \le\frac{C}{(t - s)^{\UUU \upsilon \EEE}}\left(\int_{B_t}|\nabla u|^{\GGG p}\,dx\right)^{\frac{\zeta}{\GGG p}}\left(\left(\int_{B_t}|\nabla u|^{h(2q-p+2)}\,dx\right)^{\frac{1-\zeta}{h(p+2)}}+1\right), \nonumber
	\end{align}
        \UUU where we have set $\upsilon := \frac{2(1 - \zeta)}{p+2}$. \EEE We claim that
	\begin{equation}\label{e:M<1}
		M:=(1-\zeta)\frac{2q-p+2}{p+2}<1.
	\end{equation}
	\UUU In order to prove~\eqref{e:M<1}, let \EEE us denote
	\begin{equation*}
		A:=2q-p+2,\quad B:={\GGG p},\quad \UUU D \EEE :=p+2,\quad \UUU 
	\end{equation*}
	so that
	\begin{equation*}
		\zeta=\frac{\displaystyle\frac{1}{hA}-\frac{1}{\UUU KD \EEE}}{\displaystyle\frac{1}{B}-\frac{1}{\UUU KD \EEE}}
	\end{equation*}
	and
	\begin{equation*}
		M=(1-\zeta)\frac{A}{\UUU D \EEE}=\frac{\frac{1}{B}-\frac{1}{hA}}{\displaystyle\frac{1}{B}-\frac{1}{\UUU KD \EEE}}\cdot\frac{A}{\UUU D \EEE}=\frac{\displaystyle\frac{A}{B}-\frac{1}{h}}{\displaystyle\frac{\UUU D \EEE}{B}-\frac{1}{\UUU K \EEE}}=\frac{\UUU K \EEE}{h}\cdot\frac{hA-B}{\UUU KD \EEE -B}\,.
	\end{equation*}
	\UUU Hence, the condition \EEE $M<1$ is equivalent to
	\begin{equation*}
		\UUU K \EEE(hA-B)<h(\UUU KD \EEE -B) \ \Longleftrightarrow  \ \UUU K \EEE (A- \UUU D \EEE)<\left(\frac{\UUU K \EEE }{h}-1\right)B.
	\end{equation*}
	\UUU Writing the above equivalence \EEE explicitly, we get
	\begin{equation*}
		\frac{n}{n-2}(2q-2p)<\frac{2\alpha}{n-2}{\GGG p} \ \Longleftrightarrow \ n(q-p)<\alpha{\GGG p} \ \Longleftrightarrow \ \frac{q}{p}<1+{\GGG \frac{\alpha}{n}},
	\end{equation*}
	\GGG which is true by assumption, so that \eqref{e:M<1} holds. Using this and applying Young inequality to~\eqref{e:30001} we have:
	\begin{equation}\label{e:improve}
	\begin{split}
		\int_{B_s}|\nabla u|^{h(2q-p+2)}\,dx\le & \ (1-\zeta)\frac{2q-p+2}{p+2}\left(\int_{\UUU B_t \EEE}|\nabla u|^{h(2q-p+2)}\,dx\right)\\
		&+\frac{C}{(\UUU t - s \EEE )^{\UUU \upsilon \omega \EEE}}\left(\int_{ \UUU B_R \EEE}|\nabla u|^{\GGG p}\,dx\right)^{\frac{\zeta\UUU h (2q - p + 2) \omega \EEE}{\GGG p}} \\
		&+\frac{C}{( \UUU t - s \EEE )^\upsilon}\left(\int_{ \UUU B_R \EEE}|\nabla u|^{\GGG p}\,dx\right)^{\frac{\zeta h(2q-p+2)}{\GGG p}} 
	\end{split}
	\end{equation}
	with $\omega =\left(\frac{p+2}{(1-\zeta)(2q-p+2)}\right)'=\frac{1}{1-M}$. Lemma \ref{lem:est}, \eqref{e:improve}, and~\eqref{e:intermediate}  finally yield \EEE
	\begin{equation*}
		\|\nabla u\|_{L^\infty(B_r)}\le \frac{C}{(R-r)^{\mu}}\left(\int_{B_R}|\nabla u|^{p}+1\right)^{\frac{\Theta}{p}}
	\end{equation*}
	for some $C,\mu>0$, and $\Theta\ge 1$ depending on $n,p,q,a$. This concludes the proof of~\eqref{eq:lip}.
\end{proof}
\subsection{Main proof}
We now proceed to prove the main theorem.
\begin{proof}[Proof of Theorem \ref{thm:main}]
	Proposition \ref{prop:lip} gives
	\begin{equation}\label{eq:mainest}
		\|\nabla u^\varepsilon\|_{L^\infty(B_{R/3})}\le C \left(\int_{B_R}|\nabla u^\varepsilon|^p+1\right)^{\Theta/p}
	\end{equation}
	for every ball $B_R$ with radius $R\le1$, where $C$ depends on $n,p,q,a,R$. Thanks to Proposition~\ref{prop:approx} $u^\varepsilon\to U$ strongly in $W^{1,p}(B_R)$ and thus, passing to the limit in \eqref{eq:mainest}, we get 
	\begin{equation*}
		\|\nabla u\|_{L^\infty(B_{R/3})}\le C \left(\int_{B_R}|\nabla u|^p+1\right)^{\Theta/p}\,.
	\end{equation*}
    This concludes the proof of the theorem.
\end{proof}

\subsection*{Acknowledgements}

The work of the authors was funded by the University of Naples Fe\-derico II through FRA Project ``ReSinApas". S.A. was also supported by the FWF Austrian Science Fund through the Project 10.55776/P35359, by the PRIN 2022 Project 2022HKBF5C ``Variational Analysis of Complex Systems in Materials Science, Physics and Biology'', and by Gruppo Nazionale per l'Analisi Matematica, la Probabilit\`a e le loro Applicazioni (GNAMPA-INdAM, Project 2025: DISCOVERIES - Difetti e Interfacce in Sistemi Continui: un'Ottica Variazionale in Elasticit\`a con Risultati Innovativi ed Efficaci Sviluppi). The research of C.L. was supported by the PRIN 2022 Project 2022E9CF89 “Geometric Evolution Problems and Shape Optimizations”. The authors are members of Gruppo Nazionale per l'Analisi Matematica, la Probabilit\`a e le loro Applicazioni (GNAMPA-INdAM).

\bibliographystyle{siam}
\bibliography{bibODM_ALM}
\end{document}